\DeclareMathOperator{\Var}{Var}
\DeclareMathOperator{\Cov}{Cov}
\DeclareMathOperator{\ind}{\text{\sc{ind}}}
\DeclareMathOperator{\indL}{\underline{\ind}}
\DeclareMathOperator{\indH}{\overline{\ind}}
\DeclareMathOperator{\sinc}{sinc}
\DeclareMathOperator{\F}{\mathscr{F}}
\newcommand{\<}{\langle}
\renewcommand{\>}{\rangle}
\newcommand{\N}{\mathbb{N}}
\newcommand{\Z}{\mathbb{Z}}
\newcommand{\R}{\mathbb{R}}
\newcommand{\C}{\mathbb{C}}
\newcommand{\cS}{\mathcal{S}}
\newcommand{\rL}{\mathcal{L}}
\newcommand{\rf}{F}
\newcommand{\ba}{\mathbf{a}}
\newcommand{\bA}{\mathbf{A}}
\newcommand{\bQ}{\mathbf{Q}}
\newcommand{\cI}{\mathcal{I}}
\newcommand{\cK}{\mathcal{K}}
\renewcommand{\P}{\mathrm{P}}
\newcommand{\E}{\mathrm{E}}
\newcommand{\1}{\mathbb{1}}
\renewcommand{\d}{{\rm d}}
\renewcommand{\Re}{\mathrm{Re}}
\renewcommand{\Im}{\mathrm{Im}}
\newcommand{\e}{{\rm e}}
\renewcommand{\leq}{\leqslant}
\renewcommand{\ge}{\geqslant}
\renewcommand{\le}{\leqslant}
\author{Davar Khoshnevisan\\University of Utah
	\and Marta Sanz-Sol\'e\\Universitat de Barcelona}
\title{\bf Optimal regularity of SPDEs with additive noise\thanks{Research 
supported in part by the United States' National Science Foundation grant  DMS-1855439, and by the Spanish {\em Ministerio de Ciencia e Innovaci\'on} PID2020-118339GB-I00
}}
\newtheorem{stat}{Statement}[section]
\newtheorem{proposition}[stat]{Proposition}
\newtheorem{theorem}[stat]{Theorem}
\newtheorem{lemma}[stat]{Lemma}
\theoremstyle{definition}\newtheorem{definition}[stat]{Definition}
\newtheorem{remark}[stat]{Remark}
\newtheorem{example}[stat]{Example}
\numberwithin{equation}{section}
\begin{document}
\maketitle
\begin{abstract} 
	The sample-function regularity of the random-field solution to a stochastic 
	partial differential equation (SPDE) depends naturally on the roughness of
	the external noise, as well as on the properties of the underlying integro-differential operator 
	that is used to define the equation. In this paper, we consider parabolic and hyperbolic SPDEs 
	on $(0\,,\infty)\times\R^d$ of the form 
	\[
		\partial_t u = \rL u + g(u) + \dot{\rf}
		\qquad\text{and}\qquad
		\partial^2_t u = \rL u + c + \dot{\rf},
	\]
	with suitable initial data,  forced with a
	space-time homogeneous Gaussian noise $\dot{\rf}$
	that is white in its time variable and correlated in its space variable, and 
	driven by the generator  $\rL$ of a genuinely $d$-dimensional L\'evy process $X$.
	We find optimal H\"older  conditions for the respective random-field solutions to these
	SPDEs. Our conditions are stated in terms of indices that describe thresholds on the 
	 integrability of some functionals of the characteristic exponent of the process $X$
	 with respect to the spectral measure of the spatial covariance of $\dot\rf$. 
	 Those indices are suggested by references \cite{SanzSoleSarra2000,SanzSoleSarra2002} 
	 on the particular case that $\rL$ is the Laplace operator on $\R^d$.
	\\[.1in]
\end{abstract}

\noindent{\it Key words and phrases.} Stochastic partial differential equation, 
Gaussian noise, L\'evy process, characteristic exponent, optimal H\"older regularity.\\

 \noindent{\it 2010 Mathematics Subject Classification.}
	Primary: 60H15, 60G51, 60G60;  Secondary: 35R60, 35E05.

\section{Introduction}
\label{s1}

We consider a semilinear stochastic partial differential equation (SPDE) of the following parabolic type,
\begin{equation}\left[\label{SHE}\begin{split}
	&\partial_t u = \rL u + g(u) + \dot{\rf}
		\quad\text{on $(0\,,\infty)\times\R^d$},\\
	&\text{subject to}\quad u(0)=u_0\quad\text{ on $\R^d$},
\end{split}\right.\end{equation}
as well as a hyperbolic stochastic PDE of the form,	
\begin{equation}\left[\label{SWE}\begin{split}
	&\partial^2_t u = \rL u + c + \dot{\rf}
		\quad\text{on $(0\,,\infty)\times\R^d$},\\
	&\text{subject to}\quad u(0)=u_0
		\quad\text{and}\quad 
		\left.\partial_tu(t) \right|_{t=0} = v_0\quad\text{ on $\R^d$},
\end{split}\right.\end{equation}
where  $\rL$ denotes the generator of a genuinely $d$-dimensional
L\'evy process (see \eqref{gen}),
$g:\R\to\R$, $c\in \R$, and
$u_0:\R^d\to\R$ and $v_0:(0\,,\infty)\times\R^d\to\R$ are deterministic functions.
The quantity $\dot{\rf}$ is a centered space-time Gaussian noise whose covariance is, somewhat
informally, defined as
\begin{equation}\label{Cov}
	\Cov[\dot{\rf}(t_1\,,x) \,,\dot{\rf}(t_2\,,y)] = \delta_0(t_1-t_2)\Gamma(x-y)
	\quad\text{for every $t_1,t_2\ge 0$ and $x,y\in\R^d$,}
\end{equation}
where $\Gamma$ is a tempered, nonnegative-definite Borel measure on $\R^d$.
We can understand $\dot{\rf}$ more carefully via its action on a
rapidly decreasing test function $\phi\in\cS(\R^{1+d})$ as follows: Let
\[
	\dot{\rf}(t\,,\phi) = \int_{(0,t)\times\R^d}\phi(s\,,x)\, \rf(\d s,\d x)
	\qquad[t>0],
\]
where the stochastic integral is a Wiener integral normalized to ensure that
\[
	\Cov[ \dot{\rf}(t_1\,,\phi_1)\,,\dot{\rf}(t_2\,,\phi_2)] = \int_0^{t_1\wedge t_2}\d s\int_{\R^d}
	\Gamma(\d x)\left(\phi_1(s) * \tilde\phi_2(s)\right)
	\hskip.5in\text{for every $\phi_1,\phi_2\in\cS(\R^d)$.}
\]
In this formula, the symbol ``$*$'' denotes the convolution operator in the spatial variable, and
$\tilde\phi(t\,,x) = \phi(t\,,-x)$ defines the reflection of $\phi:\R_+\times\R^d\to\R$ in 
its space variable.

Let   ``$\hat{\phantom{a}}\,$''  denote the  Fourier transform 
on $\R^d$, normalized so that 
\begin{equation}\label{norm-fourier}
	\hat{f}(\xi) = \int_{\R^d}\e^{ix\cdot\xi} f(x)\,\d x
	\qquad\text{for all $f\in L^1(\R^d)$ and $\xi\in\R^d$}.
\end{equation}
We can then let $\mu=\hat{\Gamma}$ and $\psi=-\hat{\rL}$
respectively denote the spectral measure for the spatial aspect
of the noise and the characteristic exponent of the underlying
L\'evy process (see \S\ref{sec:Levy}).
Assume that $g:\R\rightarrow \R$ is Lipschitz continuous.  An extension of the theory of Dalang \cite{Dalang} (see also Brz\'ezniak and van 
Neerven \cite{BvN} and Khoshnevisan and Kim \cite{KK})
ensures that for every $d\ge 1$ (respectively, $d\in\{1,2,3\}$) the SPDE \eqref{SHE} (respectively, \eqref{SWE}) has a random-field solution if
\begin{equation}\label{Dalang}
	 \int_{\R^d}\frac{\mu(\d\xi)}{1+\Re\psi(\xi)}<\infty.
\end{equation}
Moreover, this condition is optimal in the sense that it is
necessary as well as sufficient when $g$ is a constant function. 
Therefore, throughout this paper we assume that Dalang's condition
\eqref{Dalang} holds in order to ensure that \eqref{SHE} and \eqref{SWE} are well posed.

The principal aim of this paper is to establish optimal H\"older regularity conditions
for the random-field solution to \eqref{SHE} and \eqref{SWE}, respectively. 
For the stochastic heat equation \eqref{SHE}, we 
restrict attention to  random initial data $u_0$ that is 
independent of the noise $\dot{F}$ and satisfies moment-type conditions;
see Proposition \ref{pr:heat} for details. In the case of the wave equation \eqref{SWE}, 
we will consider null initial data, although in spatial dimension $d\in\{1,2,3\}$ it would be possible to consider non-null smooth initial data as well \cite{DalangSanzSole-2,MilletSanzSole}.
The next two theorems contain the main findings of this paper.

\begin{theorem}\label{th:heat}
	Let $\{u(t\,,x);\ (t\,,x)\in \R_+\times \R^d\}$ denote the random-field solution to 
	the parabolic SPDE $\eqref{SHE}$. 
	\begin{compactenum}[\rm (a)]
		\item For every $x\in\R^d$, the random function $t\mapsto u(t\,,x)$
			is a.s.\ locally H\"older continuous provided that
			\begin{equation}\label{cond:H}
				\int_{\R^d} \frac{|\psi(\xi)|^a\,\mu(\d \xi)}{1+\Re\psi(\xi)}<\infty
				\qquad\text{for some $a\in(0\,,1)$}.
			\end{equation}
		\item If $g$ is a constant and \eqref{cond:H} fails, then 
			$t\mapsto u(t\,,x)$ is a.s.\ not locally H\"older continuous for every  $x\in\R^d$.
		\item The function $(t\,,x)\mapsto u(t\,,x)$ is a.s.\ locally H\"older continuous provided that
			\begin{equation}\label{cond:R}
				\int_{\R^d}\frac{\|\xi\|^{2b}\,\mu(\d \xi)}{1+\Re\psi(\xi)}<\infty
				\qquad\text{for some $b\in(0\,,1)$}.
			\end{equation}
		\item If $g$ is a constant and \eqref{cond:R} fails, then
			$x\mapsto u(t\,,x)$ is a.s.\ not locally H\"older continuous  for every $t>0$.
	\end{compactenum}
\end{theorem}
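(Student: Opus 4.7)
The plan is to reduce Theorem \ref{th:heat} to regularity of the Gaussian stochastic convolution
\[
v(t,x) = \int_0^t\int_{\R^d} G(t-s,x-y)\,\rf(\d s,\d y),
\]
where $G$ is the fundamental solution of $\partial_t-\rL$, with $\hat G(t,\xi) = e^{-t\psi(\xi)}$. The mild form of \eqref{SHE} displays $u$ as the sum of $v$, the initial-data term $(G(t,\cdot)*u_0)(x)$, and a drift integral $\int_0^t\int G(t-s,x-y)g(u(s,y))\,\d y\,\d s$. Under the Lipschitz assumption on $g$, a standard Picard argument (as in \cite{Dalang}) shows the drift has H\"older regularity at least as good as $v$, and the initial-data term inherits regularity from $u_0$; when $g\equiv c$ (parts (b) and (d)), the drift is simply $ct$, so the H\"older regularity of $u$ coincides with that of $v$.

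For the positive statements (a) and (c), I would compute, via the Wiener isometry and Plancherel, the Gaussian $L^2$-norms of increments. Splitting $v(t+h,x)-v(t,x)$ into a ``past'' and a ``fresh-noise'' piece, which are independent,
\[
\E\!\left[|v(t+h,x)-v(t,x)|^2\right] = \int_{\R^d}|e^{-h\psi(\xi)}-1|^2\,\frac{1-e^{-2t\Re\psi(\xi)}}{2\Re\psi(\xi)}\,\mu(\d\xi) + \int_{\R^d}\frac{1-e^{-2h\Re\psi(\xi)}}{2\Re\psi(\xi)}\,\mu(\d\xi),
\]
while in space
\[
\E\!\left[|v(t,x+y)-v(t,x)|^2\right] = \int_{\R^d}|e^{iy\cdot\xi}-1|^2\,\frac{1-e^{-2t\Re\psi(\xi)}}{2\Re\psi(\xi)}\,\mu(\d\xi).
\]
The elementary inequalities $|e^{-h\psi}-1|^2 \le 4^{1-a/2}(h|\psi|)^a$, $(1-e^{-2h\Re\psi})/(2\Re\psi)\le C h^a/(1+\Re\psi)^{1-a}$ (for $h\le 1$), and $|e^{iy\cdot\xi}-1|^2 \le 4^{1-b}\|y\|^{2b}\|\xi\|^{2b}$, together with $(1+\Re\psi)^{a-1}\le (1+|\psi|^a)/(1+\Re\psi)$ (from the subadditivity $(1+x)^a\le 1+x^a$ and $\Re\psi\le|\psi|$), bound the time-increment variance by $Ch^a$ under \eqref{cond:H} and the space-increment variance by $C\|y\|^{2b}$ under \eqref{cond:R}. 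Gaussianity upgrades these $L^2$ bounds to $L^p$ bounds for every $p\ge 2$, and Kolmogorov's continuity criterion then yields local H\"older continuity of any order strictly less than $a/2$ in time, respectively $b$ in space.

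For the converses (b) and (d) we take $g\equiv c$ and work directly with $v$. Assuming $t\mapsto v(t,x)$ is a.s.\ locally H\"older of some positive order $\alpha$ on a compact interval, Fernique's theorem applied to the random H\"older seminorm (equivalently, the Garsia--Rodemich--Rumsey lemma combined with Gaussian tail bounds) upgrades this to $\E[|v(t+h,x)-v(t,x)|^2]\le Ch^{2\alpha}$ uniformly in a compact range of $(t,h)$. Applied to the fresh-noise piece alone, this gives $\int (1-e^{-2h\Re\psi})/(2\Re\psi)\,\mu(\d\xi)\le Ch^{2\alpha}$, and, via $(1-e^{-2h\Re\psi})/(2\Re\psi)\ge c\min(h,1/(2\Re\psi))$, yields the frequency-shell estimate $\mu(\{\Re\psi\le 1/h\})\le Ch^{2\alpha-1}$. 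The main obstacle is then upgrading this $\Re\psi$-type control to the full $|\psi|^a$-type condition \eqref{cond:H}; for this one must also exploit the past-noise term, in which the identity $|e^{-h\psi}-1|^2 = 1-2e^{-h\Re\psi}\cos(h\Im\psi)+e^{-2h\Re\psi}$ exposes $\Im\psi$ through the oscillations of $\cos(h\Im\psi)$, so a dyadic frequency decomposition together with a judicious sequence $h_n\downarrow 0$ extracts integrability of $|\psi|^a/(1+\Re\psi)$ against $\mu$ for $a=2\alpha$ (up to a loss in the exponent). Part (d) is the spatial analogue: local H\"older continuity of $x\mapsto v(t,x)$ of order $b'>0$ gives $\int|e^{iy\cdot\xi}-1|^2\,\mu(\d\xi)/(1+\Re\psi)\le C\|y\|^{2b'}$ uniformly in small $y$, from which \eqref{cond:R} is recovered by a similar dyadic extraction in $\xi$.
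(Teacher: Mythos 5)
Your treatment of the positive parts (a) and (c) is correct and is a genuinely more elementary route than the paper's. You compute the second moments of the space/time increments of the Gaussian convolution directly (splitting the time increment into independent past-noise and fresh-noise pieces), control them by $C h^a$ and $C\|y\|^{2b}$ via the interpolation bounds $|e^{-h\psi}-1|^2\lesssim (h|\psi|)^a$ and $(1-e^{-2h\Re\psi})/(2\Re\psi)\lesssim h^a(1+\Re\psi)^{a-1}$, and invoke Gaussianity plus Kolmogorov. The paper instead works through the abstract indices $\cI_R,\cI_H,\underline{\cI}_H,\overline{\cI}_H$ of Section~\ref{sec:conv} and the fractional-calculus Lemma~\ref{lem:index}, then evaluates those indices in Lemma~\ref{lem:ind:H}. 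Your version is shorter for the heat equation alone; the paper's abstraction pays off because the same propositions also serve the wave equation and the converse direction. The reduction of the nonlinear drift to the linear Gaussian problem via Gronwall is also in line with the paper's Section~\ref{s6}, and using Fernique/GRR in place of the paper's Paley--Zygmund plus Bonferroni argument to pass from a.s.\ H\"older continuity to a uniform second-moment bound is a perfectly acceptable alternative.

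The gap is in parts (b) and (d), and it is precisely at the step you flag as ``the main obstacle.'' From the uniform bound $\E|v(t+h,x)-v(t,x)|^2\le Ch^{2\alpha}$ you correctly extract the fresh-noise estimate and the shell bound on $\mu(\{\Re\psi\le 1/h\})$, but converting this plus the past-noise identity into the $|\psi|^a$-integrability of \eqref{cond:H} cannot be done with ``a judicious sequence $h_n\downarrow 0$.'' The obstruction is cancellation: for any fixed $h$, the factor $1-\cos(h\,\Im\psi(\xi))$ vanishes on an entire hypersurface of $\xi$-values, and by tuning the support of $\mu$ one can make every term along a prescribed sequence small while \eqref{cond:H} still fails. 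What is required is an \emph{averaging} in $h$ rather than a sampling: one integrates the increment variance against the Mellin weight $\d\varepsilon/\varepsilon^{1+a}$, then uses the change of variables $s=\varepsilon\,|\Im\psi(\xi)|$, which turns the integral of $1-\cos(\varepsilon\,|\Im\psi(\xi)|)$ into a $\xi$-independent constant times $|\Im\psi(\xi)|^a$. That Mellin integral is exactly the object in the paper's Lemma~\ref{lem:index}, and the change-of-variables computation is the proof of $\overline{\cI}_H(T)\le\indH$ in Lemma~\ref{lem:ind:H} (display \eqref{goal:H} and after). One also needs to partition frequency space as in that proof --- roughly $\{|\Im\psi|\ge\Re\psi\ge1\}$ versus its complement --- since the factor $e^{-\varepsilon\Re\psi}$ suppresses the oscillatory term when $\Re\psi$ is large, and there the term $(1-e^{-\varepsilon\Re\psi})^2$ must take over. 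Your sketch neither carries out the averaging nor the partition, so as written parts (b) and (d) are not established; the same remarks apply to your spatial ``dyadic extraction'' for (d), although there the oscillations come from $|e^{iy\cdot\xi}-1|^2=2(1-\cos(y\cdot\xi))$ and the analogous averaging over $y$ (the identity \eqref{cos} in Proposition~\ref{pr:phi*f:x}) is what makes it work.
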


When $g$ is constant and the initial condition $u_0$ vanishes,
 assertions (a) and (c) of Theorem \ref{th:heat} follow respectively  from \eqref{H:x} and \eqref{H:t} 
in Theorem \ref{th:Holder:H}. In the constant-$g$ case, parts (b) and (d) 
of Theorem \ref{th:heat} follow from assertion 2. of Theorem \ref{th:Holder:H}. 
Theorem \ref{th:heat-nonlinear} below yields  (a) and (c) in the more general 
case that $g$ is non constant and the initial condition $u_0$ satisfies the hypotheses 
of Proposition \ref{pr:heat}. We mention also 
that Theorem \ref{th:Holder:H} and Theorem \ref{th:heat-nonlinear} provide bounds 
on the optimal H\"older indices of the processes in question.

The preceding is a presentation of optimal H\"older regularity for parabolic SPDEs.
The following is a hyperbolic counterpart of these results.

\begin{theorem}\label{th:wave}
	Let $\{u(t\,,x);\ (t\,,x)\in \R_+\times \R^d\}$ 
	denote the random-field solution to $\eqref{SWE}$, and 
	assume that $g$ is a constant function and the initial functions $u(0)$ and $\partial_t u(0+\,,\cdot)$
	both vanish. Then,
	\begin{compactenum}[\rm (i)]
		\item If \eqref{cond:H} holds then for, every $x\in\R^d$, the random function $t\mapsto u(t\,,x)$
			is a.s.\ locally H\"older continuous.
		\item If condition \eqref{cond:H} fails then for every $x\in\R^d$, 
			$t\mapsto u(t\,,x)$ is a.s.\ not locally H\"older continuous.
		\item The function $(t\,,x)\mapsto u(t\,,x)$ is a.s.\ locally H\"older continuous provided that \eqref{cond:R} holds.
		\item If condition \eqref{cond:R} fails, then for every $t>0$,
			$x\mapsto u(t\,,x)$ is a.s.\ not locally H\"older continuous.
	\end{compactenum}
\end{theorem}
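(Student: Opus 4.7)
Since $g$ is constant and the initial data vanish, the deterministic drift from $c$ can be subtracted and the stochastic part of $u$ is a centered Gaussian random field admitting the mild representation
\[
    u(t,x) = \int_{(0,t)\times\R^d} G(t-s\,,x-y)\, \rf(\d s,\d y),
\]
where $G$ is the fundamental solution of $\partial_t^2 G = \rL G$, characterized spectrally by $\hat G(t,\xi) = \sin(t\sqrt{\psi(\xi)})/\sqrt{\psi(\xi)}$ (an entire function of $\psi$). The elementary estimate $|\hat G(t,\xi)|^2 \leq C(1+t^2)/(1+\Re\psi(\xi))$ combined with Dalang's condition \eqref{Dalang} makes the stochastic integral well-defined with finite second moment.

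Because $u$ is Gaussian, local H\"older regularity (and its failure) is controlled by the asymptotics of the incremental variances, which by Plancherel equal
\begin{align*}
    \E\bigl[(u(t,x)-u(s,x))^2\bigr] &= \int_0^\infty \d r\int_{\R^d}\bigl|\hat G(t-r,\xi)\1_{\{r<t\}} - \hat G(s-r,\xi)\1_{\{r<s\}}\bigr|^2\mu(\d\xi),\\
    \E\bigl[(u(t,x)-u(t,y))^2\bigr] &= 2\int_0^t\d r\int_{\R^d}|\hat G(t-r,\xi)|^2\bigl(1-\cos((x-y)\cdot\xi)\bigr)\mu(\d\xi).
\end{align*}
For the sufficient parts (i) and (iii), I would apply the Kolmogorov continuity criterion for Gaussian fields. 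For (iii), the estimate $1-\cos((x-y)\cdot\xi)\leq 2\|\xi\|^{2b}|x-y|^{2b}$ combined with the pointwise bound on $|\hat G|^2$ produces an upper bound of order $C_T|x-y|^{2b}$ times the integral in \eqref{cond:R}. For (i), I would interpolate between the trivial estimate $|\hat G(t,\xi)-\hat G(s,\xi)|\leq 2|\hat G|_{L^\infty_t(\xi)}\leq C(1+\Re\psi(\xi))^{-1/2}$ and a derivative bound that brings down a factor of $|\psi(\xi)|^{1/2}|t-s|$ from $\partial_t\hat G(t,\xi)=\cos(t\sqrt{\psi(\xi)})$, yielding a squared increment bounded by $C|\psi(\xi)|^a|t-s|^{2a}/(1+\Re\psi(\xi))$, whose integral against $\mu$ is precisely the quantity appearing in \eqref{cond:H}.

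For the necessity parts (ii) and (iv), I would use the standard Gaussian dichotomy: a centered Gaussian process can be locally H\"older continuous of some positive order only if its incremental variance $\sigma^2(h)$ is polynomially small. Matching \emph{lower} bounds on the two incremental variances above would be produced by restricting the Fourier integrals to favourable regions of $\xi$ where $|\sin((t-r)\sqrt{\psi(\xi)})|^2$ and $1-\cos((x-y)\cdot\xi)$ are bounded below by constants. Failure of \eqref{cond:H} for every $a\in(0,1)$ (respectively of \eqref{cond:R} for every $b\in(0,1)$) would then force the relevant incremental variance to decay strictly slower than any positive power, precluding H\"older continuity on an event of full measure.

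The main obstacle is the non-smoothing nature of the wave propagator: unlike the heat kernel $\e^{-t\psi}$, the wave kernel $\sin(t\sqrt{\psi})/\sqrt{\psi}$ provides no exponential decay in $\xi$, so Dalang's condition is the only source of integrability at large frequencies, and the time-interpolation must exploit only the single power of $|\psi|^{1/2}$ delivered by one derivative. A related technicality is that the lower bounds in the necessity argument need the restricting Fourier region to be chosen with care—annuli on which $\|\xi\|\cdot|x-y|\asymp 1$ for spatial increments and frequencies where $(t-r)\sqrt{|\psi(\xi)|}\asymp 1$ for temporal increments—so that the resulting bounds match the upper ones up to universal constants. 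Once these Fourier-analytic estimates are secured, the argument parallels the parabolic proof and so yields the same integrability indices \eqref{cond:H} and \eqref{cond:R} as in Theorem~\ref{th:heat}.
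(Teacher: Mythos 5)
Your sufficiency argument (parts (i) and (iii)) is essentially the route the paper takes through Lemma~\ref{lem:ind:W} and Propositions~\ref{pr:phi*f:x}--\ref{pr:phi*f:t}: reduce to second-moment estimates, then use Gaussian hypercontractivity and Kolmogorov. Your interpolation between the $L^\infty_t$ bound $|\hat G(t,\xi)|\lesssim(1+\psi(\xi))^{-1/2}$ and the Lipschitz bound from $\partial_t\hat G=\cos(t\sqrt{\psi})$ reproduces the estimate $|\hat G(t,\xi)-\hat G(s,\xi)|^2\lesssim|t-s|^{2a}(1+\psi(\xi))^{a-1}$, and $1-\cos\theta\lesssim|\theta|^{2b}$ handles the spatial increments. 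Two small omissions: the representation $\hat G(t,\xi)=\sin(t\sqrt{\psi(\xi)})/\sqrt{\psi(\xi)}$ presupposes that $\psi$ is real-valued, i.e.\ that $X$ is symmetric, which Section~\ref{s5} assumes explicitly along with \eqref{RL}; and the boundary contribution $\int_s^t\d r\int_{\R^d}|\hat G(t-r,\xi)|^2\,\mu(\d\xi)$ must be treated separately, since it is what caps the time-exponent at $\tfrac12$ in \eqref{Holder:W-1}.

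The necessity parts (ii) and (iv) have a genuine gap. You propose pointwise lower bounds on the incremental variances by restricting the spectral integrals to ``favourable'' regions such as annuli $\{\|\xi\|\asymp 1/\|h\|\}$ for $\sigma^2(h):=\E[|u(t,x+h)-u(t,x)|^2]$. But $\sigma^2(h)\asymp\int(1-\cos(h\cdot\xi))(1+\psi(\xi))^{-1}\,\mu(\d\xi)$, and on such an annulus $1-\cos(h\cdot\xi)$ is \emph{not} bounded below: it vanishes on the hyperplane $\{\xi\perp h\}$, and $\mu$ can concentrate near (indeed, on) that hyperplane, in which case the annular mass gives no lower bound on $\sigma^2(h)$. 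The same obstruction occurs for the time increment, where $\sin^2\bigl(\varepsilon\sqrt{\psi(\xi)}/2\bigr)$ vanishes on $\{\varepsilon\sqrt{\psi(\xi)}\in 2\pi\Z\}$ for each fixed $\varepsilon$. The device you are missing is that the paper does not lower-bound $\sigma^2(h)$ pointwise at all; it \emph{averages over the increment}. Proposition~\ref{pr:phi*f:x} integrates $\sigma^2(h)$ against the Riesz kernel $\|h\|^{-d-a}$ and exploits the identity \eqref{cos}, $\int_{\R^d}(1-\cos(\xi\cdot h))\|h\|^{-d-a}\,\d h=c\|\xi\|^a$, to equate the average with a constant multiple of $\int\|\xi\|^a(1+\psi(\xi))^{-1}\,\mu(\d\xi)$; divergence of the latter then forces $\limsup_{h\to0}\|h\|^{-b}\sigma^2(h)=\infty$ for every $b>a$ by a dyadic argument, with no need to pick out frequencies. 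Lemma~\ref{lem:ind:W} (Stage~3) together with Proposition~\ref{pr:phi*f:t} does the analogue in time, averaging the $I_2$-contribution against $\varepsilon^{-1-b}\,\d\varepsilon$ to compute $\overline{\cI}_H(T)=2\indH$. Finally, even granting a negative second-moment statement, the passage to almost-sure failure of H\"older continuity is not automatic ``Gaussian dichotomy'': the paper makes it precise via the Paley--Zygmund inequality (Lemma~\ref{lem:PZ}) together with Kallianpur's zero--one law \eqref{0-1:law}, a step your outline skips.
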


These results are proved in Theorem \ref{th:Holder:W}, where  optimal H\"older indices 
are also given. In particular, parts (i) and (iii) follow from \eqref{Holder:W-1}, \eqref{Holder:W-2}, respectively, while the assertions (ii) and (iv) follow from part 2 of Theorem \ref{th:Holder:W}.
In the last part of Section \ref{s6}, we discuss briefly why an extension of Theorem \ref{th:wave} to a non constant $g$ and (or) non vanishing initial conditions seems to be at present out of reach.

\begin{remark}\label{rem:heat}\begin{compactenum}
	
\item Theorems \ref{th:heat} and \ref{th:wave} sometimes have relations to SPDEs driven by
		fractional powers of $\rL$; see Section \ref{sec:frac}.
	\item One can read from Theorems \ref{th:heat} and \ref{th:wave} that condition
		\eqref{cond:R} always implies \eqref{cond:H}. This implication
		can be more directly read off from the following classical estimate,
		which is itself a ready consequence of the L\'evy-Khintchine formula
		\eqref{LKF} below:
		\begin{equation}\label{Bochner}
			\sup_{\xi\in\R^d}\frac{|\psi(\xi)|}{1+\|\xi\|^2}<\infty.
		\end{equation}
		There can be no converse implication. Indeed,
		Proposition \ref{pr:ind:-} yields a non-trivial example where \eqref{cond:H}
		holds but \eqref{cond:R} does not; see also Example \ref{ex:CP}. In
		this way, we can see from Theorems \ref{th:heat} and \ref{th:wave}
		that optimal H\"older regularity in the space variable implies optimal H\"older regularity
		in the time variable, but not conversely.
	\item It should be possible to combine the methods
		of this paper and those of Jacob \cite{Jacob} and Schilling \cite{Schilling} to extend
		our results to the case where $\rL$ is the generator of a nice,  L\'evy like, Markov process.
		Though we have not  considered such a generalization, it would be a worthwhile endeavor
		since a celebrated theorem of \c{C}inlar, Jacod, Protter, and Sharpe \cite{CJPS} asserts that
		every nice (quasi continuous, to be sure) Markov process that is a semimartingale is 
		a time-change of a locally L\'evy-like process. 
\end{compactenum}\end{remark}

Parts (b) and (d) of Theorem \ref{th:heat}, and (ii) and (iv) of Theorem \ref{th:wave} are the main contributions of those results. 
Consider the case that $\rL=\Delta$, the Laplace operator. Then 
$\psi(\xi)=\|\xi\|^2$, and Conditions \eqref{cond:H} and \eqref{cond:R} coincide
and are both equivalent to 
\begin{equation}\label{cc}
	\int_{\R^d} \frac{\mu(\d x)}{(1+\|\xi\|^2)^{1-\eta}}
	<\infty, \quad  \text{for some $\eta\in(0\,,1)$}.
\end{equation}
The work of Sanz-Sol\'e and Sarra \cite{SanzSoleSarra2000,SanzSoleSarra2002} 
implies the sufficiency of this condition
for the local H\"older continuity of the solution to the heat equation with nonlinear noise and nonlinear drift terms. 
In that case, Theorem \ref{th:heat} completes the circle  by providing the necessity
of  \eqref{cond:H} in the case that $g$ is constant and the noise term is additive. 
Concerning the wave equation \eqref{SWE} in spatial dimension $d\in\{1,2,3\}$ (with $\rL=\Delta$), we see from \cite[Section 4]{SanzSoleSarra2000}  that condition \eqref{cc}
implies the H\"older continuity of the sample paths of the random-field solution. 
When $\Gamma$ is a Riesz kernel, the optimality of the H\"older index is proven in \cite[Chapter 5]{DalangSanzSole-2}. Thanks to the results of Sections \ref{sec:conv} and \ref{s5} we see that, in the case of a constant function $g$, and for any dimension $d\ge 1$, Condition \eqref{cc} is necessary and sufficient for the local H\"older continuity of the sample paths of the solution.                                                                                     

Next we say a few things about notation.
Throughout, we write $\Vert x\Vert$ for the Euclidean norm of any 
$x\in \R^d$, and $\|V\|_k = \{\E (|V|^k)\}^{1/k}$
for every $k\ge1$ and all random variables $V$. Moreover, we write
$f\lesssim g$ when the functions $f=f(x)$ and $g=g(x)$ are nonnegative
and there exists a number $c>0$ such that $f\le cg$ pointwise. And naturally, $g\gtrsim f$
is another way to say $f\lesssim g$. If $f\lesssim g$ and $g\lesssim f$, we write $f\asymp g$. In similar fashion, 
we might say that $f\lesssim g$ uniformly for all $x$ in some set $S$
when the respective restrictions $f_S$ and $g_S$ of $f$ and $g$
to $S$ satisfy the uniformly bound $f_S\le cg_S$ for some $c>0$. All vectors in $\R^d$ are consistently treated as
column vectors.
As usual, $\1_F$ denotes the indicator function of $F\subset\R^d$.

We conclude the Introduction with a few remarks about our methods and the existing 
literature. As has been observed on multiple occasions in the literature, 
the only general known method for establishing H\"older regularity of a stochastic process
is to appeal to a suitable form of the Kolmogorov continuity theorem,
which has to do with the moments of the increments of that process. 
In fact, there are beautiful formulations of this assertion that are rigorous theorems in their
own right; see Hahn and Klass \cite{HahnKlass}, Ibragimov \cite{Ibragimov79}, and Kon\^o \cite{Kono}.
Therefore, it does not come as surprise to know that the sufficiency half of our work also uses Kolmogorov's
continuity theorem. Our present challenge stems from the fact that 
a direct estimation/computation of the moments of the increments of the processes here do not naturally
lend themselves to a form that can be used in conjunction with the Kolmogorov continuity theorem.
In the case that $\rL$ is the Laplace operator, 
Sanz-Sol\'e and Sarr\`a \cite{SanzSoleSarra2000,SanzSoleSarra2002} overcome this challenge 
by appealing to H\"older's inequality, after they apply a stochastic version of the factorization method in 
semigroup theory; see also Dalang and Sanz-Sol\'e  \cite{DalangSanzSole} and
Li \cite{Li}. Here, we use a simple but powerful idea from fractional calculus (Lemma
\ref{lem:index}) that does not rely much on the particular form of $\rL$
and yields optimal results. There is a large literature that considers H\"older regularity of stochastic
PDEs; see for example Balan and Chen \cite{BalanChen}, Balan, Jolis, and Quer-Sardanyons \cite{BJQ}, 
Balan, Quer-Sardanyons, and Song \cite{BalanQuerSong},  
Chen and Dalang \cite{ChenDalang2014}, 
Dalang and Sanz-Sol\'e \cite{DalangSanzSole, DalangSanzSole-2},
Hu and Le \cite{HuLe},
Li \cite{Li}, and
Liu and Du \cite{LiuDu}.
There is also a large literature in which H\"older regularity is a  key step
in further analysis of the solution. For a small representative sampler of that
literature, see Bezdek \cite{Bezdek}, 
Boulanba, Eddhabi, and Mellouk \cite{BoulanbaEddhabiMellouk}, 
Chen and Kim \cite{ChenKim,ChenKim2002},
Dalang, Khoshnevisan, and Nualart \cite{DKN}, 
Dalang and Sanz-Sol\'e \cite{DalangSanzSole-3},
Dalang and Pu \cite{DalangPu},
Faugeras and Inglis \cite{FI},
Hu, Nualart, and Song \cite{HuNualartSong},
Hu, Huang, Nualart, and Sun \cite{HuHuangNualartSun},
Huang, Nualart, Viitasaari, and Zhang \cite{HuangNualartViitasaariZhang},
Lun and Warren \cite{LunWarren},
Misiats, Stanzhytskyi, and Yip \cite{MSY},
Nualart \cite{Nualart2013},
Rippl and Sturm \cite{RipplSturm},
and Sanz-Sol\'e and S\"u\ss\ \cite{SanzSoleSuss}.
Our methods can yield a different understanding of some
of these undertakings, and offer potential for extensions.

\section{L\'evy processes}\label{sec:Levy}
\label{s2}

We begin our discussion by making a few brief remarks and observations
about L\'evy processes. More details, and  further
information, on the general theory of L\'evy processes can be found in the 
monographs of Bertoin \cite{Bertoin}, Kyprianou \cite{Kyprianou}, and Sato \cite{Sato}.
Jacob \cite{Jacob} and Schilling \cite{Schilling} include the theory of L\'evy 
(and L\'evy like) processes from the point of view of harmonic analysis, 
particularly useful for the discussions that follow.

\subsection{The L\'evy-Khintchine formula}
Let $X=\{X(t)\}_{t\ge0}$ be a  L\'evy process on $\R^d$ that starts at the origin.
Recall that this means that $X(0)=0$, $t\mapsto X(t)$ is c\`adl\`ag, and 
$X(t+s)-X(s)$ is independent of $\{X(r);\, 0\le r\le s\}$ and distributed as $X(t)$
for all $s,t\ge0$. 
According to the L\'evy-Khintchine formula \cite{Bertoin,Jacob,Kyprianou,Sato,Schilling}, 
we can characterize the law of the
entire  $X$ via the distribution of $X(1)$ using the formula,
\begin{equation}\label{LKF1}
	\E\e^{i\xi\cdot X(t)} = \e^{-t\psi(\xi)}\qquad [t\ge0,\,z\in\R^d].
\end{equation}
In the preceding, $\psi:\R^d\to\C$ is called the \emph{characteristic exponent of $X$}, and has the form,
\begin{equation}\label{LKF}
	\psi(\xi) = -i\ba\cdot\xi
	+ \tfrac12\xi\cdot\bA\xi + \int_{\R^d} \left[ 1 - \e^{iy\cdot\xi}
	+ i(y\cdot\xi)\1_{B(0,1)}(y)\right]\nu(\d y),
\end{equation}
where $B(0\,,1)=\{y\in\R^d:\, \|y\|\le1\}$, $\ba\in\R^d$ is a constant,
\begin{equation}\label{sigma}
	\bA = \bQ'\bQ,
\end{equation}
for a $d\times d$ matrix $\bQ$, and $\nu$ is the L\'evy measure of $X$; that is,
$\nu$ is a $\sigma$-finite Borel measure $\R^d$
that satisfies $\nu\{0\}=0$ and
$\int_{\R^d}(1\wedge \|x\|)^2\,\nu(\d x)<\infty$. 
It is apparent from \eqref{LKF} that $\psi$ is continuous, and $\Re\psi$ is a real-valued,
in fact non negative, function. We tacitly use these facts throughout.

It is well known that
$X$ is a Feller process whose generator $\rL$ is a pseudo-differential operator
with constant symbols and Fourier multiplier $\hat{\rL}=-\psi$; that is,
$\int_{\R^d}(\rL f)(x)g(x)\,\d x = -(2\pi)^{-d}
\int_{\R^d} \hat{f}(\xi)\,\overline{\hat{g}(\xi)}\,\psi(\xi)\,\d\xi$
for every $f,g\in\cS(\R^d)$.
In particular,
test functions of rapid decrease form a core for the domain of the definition of
the generator $\rL$ of $X$, and the action of $\rL$ on $f\in\cS(\R^d)$ is
described as follows:
\begin{equation}\label{gen}
	\rL f = \ba\cdot\nabla f + \tfrac12 \text{Tr}\left(\bQ'\nabla^2 f \bQ\right)
	+ \int_{\R^d} \left[  f(\cdot+y)- f(y)-
	(y\cdot\nabla f)\1_{B(0,1)}(y)\right]\nu(\d y),
\end{equation}
where $\nabla^2=\nabla\nabla'$ denotes the Hessian matrix and $\bQ$ was defined in \eqref{sigma}.

Formulas \eqref{LKF} and \eqref{gen} are analytic ways of saying that we
can decompose $X$ as 
\begin{equation}\label{LKF:X}
	X(t) = \ba t + \bQ B(t) + Y(t)\qquad[t\ge0],
\end{equation}
where $B$ is a $d$-dimensional, standard
Brownian motion and $Y$ is an independent, pure-jump
L\'evy process. Note that the so-called $\bA$-Brownian motion
$\bQ B$ is a mean-zero Gaussian process
with $\Cov[\bQ B(s)\,,\bQ B(t)] = \min(s\,,t)\bA$
for all $s,t\ge0$, whence comes the name. The vector $\ba$ is sometimes referred to as the drift.
Finally, we point out that the structure theory of L\'evy process implies that --
and hinges on -- the fact that $\nu(A)$ is precisely equal
to the expected number of jumps
of $X$ (equiv.\ $Y$) in a Borel set $A\subset\R^d$.

In the Introduction, we mentioned that a standing assumption of this paper
is that the underlying L\'evy process is ``genuinely $d$-dimensional.'' By this we mean that
throughout we assume the following:
\begin{equation}\label{NoZeros}
	\psi^{-1}\{0\}=0,
\end{equation}
In order to see what this condition says, 
suppose to the contrary that there exists $\xi_0\neq0\in\R^d$ such that
$\psi(\xi_0)=0$.  According to the L\'evy-Khintchine formula \eqref{LKF1},
$0=\Re\psi(\xi_0) = \tfrac12 \|\bQ\xi_0\|^2 + 
\int_{\R^d}[ 1-\cos(\xi_0\cdot y)]\nu(\d y).$
Therefore, $\bQ\xi_0=0$ and $\cos(y\cdot\xi_0)=1$ for $\nu$-almost every $y\in\R^d$.
The former condition asserts that the rows of $\bQ$ are orthogonal to $\xi_0$, and the
latter condition is another way to say that $\nu$ is concentrated on the 0-dimensional set 
$\mathcal{Z}=\{y\in\R^d:\, y\cdot\xi_0\in(2\pi\Z)^d\}$. A final look at \eqref{LKF} now shows that
$\ba\cdot\xi_0=0$ also. Because the line of reasoning can be reversed, 
the description of this paragraph characterizes \eqref{NoZeros} and explains the reason for
using it to describe ``non degeneracy'': \eqref{NoZeros} says that either $\bQ'\nabla^2\bQ$ is
strongly elliptic, or $\nu$ does not concentrate on $\mathcal{Z}$, or both.
The class of all (possibly degenerate) L\'evy processes on $\R^d$ includes and extends the class of all
continuous-time random walks, also known as compound Poisson processes. 
Thus, the non concentration of $\nu$ on $\mathcal{Z}$ is an extension
of the notion of genuinely $d$-dimensional
continuous-time random walks, as can be found for  example in Spitzer \cite{Spitzer}. 
Condition \eqref{NoZeros} is natural enough that some authors assume it tacitly.

\subsection{Fractal indices and examples}

It is convenient to introduce two ``fractal indices'' 
that play a fundamental role in the formulation of the results of the paper. 
These indices show a trade-off between the singularity of the pseudo-differential 
operator $\mathcal{L}$ and the spectral spatial covariance of the noise $\dot F$. 

Our first regularity fractal index is
\begin{equation}\label{ind:H}
	\indH  = \sup\left\{a\in(0\,,1) :\
	\int_{\R^d} \frac{|\psi(\xi)|^a\,\mu(\d\xi)}{1+\Re\psi(\xi)}<\infty\right\},
\end{equation}
and our second fractal index is 
\begin{equation}\label{ind:L}
	\indL  = \sup\left\{ b\in(0\,,1):\, \int_{\R^d} \frac{\|\xi\|^{2b}\,\mu(\d\xi)}{1+\Re\psi(\xi)}
	<\infty\right\}.
\end{equation}
By convention, $\sup\varnothing=0$. Observe that \eqref{cond:H} holds iff $\indH >0$,
whereas $\indL >0$ iff \eqref{cond:R} holds. Furthermore,
\eqref{Bochner} readily yields 
\begin{equation}\label{indR<indH}
	\indL\le\indH.
\end{equation}

Let us briefly discuss some examples. 
The first was announced earlier in the Introduction.

\begin{proposition}\label{pr:ind:-}
	One can construct a pair $(\psi\,,\Gamma)$ that satisfies \eqref{Dalang},
	\eqref{NoZeros}, and
	$0=\indL<\indH=1$.
\end{proposition}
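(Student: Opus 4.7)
My plan is to take a characteristic exponent $\psi$ that is globally bounded, so that $|\psi|^{a}$ is bounded and \eqref{cond:H} is essentially free for every $a\in(0\,,1)$, and to combine it with a spectral measure $\mu$ whose tails are so heavy that $\|\xi\|^{2b}\,\mu(\d\xi)$ fails to be integrable for every $b>0$. For the process I take $X$ to be a compound Poisson process on $\R$ of rate $\lambda>0$ with standard normal jumps. Its characteristic exponent is $\psi(\xi)=\lambda(1-\e^{-\xi^{2}/2})$, which is real, nonnegative, bounded above by $\lambda$, and strictly positive for every $\xi\neq 0$; in particular \eqref{NoZeros} holds and $1+\Re\psi$ is comparable to a positive constant on all of $\R$.

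For the spatial covariance I work in dimension $d=1$, fix $\eta>0$, and put
\[
	f(\xi)=\frac{1}{(1+|\xi|)\,(\log(e+|\xi|))^{2+\eta}},
	\qquad
	\mu(\d\xi)=f(\xi)\,\d\xi.
\]
The function $f$ is nonnegative, even, continuous, vanishes at infinity, and factors as $f=g_{1}g_{2}$ with $g_{1}(\xi)=(1+|\xi|)^{-1}$ and $g_{2}(\xi)=(\log(e+|\xi|))^{-2-\eta}$. A direct computation shows that each of $g_{1}$ and $g_{2}$ is convex and decreasing on $[0\,,\infty)$, so
\[
	f''=g_{1}''g_{2}+2g_{1}'g_{2}'+g_{1}g_{2}''\ge 0,
\]
each summand being nonnegative on $(0\,,\infty)$. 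Thus $f$ is convex and decreasing on $[0\,,\infty)$, and P\'olya's criterion yields that $f$ is positive definite on $\R$. Bochner's theorem then produces a finite nonnegative Borel measure $\Gamma$, with continuous nonnegative density $\check f$, such that $\hat\Gamma=f$; this $\Gamma$ is the tempered nonnegative-definite measure whose spectral measure is $\mu$.

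It remains to verify Dalang's condition, $\indH=1$, and $\indL=0$, all of which reduce to elementary tail estimates once one uses that $1+\Re\psi\asymp 1$ uniformly on $\R$. For every $a\in(0\,,1)$ and $b>0$,
\begin{align*}
	\int_{\R}\frac{\mu(\d\xi)}{1+\Re\psi(\xi)}
		&\asymp\int_{1}^{\infty}\frac{\d\xi}{\xi\,(\log\xi)^{2+\eta}}<\infty,\\
	\int_{\R}\frac{|\psi(\xi)|^{a}\,\mu(\d\xi)}{1+\Re\psi(\xi)}
		&\le\lambda^{a}\int_{\R}f(\xi)\,\d\xi<\infty,\\
	\int_{\R}\frac{|\xi|^{2b}\,\mu(\d\xi)}{1+\Re\psi(\xi)}
		&\gtrsim\int_{1}^{\infty}\frac{\xi^{2b-1}}{(\log\xi)^{2+\eta}}\,\d\xi=\infty,
\end{align*}
the last divergence holding because $2b-1>-1$ and no logarithmic factor can defeat a strictly positive polynomial weight. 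These three inequalities give \eqref{Dalang}, $\indH=1$, and $\indL=0$ respectively.

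The main obstacle is not any of the integral estimates but the positivity constraint on $\Gamma$: one has to ensure that the heuristically chosen $\mu$ really arises as $\hat\Gamma$ for a \emph{nonnegative} measure $\Gamma$. Writing $f$ explicitly as a product of two visibly convex decreasing factors is what sidesteps this obstacle, by making P\'olya's criterion directly applicable without any tedious second-derivative computation.
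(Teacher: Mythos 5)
Your argument is correct, and it takes a genuinely different route from the paper's. The paper constructs a pure-jump symmetric L\'evy process with an \emph{unbounded} characteristic exponent $\psi(\xi)\asymp(\log\xi)^2$ (from the L\'evy density $|x|^{-1}\log(1/|x|)$ on $(0\,,1/\e)$) and pairs it with an explicit $\Gamma$ --- a mixture of Poisson kernels --- whose spectral density is $(1+|\xi|)^{-1}$. You instead take a compound Poisson $X$ with Gaussian jumps, so $\psi$ is bounded, and push all of the work into the tails of $\mu$: the extra factor $(\log(\e+|\xi|))^{-2-\eta}$ in $f$ restores the integrability that the paper's unbounded $\psi$ would otherwise supply in Dalang's condition. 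This makes $\indH=1$ immediate, since $|\psi|^a/(1+\Re\psi)$ is bounded for every $a\in(0\,,1)$, whereas the paper's example requires tracking competing logarithmic growth rates. To check that your hand-picked spectral density corresponds to a legitimate nonnegative, nonnegative-definite $\Gamma$, you appeal to P\'olya's criterion where the paper builds $\Gamma$ explicitly; the factorization $f=g_1g_2$ is a tidy way to organize the convexity check, though it is worth noting that convexity of $g_2=(\log(\e+\cdot))^{-2-\eta}$ still rests on a short second-derivative computation, not pure inspection. Your construction is essentially a concrete instance of the compound-Poisson mechanism recorded in Example \ref{ex:CP}; the paper's own example is deliberately built around an unbounded $\psi$, which is what ``non-trivial'' refers to in Remark \ref{rem:heat}, but the proposition as stated does not require unboundedness, so your proof is complete and somewhat more elementary.
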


\begin{proof}
	We consider spatial dimension one, that is $d=1$, and  first construct $\Gamma$ via
	$\Gamma\{0\}=0$ and
	\[
		\frac{\d\Gamma(x)}{\d x}
		=\frac1\pi\int_0^\infty \frac{s\exp(-s)}{s^2+x^2}\,\d s
		\qquad\text{for all $x\in\R\setminus\{0\}$}.
	\]
	This defines a positive and positive definite and symmetric
	probability measure whose Fourier transform is given by
	$\d\mu(\xi)/\d \xi= (1+|\xi|)^{-1}$ for  $\xi\in\R$.
	We plan to construct a symmetric L\'evy process whose
	characteristic exponent satisfies \eqref{NoZeros}
	and $\psi(\xi)=\Re\psi(\xi)\asymp (\log \xi)^2$
	uniformly for all $\xi\ge\e^2$. Because $\psi$ is an even function,
	one can directly check, using \eqref{ind:H} and \eqref{ind:L},
	that this construction yields $0=\indL<\indH =1$ and completes the proof. 
	Therefore, it remains to construct such a characteristic exponent $\psi$.
	
	Consider the particular case of the L\'evy-Khintchine formula \eqref{LKF} with
	${\bf a}=0$, ${\bf A}=0$ and $\d\nu(x)/\d x = -|x|^{-1}\log |x|$ if $0< |x|<1/\e$
	and $\d\nu(x)/\d x=0$ otherwise;
	$\nu$ is a bona fide L\'evy measure since $\int_{\R\setminus\{0\}}
	(1\wedge x^2)\,\nu(\d x)<\infty$.  Also, 
	\[
		\psi(\xi) = 2\int_0^{1/\e}(1-\cos (|\xi|x)\log (1/x)\,\frac{\d x}{x}
		=2\int_0^{\xi/\e} (1-\cos y)\log\left( \frac{|\xi|}{y}\right) \,\frac{\d y}{y}
		\ge\log \xi\int_1^{\sqrt \xi} \left(\frac{1-\cos y}{y}\right) \,\d y,
	\]
	for $\xi\ge\e^2$. Moreover, the first identity holds 
	for all $\xi\in\R$. This readily shows that $\psi(\xi)>0$
	if $\xi\neq0$. Thus \eqref{NoZeros} follows. Also,
	since $\cos\le0$ on every interval $(\frac{n\pi}{2}\,,\frac{n\pi}{2}+\pi)$, 
	as $n$ ranges over $\N$, we may write
	\[
		\psi(\xi) \ge \log \xi\sum_{\substack{n\in\N:\\ (n\pi/2)+\pi \le \sqrt \xi}}
		\int_{n\pi/2}^{(n\pi/2)+\pi} \frac{\d y}{y }
		\ge\log \xi\sum_{\substack{n\in\N:\\ (n\pi/2)+\pi \le \sqrt \xi}} \frac{2}{n+2},
	\]
	uniformly for all $\xi\ge\e^2$. This immediately yields $\psi(\xi)\gtrsim(\log \xi)^2$
	uniformly for all $\xi\ge\e^2$. Conversely,
	\[
		\psi(\xi) \le 2 \int_0^1 \left(\frac{1-\cos y}{y}\right)\log(\xi/y) \,\d y
		+ 4\log \xi\int_1^{\xi/\e}\frac{\d y}{y}
		\lesssim\int_0^1 y\log(\xi/y)\,\d y + (\log \xi)^2 \lesssim (\log \xi)^2,
	\]
	valid uniformly for all $\xi\ge\e^2$. This completes the proof.
\end{proof}

These examples show, among
other things, that there is no general relationship between $\indL$ and $\indH$.

\begin{example}\label{ex:BM}
	First consider a L\'evy process which satisfies \eqref{LKF}
	for a non-singular matrix $\bA=\bQ'\bQ$; in terms of the representation
	\eqref{LKF:X}, this means that $X$ has a non-degenerate Brownian component. In that case,
	$\Re\psi(\xi) \ge\frac12\xi\cdot\bA\xi\gtrsim\|\xi\|^2$, uniformly for
	all $\xi\in\R^d$, and the nondegeneracy condition \eqref{NoZeros} holds. 
	In light of \eqref{Bochner}, we can see that Dalang's
	condition \eqref{Dalang} holds iff 
	$\int_{\R^d}(1+\|\xi\|)^{-2}\,\mu(\d\xi)<\infty.$ Moreover,
	\[
		\indL = \indH =1-\inf\left\{c\in(0\,,1):
		\,\int_{\R^d} \frac{\mu(\d\xi)}{1+\|\xi\|^{2c}}<\infty\right\}
		\qquad[\inf\varnothing=1],
	\]
	and \eqref{cond:R} and \eqref{cond:H} are equivalent to one another, as
	well as to the condition $0<\indL<1$, which is the condition of 
	Sanz-Sol\'e and Sarr\`a \cite{SanzSoleSarra2000,SanzSoleSarra2002}.
\end{example}

The above example covers all L\'evy processes that have a non-degenerate Gaussian component.
In particular the case
$\psi(\xi) = \tfrac{1}{2} \xi\cdot\bA\xi$, which corresponds to the Brownian motion.
In the remainder of our examples we consider L\'evy processes with a degenerate Gaussian component.
For simplicity of exposition, for the remainder of this section we consider in fact only $\bA=0$.

\begin{example}[Compound Poisson processes]\label{ex:CP}
	The process $X$ is compound Poisson (or a continuous-time random walk) iff
	$\nu$ is a finite measure. Let us recall that \eqref{NoZeros} holds
	iff $\nu$ does not concentrate on the countable set
	$\{y\in\R^d:\, y\cdot\xi_0\in(2\pi\Z)^d\}$ for
	any $\xi_0\in\R^d\setminus\{0\}$. This rules out examples such as the 
	standard 1-dimensional Poisson process.  According to \eqref{LKF},
	$\psi(\xi) =-i\tilde{\ba}\cdot\xi+\int_{\R^d}(1-\exp\{iy\cdot\xi\})\nu(\d y),$ where
	$\tilde{\ba} = \ba + \int_{B(0,1)}y\,\nu(\d y).$
	Since $0\le \Re\psi(\xi)\le 2\nu(\R^d)$, we see that condition \eqref{Dalang} holds
	iff the spectral measure $\mu$ is finite. This implies that
	$\Lambda(\d x) = f(x)\,\d x$, for a uniformly continuous, bounded, and
	positive-definite function $f:\R^d\to\R_+$
	\cite[p.\ 145]{don}. Suppose then that the preceding conditions are met.
	Then, 
	\[
		\indL = \sup\left\{ b\in(0\,,1):\, \int_{\R^d} \|\xi\|^{2b} \,\mu(\d\xi)<\infty\right\}
		\quad\text{and}\quad
		\indH = \sup\left\{ c\in(0\,,1):\, \int_{\R^d} |\tilde{\ba}\cdot\xi|^c \,\mu(\d\xi)<\infty\right\},
	\]
	where $\sup\varnothing=0$, as before.
	Because $ |\tilde{\ba}\cdot\xi| \le \|\tilde{\ba}\|\|\xi\|$,
	we have $\frac12\indL\le\indH$ and the inequality can be strict 
	because $\indH=1$ generically when $\tilde{\ba}=0$.
\end{example}

\begin{example}[Radially symmetric stable processes]\label{ex:SSalpha}
	Choose and fix some $\alpha\in(0\,,2)$ and let $\d\nu(x)/\d x=C\|x\|^{-d-\alpha}$,
	where $C>0$ is chosen to ensure that $\psi(\xi)=\|\xi\|^\alpha$ for all $\xi\in\R^d$.
	The resulting process $X$ is called the radially symmetric $\alpha$-stable process. Clearly,
	\eqref{NoZeros} holds, and the generator
	is given by the following principle-value integral,
	\[
		(\rL f)(x) = C\int_{\R^d}\frac{f(x+y)-f(y)}{\|y\|^{d+\alpha}}\,\d y
		\quad\text{for all $x\in\R^d$ and $f\in\cS(\R^d)$};
	\]
	see \eqref{gen}. That is, $\rL$ is [a multiple of] the fractional Laplace operator
	$-(-\Delta)^{\alpha/2}$. In this case, Dalang's condition holds iff 
	$\int_{\R^d}(1+\|\xi\|)^{-\alpha}\,\mu(\d x) <\infty$. And if this holds then
	\[
		\indH= \sup\left\{a\in(0\,,1):\,
		\int_{\R^d}\frac{\mu(\d\xi)}{1+\|\xi\|^{\alpha(1-a)}} <\infty\right\},\quad
		\indL= \sup\left\{b\in(0\,,2/\alpha):\,
		\int_{\R^d}\frac{\mu(\d\xi)}{1+\|\xi\|^{\alpha(1-b)}} <\infty\right\}.
	\]
	Because $\alpha<2$, two different regimes emerge:
	On one hand, if $\mu(\R^d)=\infty$, then $\indL=(\alpha/2)\indH$ and the latter is of course given
	by the previous display. [Send $\alpha\uparrow2$ in order to somewhat informally return to Example
	\ref{ex:BM}.]
	On the other hand, if $\mu(\R^d)<\infty$, then it can happen that 
	$\indL>(\alpha/2)\indH$; in fact, in that case, careful examination of the preceding display yields
	the following:
	\[
		\indH=1
		\quad\text{and}\quad
		\indL=\frac\alpha2 + \frac12\sup\left\{c\in(0\,,2-\alpha):\,
		\int_{\R^d}\|\xi\|^c\,\mu(\d\xi)  <\infty\right\}.
	\]
\end{example}

\begin{example}[Asymmetric Cauchy processes on $\R$]\label{ex:Cauchy}
	Set $d=1$ and $\ba=0$, and 
	select $\d\nu(x)/\d x = c_1x^{-2}\,\d x$ if $x>0$ and
	$\d\nu(x)/\d x=c_2x^{-2}\,\d x$ if $x<0$, where $c_1,c_2>0$ are fixed numbers.
	The resulting process is the so-called Cauchy process on $\R$, and is described by
	$\psi(\xi) = |\xi| + ih\xi\log|\xi|$ for $\xi\neq0$, and $\psi(0)=0$.
	The \emph{asymmetry parameter} $h$ depends on $c_1$ and $c_2$ and
	can be made to be any number in $[-2/\pi\,,2/\pi]$ by suitably adjusting $c_1,c_2$.
	The symmetric case $h=0$ is subsumed by Example \ref{ex:SSalpha} $[d=\alpha=1]$.
	We now look at the asymmetric case, $h\neq0$. Dalang's condition \eqref{Dalang}
	holds iff $\int_{\R}(1+|\xi|)^{-1}\,\mu(\d\xi)<\infty$, the same as it does in the 
	symmetric case. Moreover, in the asymmetric case, $|\psi(\xi)|\sim |h\xi\log|\xi||$ as
	$\xi\to\pm\infty$. Since $\log|\xi|=|\xi|^{o(1)}$ as $|\xi|\to\infty$, it follows that,
	regardless of the strength of the asymmetry
	parameter $h$,
	\[
		\indL={\tfrac12}\indH\qquad\text{and}\qquad
		\indH=\sup\left\{a\in(0\,,1):\,\int_{\R}\frac{\mu(\d\xi)}{1+|\xi|^{1-a}}<\infty\right\}.
	\]
\end{example}

\subsection{The Kolmogorov-Fokker-Planck equation}
The fundamental solution to the corresponding heat operator $\partial_t-\rL$ is
the solution to the integro-differential equation
$\partial_t \varphi = \rL\varphi$ on $(0\,,\infty)\times\R^d$,
subject to $\varphi(0)= \delta_0$ on $\R^d$.
Since $\hat{\rL}=-\psi$, we can apply the Fourier transform
in order to see that the fundamental solution to $\partial_t-\rL$ is
the following probability-measure-valued function of $t$:
\begin{equation}\label{p_t(F)}
	p(t\,,F) = \P\{X(t)\in F\}\qquad\text{for all $t\ge0$ and Borel sets $F\subset\R^d$}.
\end{equation}
These are the so-called \emph{transition functions} of the L\'evy process $X$.

The preceding discussion tells us that if $u_0$ is a tempered, nonnegative-definite
Borel measure on $\R^d$ then the associated Kolmogorov-Fokker-Planck equation,
$\partial_t f = \rL f$ on $(0\,,\infty)\times\R^d$,
subject to $f(0)=u_0$ on $\R^d$,
is well posed, and has a unique solution that is a function from $\R_+$ to the space of
tempered distributions (Borel probability measures, in fact),
and that unique solution is in fact described by 
$f(t\,,\cdot)=p(t)*u_0$ at time $t\ge0$. The following addresses
the local H\"older regularity of that solution
when $u_0$ is a random field satisfying some moment-type constraints. 

\begin{proposition}\label{pr:heat}
	Let $u_0=\{u_0(x);\, x\in\R^d\}$ be a random field. Assume that
	there exist real numbers $k\ge1$ and $\eta\in(0\,,1]$, and a constant $C:=C(k)>0$ such that
	\begin{equation}\label{u_0}
		\sup_{z\in\R^d}\E\left(|u_0(z)|^k\right)<\infty
		\quad\text{and}\quad
		\E\left( |u_0(x) - u_0(y)|^k\right) \le C\|x-y\|^{\eta k}\qquad\text{uniformly
		for all $x,y\in\R^d$}.
	\end{equation}
	Then, there exists $c=c(k)>0$ such that
	\begin{equation}\label{mom}
		\E\left( \left| (p(t+\varepsilon)*u_0)(-x) - (p(t)*u_0)(-y) \right|^k \right)
		\le c \left( \varepsilon^{1/2} + \|x-y\|\right)^{\eta k},
	\end{equation}
	uniformly for all $t\ge0$, $\varepsilon\in[0\,,1]$, and $x,y\in\R^d$.
\smallskip
	
	If \eqref{u_0} holds for every $k\ge1$, then for every $a\in(0\,,\eta)$,
$(t\,,x)\mapsto (p(t)*u_0)(-x)$ a.s.\ belongs to
	$C^{a/2,a}_{\text{\it loc}}(\R_+\times\R^d)$.
	 \end{proposition}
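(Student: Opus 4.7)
The plan is to set $V(t,x) := (p(t)*u_0)(-x) = \int_{\R^d} u_0(-x-z)\,p(t,\d z)$ and use the Chapman-Kolmogorov identity $p(t+\varepsilon) = p(t)*p(\varepsilon)$ (a restatement of the independent increments of $X$) to write
\[
V(t+\varepsilon, x) - V(t, y) = \iint_{\R^d\times\R^d} \bigl[u_0(-x-z-w) - u_0(-y-z)\bigr]\,p(t,\d z)\,p(\varepsilon,\d w),
\]
which decouples the spatial increment (via $x,y$) from the temporal increment (via $w$) in a form amenable to the H\"older hypothesis on $u_0$.

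Next I take the $L^k$ norm over the randomness of $u_0$ and apply Minkowski's integral inequality with respect to the probability measure $p(t)\otimes p(\varepsilon)$, pushing the norm inside the integrals. The hypothesis \eqref{u_0} combined with the trivial bound $\|u_0(a)-u_0(b)\|_k \le 2\sup_z\|u_0(z)\|_k$ gives $\|u_0(a)-u_0(b)\|_k \lesssim \min(\|a-b\|^\eta, 1)$, with a constant depending only on $k$. Substituting $a-b = y-x-w$ and invoking the elementary inequalities $\|y-x-w\|^\eta \le \|x-y\|^\eta + \|w\|^\eta$ (subadditivity of $r\mapsto r^\eta$ for $\eta\in(0,1]$) and $\min(\alpha+\beta,1)\le \min(\alpha,1)+\min(\beta,1)$, I arrive at
\[
\|V(t+\varepsilon, x) - V(t, y)\|_k \lesssim \|x-y\|^\eta + \E\bigl[\min(\|X(\varepsilon)\|^\eta, 1)\bigr].
\]

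The central technical step is then to show $\E[\min(\|X(\varepsilon)\|^\eta, 1)] \lesssim \varepsilon^{\eta/2}$ uniformly for $\varepsilon\in[0,1]$. Applying Jensen's inequality to the concave map $r\mapsto r^{\eta/2}$ reduces this to $\E[\|X(\varepsilon)\|^2 \wedge 1] \lesssim \varepsilon$, which I would establish via the L\'evy-It\^o decomposition underlying \eqref{LKF:X}: split $\nu = \nu|_{B(0,1)} + \nu|_{B(0,1)^c}$ and write $X \stackrel{d}{=} X_1 + X_2$ with independent L\'evy processes. The process $X_1$ (drift, Gaussian, and compensated small jumps) satisfies $\E\|X_1(\varepsilon)\|^2 \lesssim \varepsilon$ for $\varepsilon\in[0,1]$, because $\int_{B(0,1)}\|y\|^2\,\nu(\d y)<\infty$; the process $X_2$ is compound Poisson with finite intensity $\nu(B(0,1)^c)$, so $\P\{X_2(\varepsilon)\neq 0\} \le \nu(B(0,1)^c)\,\varepsilon$. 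Bounding $\|X(\varepsilon)\|^2\wedge 1$ by $\|X_1(\varepsilon)\|^2$ on $\{X_2(\varepsilon)=0\}$ and by $1$ on the complement delivers the desired estimate. Combining with $a^\eta+b^\eta \le 2(a+b)^\eta$ for $\eta\in(0,1]$ and raising to the $k$-th power yields \eqref{mom}.

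For the final assertion, once \eqref{mom} is known for every $k\ge 1$, Kolmogorov's continuity theorem applied to $V$ on $\R_+\times\R^d$ under the parabolic metric $\rho((t,x),(t',x')) := |t-t'|^{1/2}+\|x-x'\|$ --- for which product Lebesgue measure has local volume growth exponent $d+2$ --- produces a continuous modification of $V$ that is locally $a$-H\"older in $\rho$ for every $a < \eta - (d+2)/k$. Letting $k\to\infty$ gives local $a$-H\"older continuity in $\rho$ for every $a<\eta$, which is precisely the assertion $V\in C^{a/2,a}_{\text{loc}}(\R_+\times\R^d)$. The main obstacle I anticipate is the L\'evy moment estimate $\E[\|X(\varepsilon)\|^2\wedge 1] \lesssim \varepsilon$, which must be truly uniform in $\varepsilon\in[0,1]$ with a constant depending only on $(\ba,\bA,\nu)$; a secondary technical point is the joint measurability of $u_0$ required for Minkowski's inequality, which is standard after passing to a measurable modification of the random field.
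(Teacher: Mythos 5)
Your argument is correct and follows essentially the paper's approach: both reduce to the key estimate $\E(\|X(\varepsilon)\|^2\wedge 1)\lesssim\varepsilon$ on $[0,1]$ (proved via the L\'evy--It\^o split into Gaussian-plus-compensated-small-jump and compound-Poisson pieces, with martingale variance for the former and the Poisson no-jump bound for the latter), then apply Jensen with $r\mapsto r^{\eta/2}$ and Minkowski to transfer the H\"older modulus of $u_0$ through the transition semigroup, and conclude with Kolmogorov's continuity theorem. Your use of Chapman--Kolmogorov to handle the joint increment in one double integral, and of Minkowski's integral inequality in place of the paper's explicit conditioning on $u_0$ and triangle inequality, is a tidy reorganization rather than a genuinely different route.
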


The proposition rests on the following result, which is a quantitative version of the assertion
that every L\'evy process on $\R^d$ is locally subdiffusive.

\begin{lemma}\label{lem:diffuse}
	$\E( \|X(t)\|^2\wedge 1) \lesssim t$ uniformly for all $t\in[0\,,1]$.
\end{lemma}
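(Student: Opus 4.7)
The plan is to use the L\'evy--It\^o decomposition together with a truncation of the jumps at size $1$, so that the small-jump part is controlled in $L^2$ while the large-jump part is controlled via a probability-of-no-jump estimate. The $\wedge1$ on the left-hand side is exactly what makes this possible without any moment assumption on $\nu$.

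First, starting from \eqref{LKF:X} I would refine the representation of the pure-jump component $Y$ by splitting its Poisson random measure $N$ of jumps at size $1$, writing
\[
	X(t) = \ba t + \bQ B(t) + Y_s(t) + Y_\ell(t),
\]
where $Y_s(t) = \int_0^t\int_{\|y\|\le 1} y\,\tilde N(\d s,\d y)$ is the compensated small-jump martingale, $Y_\ell(t) = \int_0^t\int_{\|y\|>1} y\,N(\d s,\d y)$ is a compound Poisson process with intensity $\lambda := \nu(\{\|y\|>1\})$, and the four summands are mutually independent. The Poisson-integrability condition $\int_{\R^d}(1\wedge\|y\|^2)\,\nu(\d y)<\infty$ from \S\ref{sec:Levy} guarantees both $\lambda<\infty$ and $\sigma^2 := \int_{\|y\|\le 1}\|y\|^2\,\nu(\d y)<\infty$.

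Second, I would bound the ``small'' part in $L^2$. Independence and the It\^o isometry for Poisson integrals give
\[
	\E\|\ba t + \bQ B(t) + Y_s(t)\|^2
	\le 3\bigl(\|\ba\|^2 t^2 + t\,\mathrm{Tr}(\bA) + t\sigma^2\bigr)
	\lesssim t\qquad\text{uniformly for }t\in[0,1].
\]

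Third, I would exploit the truncation $\wedge 1$ to absorb $Y_\ell$. On the event $\{Y_\ell(t)=0\}$ we have $X(t)=\ba t+\bQ B(t)+Y_s(t)$, while on its complement $\|X(t)\|^2\wedge 1\le 1$. Since $\P\{Y_\ell(t)\neq 0\}=1-e^{-\lambda t}\le \lambda t$, independence of $Y_\ell$ from the remaining summands gives
\[
	\E(\|X(t)\|^2\wedge 1)
	\le \P\{Y_\ell(t)\neq 0\} + \E\|\ba t + \bQ B(t) + Y_s(t)\|^2
	\lesssim t,
\]
uniformly on $[0,1]$.

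There is no serious obstacle here; the only conceptual point worth emphasizing is that, unlike $\E\|X(t)\|^2$ which may be infinite when $\nu$ has no second moment at infinity, the truncation $\wedge 1$ converts the contribution of heavy-tailed jumps into a linear-in-$t$ probability tail, which is exactly what the L\'evy measure condition delivers.
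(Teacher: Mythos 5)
Your proof is correct and follows essentially the same route as the paper's: both decompose $X$ via L\'evy--It\^o, bound the drift + Gaussian + compensated-small-jump part in $L^2$ (linearly in $t$), and absorb the large-jump compound Poisson part through the no-large-jump probability estimate $1-e^{-\lambda t}\le\lambda t$, using the truncation $\wedge 1$ to make that step costless. The only cosmetic difference is that the paper first reduces to $d=1$ coordinate-wise and applies the no-jump trick to the pure-jump part $Y$, whereas you work directly in $\R^d$ and apply it to $X$.
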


\begin{proof}
	Let $X_i$ denote the $i$th coordinate process of $X$ and observe that
	$\E(\|X(t)\|^2\wedge 1) \lesssim\sum_{i=1}^d
	\E( |X_i(t)|^2\wedge 1)$, uniformly for all $t\ge0$.
	Since every $X_i$ is a one-dimensional L\'evy process, the above reduces the problem
	to a one-dimensional problem. Therefore, we may and will assume without loss in generality
	that $d=1$. In this case, \eqref{LKF:X} reduces to the existence of real numbers $a,q\ge0$
	such that $X(t)=at+qB(t)+Y(t)$ for all $t\ge0$, where $B$ is a standard linear Brownian motion
	and $Y$ is an independent one-dimensional, pure-jump L\'evy process. It follows from this that
	\begin{equation}\label{EX}
		\E( |X(t)|^2\wedge 1) \lesssim t^2 + 
		\E(  |B(t)|^2 \wedge 1  ) 
		+ \E( |Y(t)|^2 \wedge 1 )
		\lesssim t + \E( |Y(t)|^2 \wedge 1),
	\end{equation}
	uniformly for all $t\in [0\,,1]$.
	Thanks to the structure theory of L\'evy processes, we may write
	$Y(t) = S(t) + L(t)$ for all $t\ge0$ where $S$ and $L$ are independent one-dimensional
	L\'evy processes, and 
	\[
		L(t) = \sum_{s\in[0,t]}(\Delta Y)(s)\1_{\{(\Delta Y)(s)>1\}}\qquad\text{for all $t\ge0$},
	\]
	where $(\Delta Y)(s) = Y(s)-Y(s-)$. The letters ``$L$'' and ``$S$'' respectively refer 
	to the ``large jumps'' and the ``small jumps'' of $Y$. The collection of all of the jumps of
	$Y$ that are $>1$ in magnitude has a Poisson distribution with parameter 
	$t\nu(\R\setminus[-1\,,1])$. Therefore, for every $t\ge0$,
	$\P\{ L(s)\neq0\text{ for some $s\in[0\,,t]$}\}
	= 1 - \exp\{-t\nu(\R\setminus[-1,1])\}\lesssim t$,
	and hence
	\begin{equation}\label{EY}
		\E( |Y(t)|^2 \wedge 1 )
		\le \E( |S(t)|^2 \wedge 1 ) + \P\{L(t)\neq0\}  \lesssim \E(|S(t)|^2) + t,
	\end{equation}
	uniformly for all $t\ge0$.
	The process $S$ is L\'evy with zero drift and zero diffusion, and its L\'evy measure is the restriction
	of $\nu$ to $[-1\,,1]$. Therefore,  the structure theory of L\'evy processes
	implies that the stochastic process
	$\{|S(t)|^2-t\int_{-1}^1 y^2\,\nu(\d y);\,t\ge0\}$ is a mean-zero martingale, whence
	$\E(|S(t)|^2)= t\int_{-1}^1 y^2\,\nu(\d y)$ for every $t\ge0$.
	Combine with \eqref{EY} and \eqref{EX} to complete the proof.
\end{proof}

\begin{proof}[Proof of Proposition \ref{pr:heat}]
	The moment bound \eqref{mom} and Kolmogorov's continuity theorem together imply the H\"older
	regularity statement. Therefore, it suffices to establish \eqref{mom}.
	
	Without any loss in generality, we can and will assume that $u_0$
	and $X$ are independent. With this in mind,
	let us note that $(p(t)*u_0)(-x) = \E[u_0(x+X(t))]$ for all $t\ge0$ and $x\in\R^d$.
	Therefore, if we write $\mathbb{E}(\,\cdots) = \E(\,\cdots\mid u_0)$, and denote by 
	$L^k(u_0)$ the space $L^k(\Omega)$ endowed with a regular version of the 
	conditional probability $\P(\,\cdots\mid u_0)$,
	we can deduce from disintegration, \eqref{u_0}, and the triangle inequality that
	\begin{equation}\label{C1}\begin{split}
		\left\| \left( p(t)*u_0\right)(-x) - \left( p(t) *u_0\right)(-z) \right\|_{L^k(u_0)}
			&\le \mathbb{E}\left( \left\| u_0(x+X(t)) - u_0(z+X(t))\right\|_{L^k(u_0)}\right)\\
		&\le C^{1/k}\|x-z\|^\eta,
	\end{split}\end{equation}%
	valid uniformly for all $k\ge1$, $x,z\in\R^d$, and $t\ge0$. On the other hand,
	set $D=\sup_{z\in\R^d}\|u_0(z)\|_k$ to infer from \eqref{u_0} that,
	for all $k\ge1$, the following holds uniformly for all $x\in\R^d$, and $t,\varepsilon\ge0$:
	\begin{align*}
		&\left\| \left( p(t+\varepsilon)*u_0\right)(-x) - \left( p(t) *u_0\right)(-x) \right\|_{L^k(u_0)}
			\le \mathbb{E}\left( \left\| u_0(x+X(t+\varepsilon)) - 
			u_0(x+X(t))\right\|_{L^k(u_0)}\right)\\
		&\le \E\left[ C^{1/k}\|X(t+\varepsilon) - X(t)\|^\eta \wedge 2D\right] = 
			\E\left[ C^{1/k}\|X(\varepsilon)\|^\eta \wedge 2D\right]
			\lesssim \E\left( \|X(\varepsilon)\|^\eta \wedge 1\right)\le
			\left\{\E\left( \|X(\varepsilon)\|^2 \wedge 1\right)\right\}^{\eta/2},
	\end{align*}
	thanks to Jensen's inequality. 
	   Lemma \ref{lem:diffuse} then
	implies that for every $k\ge1$,
	\[
		\E\left(\left| \left( p(t+\varepsilon)*u_0\right)(-x) - \left( p(t) *u_0\right)(-x) 
		\right|^k\right)
		\lesssim \varepsilon^{\eta k/2},
	\]
	uniformly for all $x\in\R^d$, and $t,\varepsilon\ge0$. Combine this with
	\eqref{C1} to conclude \eqref{mom} whence the lemma.
\end{proof}

\section{The stochastic convolution}
\label{sec:conv}
In this section we study the sample-function regularity of the Fourier-Laplace type stochastic 
convolution,
\begin{equation}\label{inf:cov}
	\int_{(0,t)\times\R^d}\varphi(t-s\,,x-y)\dot{\rf}(s\,,y)\,\d s\,\d y
	:= \int_{(0,t)\times\R^d}\varphi(t-s\,,x-y)\,\rf(\d s,\d y) :=
	(\varphi*\rf)(t\,,x),
\end{equation}
of a  deterministic function $\varphi:\R_+\times\R^d\to\R$ and the noise $\dot{\rf}$, using 
ideas of Wiener on stochastic integration. 

Fix $\varphi:\R_+\times\R^d\to\R$ deterministic, and
 $t_1,t_2\ge0$, $x_1,x_2\in\R^d$. If we formally,
and somewhat liberally, exchange expectations
with integrals, and if we treat $\delta_0$ and $\Gamma$ in \eqref{Cov} as nice functions,
then we see from \eqref{Cov} and \eqref{inf:cov} that
\begin{align*}
	&\Cov\left[ (\varphi*\rf)(t_1,x_1) ~,~ (\varphi*\rf)(t_2\,,x_2)\right]\\
	&= \int_{(0,t_1)\times(0,t_2)}\d s_1\,\d s_2\int_{\R^d\times\R^d}\d y_1\,\d y_2\
		\varphi(t_1-s_1\,,x_1-y_1)\varphi(t_2-s_2\,,x_2-y_2)
		\E\left[ \dot{\rf}(s_1\,,y_1)\dot{\rf}(s_2\,,y_2)\right]\\
	&=\int_0^{t_1}\d s_1\int_0^{t_2}\d s_2\int_{\R^d\times\R^d}\d y_1\,\d y_2\
		\varphi(t_1-s_1\,,x_1-y_1)\varphi(t_2-s_2\,,x_2-y_2)
		\delta_0(s_1-s_2)\Gamma(y_1-y_2)\\
	&=\int_0^{t_1\wedge t_2}
		\left( \varphi(t_1-s)*\varphi(t_2-s)*\Gamma\right)(x_1-x_2)\,\d s.
\end{align*}
The first two identites are not rigorous and contain ill-defined integrals. However,
the final expression
makes perfectly good sense as a Lebesgue integral
when $\varphi$ is a nice space-time function. Under appropriate conditions on  $\varphi$ one can in fact apply inverse Fourier transforms to see that
\begin{equation}\label{phi*f:Cov}
	\Cov\left[ (\varphi*\rf)(t_1,x_1) ~,~ (\varphi*\rf)(t_2\,,x_2)\right]
	= (2\pi)^{-d} \int_0^{t_1\wedge t_2}\d s\int_{\R^d}\mu(\d\xi)\
	\hat{\varphi}(t_1-s\,,\xi)\overline{\hat{\varphi}(t_2-s\,,\xi)} \,\e^{i(x_1-x_2)\cdot\xi},
\end{equation}
where $\hat{\varphi}(t\,,\xi) = \int_{\R^d}\exp(i\xi\cdot x)\varphi(x)\,\d x,$
whenever the preceding integral makes sense as a Lebesgue integral, and we recall that
$\mu=\hat{\Gamma}$ is a tempered Borel measure on $\R^d$ since $\Gamma$ is a
tempered, nonnegative-definite Borel measure on $\R^d$.

In order to rigorously construct the stochastic convolution, one merely reverse-engineers
the preceding ``calculation'' and starts with \eqref{phi*f:Cov} as a definition.
We outline the details next.

\begin{definition}
	Let $\cK$ denote the class of all functions 
	$\varphi:\R_+\to\cS'(\R^d)$ such that:
	\begin{compactenum}[(1)]
		\item $\hat{\varphi}(t)\in L^2(\mu)$ for every $t\ge0$; and
		\item There is a version $\hat{\varphi}:\R_+\times\R^d\to\mathbb{C}$ that
			is jointly measurable that satisfies
			\begin{equation}\label{D}
				0< \int_0^t\d s\int_{\R^d}\mu(\d\xi)\ |\hat{\varphi}(s\,,\xi)|^2<\infty
				\qquad\text{for every $t>0$}.
			\end{equation}
	\end{compactenum}
	We refer to elements of $\cK$ as \emph{kernels}.
\end{definition}
 The fundamental solutions to some PDEs provide typical examples of \emph{kernels}.
The strict positivity condition in \eqref{D} is there only to avoid degenerate cases.

Now, choose and fix a kernel $\varphi\in\cK$, and
denote the quantity on the right-hand side of \eqref{phi*f:Cov} by $\< (t_1\,,x_1)\,,(t_2\,,x_2)\>$.
We may view the latter as a pairing
between $(t_1\,,x_1)$ and $(t_2\,,x_2)$ for all $(t_1\,,x_1),(t_2\,,x_2)\in\R_+\times\R^d$.
One can readily see that this pairing defines
a nonnegative-definite Hermitian product on $(\R_+\times\R^d)\times
(\R_+\times\R^d)$. The general theory of Gaussian processes then ensures that 
there exists a $(d+1)$-parameter Gaussian process $\varphi*\rf=\{(\varphi*\rf)(t\,,x);\, t\ge0,x\in\R^d\}$
whose covariance is given by the above pairing; that is, 
$\varphi*\rf$ satisfies \eqref{phi*f:Cov}. Observe that $\varphi*\rf$ is stationary in space. 
One can in fact construct $\varphi*\rf$ for a slightly larger
family of non-random integrands by replacing Item (2) above by the condition that
the function $t\mapsto \|\hat{\varphi}(t)\|_{L^2(\mu)}$ is in $L^2_{\text{\it loc}}(\R_+)$. But the
presently assumed joint measurability of $\hat\varphi$ is convenient.

The goal of this section is to find close-to-optimal local H\"older regularity results for
stochastic convolutions of the above type. 
For the  remainder of this section
we choose and fix a nonrandom number $T>0$ and a 
kernel $\varphi\in\cK$, and define four
indices that play a key role in the regularity of the sample functions of $\varphi*\rf$.
In the particular examples of the stochastic heat and wave equations, we will prove in Lemma \ref{lem:ind:H} and Lemma \ref{lem:ind:W}, respectively, their relationship with the indices $\indH$ and $\indL$ (defined in \eqref{ind:H}
and \eqref{ind:L}, respectively).

The first index is defined as follows:
\begin{equation}\label{phi*f:indR}
	\cI_R(T) = \sup\left\{\eta\in(0\,,1):\,
	\int_0^T\d s\int_{\R^d}\mu(\d\xi)\ \|\xi\|^{\eta}
	\| \hat{\varphi}(s\,,\xi)|^2 <\infty\right\},
\end{equation}
where $\sup\varnothing=0$. 
\medskip

\noindent{\em H\"older regularity in space}
\begin{proposition}\label{pr:phi*f:x}
	If $\cI_R(T)>0$ then, for all $a\in(0\,,\cI_R(T))$
	\[
		\left\| (\varphi*\rf)(t\,,h) - (\varphi*\rf)(t\,,0)\right\|_2 \lesssim
		\|h\|^{a/2},
	\]
	uniformly for all $t\in[0\,,T]$ and $h\in\R^d$ that satisfy $\|h\|\le1$.
	
	On the other hand, if $\cI_R(T)=0$, then
	\[
		\limsup_{h\to0} \|h\|^{-a}
		\left\| (\varphi*\rf)(t\,,h) - (\varphi*\rf)(t\,,0)\right\|_2 =\infty
		\qquad\text{for every $a>0$ and $t\ge T$}.
	\]
	
	Furthermore, for any $a\in(\cI_R(T),\infty)$, 
	\[
	\limsup_{h\to 0}  \|h\|^{-a}
		\left\| (\varphi*\rf)(t\,,h) - (\varphi*\rf)(t\,,0)\right\|_2 =\infty
		\qquad\text{for every $t\ge T$}.
		\]
\end{proposition}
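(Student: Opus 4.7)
All three claims begin from the Gaussian variance identity
\[
\bigl\|(\varphi*\rf)(t,h)-(\varphi*\rf)(t,0)\bigr\|_2^2
= \frac{2}{(2\pi)^d}\int_0^t\!\d s\int_{\R^d}\mu(\d\xi)\,|\hat\varphi(t-s,\xi)|^2\bigl(1-\cos(h\cdot\xi)\bigr),
\]
obtained by expanding $\|A-B\|_2^2=\|A\|_2^2+\|B\|_2^2-2\Cov(A,B)$, applying \eqref{phi*f:Cov} with $t_1=t_2=t$, and dropping the imaginary part of $\int|\hat\varphi|^2\e^{ih\cdot\xi}\mu(\d\xi)$ by symmetry of $\mu$. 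This identity drives both directions of the argument.

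For the first claim I would apply the elementary inequality $1-\cos\theta\le C|\theta|^a$, valid for every $\theta\in\R$ and every $a\in[0,2]$ (check $|\theta|\le1$ and $|\theta|>1$ separately), with $\theta=h\cdot\xi$ to obtain $1-\cos(h\cdot\xi)\le C\|h\|^a\|\xi\|^a$. Substituting into the identity and enlarging the $s$-integral to $[0,T]$ yields $\|\cdots\|_2^2\lesssim\|h\|^a\int_0^T\!\d s\int\mu(\d\xi)\,|\hat\varphi(s,\xi)|^2\|\xi\|^a$ uniformly in $t\in[0,T]$, and for $a<\cI_R(T)$ the final integral is finite by \eqref{phi*f:indR}; a square root produces the claimed $\|h\|^{a/2}$ bound.

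For the two lower-bound claims I would argue by contradiction using the classical Riesz identity
\[
\int_{\R^d}\|h\|^{-d-a'}\bigl(1-\cos(h\cdot\xi)\bigr)\,\d h=\kappa(d,a')\,\|\xi\|^{a'}\qquad[a'\in(0,2)],
\]
where $\kappa(d,a')>0$ follows from scaling and rotational invariance. Assume $\limsup_{h\to0}\|h\|^{-a}\|(\varphi*\rf)(t,h)-(\varphi*\rf)(t,0)\|_2<\infty$ for some $t\ge T$; then there is $M>0$ with $\|\cdots\|_2^2\le M\|h\|^{2a}$ for $\|h\|\le 1$, while for $\|h\|>1$ the uniform bound $\|\cdots\|_2\le 2\|(\varphi*\rf)(t,0)\|_2$, finite by \eqref{D} and spatial stationarity, takes over. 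Splitting $\int_{\R^d}\|h\|^{-d-a'}\|\cdots\|_2^2\,\d h$ at $\|h\|=1$, the small-$\|h\|$ piece is integrable when $a'<2a$ and the large-$\|h\|$ piece when $a'>0$, so the whole integral is finite for every $a'\in(0\,,\min(2a\,,2))$. Fubini (legal because the integrand is nonnegative) together with the Riesz identity recasts this finite quantity as a positive multiple of $\int_0^t\!\d s\int\mu(\d\xi)\,|\hat\varphi(s,\xi)|^2\|\xi\|^{a'}\ge\int_0^T(\cdots)\,\d s$, and \eqref{phi*f:indR} declares the latter lower bound to equal $+\infty$ for every $a'\in(\cI_R(T)\,,1)$. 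Choosing $a'\in(\cI_R(T)\,,\min(2a\,,1))$, an interval that is nonempty for all $a>0$ in the second claim (where $\cI_R(T)=0$) and for all $a>\cI_R(T)$ in the third (provided $\cI_R(T)<1$), produces the desired contradiction.

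The main obstacle I foresee is bookkeeping: verifying the global $L^2$-boundedness needed for the large-$\|h\|$ tail (a direct consequence of \eqref{D} and spatial stationarity) and justifying the Fubini exchange (painless given nonnegativity). The only genuinely delicate situation is the boundary case $\cI_R(T)=1$ in the third claim, where the interval $(\cI_R(T)\,,1)$ degenerates; I would handle it separately by invoking the Riesz identity with $a'\in(1\,,2)$ together with the pointwise lower estimate $1-\cos\theta\ge\theta^2/4$ for $|\theta|\le 1$, which forces $\|\cdots\|_2\gtrsim\|h\|$ and hence precludes decay of order $\|h\|^a$ with $a>1$.
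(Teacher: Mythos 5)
Your proposal is correct and takes a recognizably different (and in places cleaner) route than the paper's proof.

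For the upper bound, you substitute the pointwise inequality $1-\cos\theta\le C|\theta|^a$ ($a\in[0,2]$) directly into the variance identity to obtain $\|(\varphi*\rf)(t,h)-(\varphi*\rf)(t,0)\|_2^2\lesssim\|h\|^a\int_0^T\d s\int\mu(\d\xi)\,\|\xi\|^a|\hat\varphi(s,\xi)|^2$, which is finite for $a<\cI_R(T)$. This is short and correct. The paper instead bounds $1-\cos(h\cdot\xi)\le\|\xi\|^2\|h\|^2\wedge 1$, applies Lemma~\ref{lem:A} and the integral criterion of Lemma~\ref{lem:index} to the monotone majorant $g(\varepsilon)$; that route works too, but as written it invokes Lemma~\ref{lem:A} with $c:=2a$, which would require $\int\|\xi\|^{2a}\mu(\d\xi)|\hat\varphi|^2<\infty$ rather than $\int\|\xi\|^{a}\mu(\d\xi)|\hat\varphi|^2<\infty$; taking $c:=a$ instead yields $g(\varepsilon)\lesssim\varepsilon^a$ and the claimed $\|h\|^{a/2}$ bound. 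Your argument bypasses this bookkeeping entirely, which is a genuine simplification.

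For the two lower-bound claims, both you and the paper hinge on the Riesz identity $\int(1-\cos(\xi\cdot h))\|h\|^{-d-a'}\d h=\kappa\|\xi\|^{a'}$ and a Fubini exchange (licit by nonnegativity). The paper argues directly: it concludes $\int f(h)\|h\|^{-d-a'}\d h=\infty$, subtracts the convergent large-$\|h\|$ tail, and discretizes the remaining divergence over the dyadic annuli $\{\e^{-n}\le\|h\|\le\e^{-n+1}\}$ to read off the blow-up of the local modulus. You argue by contradiction: assuming a H\"older bound near $0$ makes the $\|h\|^{-d-a'}$-weighted integral finite for suitable $a'$, contradicting the divergence forced by $\cI_R(T)$. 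The two arguments are two sides of the same coin; neither is conceptually harder or longer. Your careful handling of the restriction $a'\in(\cI_R(T),\min(2a,1))$ for the third claim, and your recognition that this interval degenerates when $\cI_R(T)=1$, is a good catch — the paper's proof of the third claim is in fact silent on this boundary case (it tacitly uses $a\in(\cI_R(T),1)$, which is empty when $\cI_R(T)=1$). Your sketch of a patch via $1-\cos\theta\ge\theta^2/4$ is in the right direction, but it needs a small nondegeneracy observation: the lower bound $\|(\varphi*\rf)(t,h)-(\varphi*\rf)(t,0)\|_2\gtrsim|h\cdot e|$ is obtained direction-by-direction, with the implicit constant proportional to $\int_0^t\d s\int\mu(\d\xi)|\hat\varphi(s,\xi)|^2(e\cdot\xi)^2$, so one must choose a unit vector $e$ for which this is strictly positive. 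Such an $e$ exists unless $\mu|\hat\varphi|^2$ is carried by $\{0\}$ — a degenerate situation that the paper likewise does not address (and in which the third statement can genuinely fail, e.g.\ $\mu=c\,\delta_0$).
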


\begin{proof}
	First, let us suppose that $\cI_R(T)>0$ for a fixed $T>0$.
	Choose and fix some $a\in(0\,, \cI_R(T))$.
	The definitions of $\cK$ and $\cI_R(T)$ together imply that
	\begin{equation}\label{A}
		\int_0^T\d s\int_{\R^d}  \mu(\d\xi) \left(1+\|\xi\|^{a}\right) \
		\left| \hat{\varphi}(s\,,\xi)\right|^2 <\infty.
	\end{equation}
	
	According to \eqref{phi*f:Cov},  for any $h\in\R^d$,
	\begin{equation}\label{REF}\begin{split}
		\E\left( \left| (\varphi*\rf)(t\,,h) - (\varphi*\rf)(t\,,0)\right|^2\right)
			&= \frac{2}{(2\pi)^d}\int_0^t\d s\int_{\R^d}\mu(\d\xi)\
			\left| \hat{\varphi}(s\,,\xi)\right|^2 \left[ 1-\cos(h\cdot\xi)\right]\\
		&\le\frac{2}{(2\pi)^d}\int_0^T\d s\int_{\R^d}\mu(\d\xi)\
			\left| \hat{\varphi}(s\,,\xi)\right|^2  \left(\|\xi\|^2 \|h\|^2\wedge 1\right),
	\end{split}\end{equation}
	uniformly for all $t\in[0\,,T]$. Define
	\[
		g(\varepsilon) = \sup_{t\in(0,T]}\sup_{\substack{h\in\R^d:\\\|h\|\le\varepsilon}}
		\E\left( \left| (\varphi*\rf)(t\,,h) - (\varphi*\rf)(t\,,0)\right|^2\right)
		\qquad\text{for all $\varepsilon>0$}.
	\]
	As part of the definition of $\cI_R(T)$,
	we are assured that $a\in(0\,,1)$; see \eqref{phi*f:indR}.
	Therefore, \eqref{REF} and Lemma \ref{lem:A} of the appendix (with $b=2$ and $c:=2a<2$)
	together show that
	$\int_0^1\varepsilon^{-1-2a}g(\varepsilon)\,\d\varepsilon<\infty.$
	Since $g$ is non decreasing and measurable, Lemma	
	\ref{lem:index} of the appendix ensures that 
	$g(\varepsilon)\lesssim\varepsilon^{2a}$ uniformly for all $\varepsilon\in(0\,,1)$.
	The first assertion of the proposition follows. 
	
	For the second part, we suppose $\cI_R(T)=0$ for a fixed $T>0$.
	Since $\varphi\in\cK$, \eqref{phi*f:indR} assures us that
	\[
		\int_0^t\d s\int_{\R^d}\mu(\d\xi)\
		|\hat{\varphi}(s\,,\xi)|^2\|\xi\|^{a}=\infty
		\qquad\text{for all $t\ge T$ and $a\in(0\,,1)$}.
	\]
	Choose and fix $a\in(0\,,1)$ and $t\ge T$. In the following we will use the fact that
	\begin{equation}\label{cos}
		\int_{\R^d} \left( \frac{1-\cos(\xi\cdot h)}{\|h\|^{d+a}}\right)\d h
		= c \|\xi\|^{a}\qquad\text{for all $\xi\in\R^d$},
	\end{equation}
	where $c=c(d\,,a)>0$ is a real number.
	This holds because the left-hand side of \eqref{cos} converges whenever $a\in(0\,,1)$
	and defines a radial function of $\xi$.
	
	Define
	\[
		f(h) = \E\left( \left| (\varphi*\rf)(t\,,h) - (\varphi*\rf)(t\,,0)\right|^2\right)\qquad\text{for
		every $h\in\R^d$}.
	\]
	We may combine the first line of \eqref{REF} with \eqref{cos} to see that, 
	for all $\xi\in\R^d$ and $t\ge T$,
	\begin{equation}\label{f:infty}
		\int_{\R^d} f(h)\frac{\d h}{\|h\|^{d+a}}
		= \frac{2c}{(2\pi)^d}\int_0^t\d s\int_{\R^d}\mu(\d\xi)\
		\left| \hat{\varphi}(s\,,\xi)\right|^2 \|\xi\|^{a}=\infty
	\end{equation}
	Define $f_n = \sup_{\|x\|\le\exp(-n)}f(x)$ for every $n\in\Z_+$
	and observe that
	\[
		\sum_{n=0}^\infty \e^{na} f_{n}
		\gtrsim \sum_{n=1}^\infty f_{n-1}
		\int_{\e^{-n}\le\|h\|\le \e^{-n+1}} \frac{\d h}{\|h\|^{d+a}}\ge
		\int_{\|h\|\le1/\e} f(h)\,\frac{\d h}{\|h\|^{d+a}}.
	\]
	Because of \eqref{REF}, $f$ is bounded uniformly from above and therefore,
	$\int_{\|h\|>1/\e} f(h)\|h\|^{-d-a}\, \d h$
        converges. Hence by 
	\eqref{f:infty} $\int_{\|h\|\le1/\e} f(h)\|h\|^{-d-a}\, \d h=\infty$. Consequently,
	 $\sum_{n=0}^\infty \e^{na}f_{n}=\infty.$
	This proves that $\ell=\sup_{n\ge 1}\e^{nb}f_{n} =\infty$
	whenever $b>a$ for 
	$\sum_{n=0}^\infty\e^{na} f_n
	\le \ell\sum_{n=0}^\infty \e^{-n(b-a)}<\infty$
	otherwise. Because $\ell=\infty$, a monotonicity argument implies that
	$\limsup_{h\to0} \|h\|^{-b} f(h) =\infty$
	for every $b>a$.
	Since $a>0$ is arbitrary, this completes the proof of the proposition.
	
	For the proof of the last statement, we observe that, if $a\in(\cI_R(T),\infty)$ then 
	\[
	\int_0^T \d s\int_{\R^d}\mu(\d\xi)|\hat{\varphi}(s\,,\xi)|^2\|\xi\|^{a}=\infty
		\]
Defining $f_T(h) = \E\left( \left| (\varphi*\rf)(T\,,h) - (\varphi*\rf)(T\,,0)\right|^2\right)$, with the
same arguments used to obtain \eqref{f:infty}, we have 
$\int_{\R^d} f_T(h)\,\d h/\Vert h\Vert^{d+a}=\infty$. This yields the assertion.
	\end{proof}
	
	In the particular instances of the heat and wave equation, it turns out that $\mathcal{I}_R(T)$ is independent of $T$: see Lemmas \ref{lem:ind:H} and \ref{lem:ind:W}, respectively.

\medskip

\noindent{\em H\"older regularity in time}
\smallskip

We introduce the remaining relevant indices of this section:
\begin{equation}\label{phi*f:indHSS'}\begin{split}
	\cI_H &= \sup\left\{ b\in(0\,,1):\,
		\int_0^1\frac{\d s}{s^b}\int_{\R^d}\mu(\d\xi)\ 
		| \hat{\varphi}(s\,,\xi)|^2 <\infty\right\},\\
	\overline{\cI}_H(T)  &= \sup\left\{b\in(0\,,1):\,
		\int_0^1\frac{\d\varepsilon}{\varepsilon^{1+b}}
		\int_0^T \d s\int_{\R^d}\mu(\d\xi)\
		\left| \hat{\varphi}(s + \varepsilon \,,\xi)-\hat{\varphi}(s\,,\xi)\right|^2
		<\infty\right\},\\
	\underline{\cI}_H(T)  &= \sup\left\{b\in(0\,,1):\,
		\int_0^1\frac{\d\varepsilon}{\varepsilon^{1+b}}\sup_{r\in[0,\varepsilon]}
		\int_0^T\d s\int_{\R^d}\mu(\d\xi)\
		\left| \hat{\varphi}(s + r \,,\xi)-\hat{\varphi}(s\,,\xi)\right|^2
		<\infty\right\},
\end{split}\end{equation}
where $T>0$. Clearly, $0\le\underline{\cI}_H(T)\le\overline{\cI}_H(T)\le1$.

\begin{proposition}\label{pr:phi*f:t}
	Assume  $\cI_H\wedge\underline{\cI}_H(T)>0$. Then, for any  $b\in(0\,,\cI_H\wedge\underline{\cI}_H(T))$,
	
	\[
		 \left\|(\varphi*\rf)(t+\varepsilon\,,0) - (\varphi*\rf)(t\,,0) \right\|_2
		 \lesssim\varepsilon^{b/2},
	\]
	uniformly for all $t\in[0\,,T]$ and $\varepsilon\in(0\,,1)$. 
	
	On the other hand, for all $t>T$ and $b > \cI_H\wedge\overline{\cI}_H(T)$,
	\[
		\limsup_{\varepsilon\to0^+}\varepsilon^{-b/2}
		\left\|(\varphi*\rf)(t+\varepsilon\,,0) - (\varphi*\rf)(t\,,0) \right\|_2 =\infty.
		\]
\end{proposition}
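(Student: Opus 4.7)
The plan is to first reduce to a clean $L^2$ identity. Setting $x_1=x_2=0$ in \eqref{phi*f:Cov} and making the change of variable $r=t-s$, one obtains
\[
  \left\|(\varphi*\rf)(t+\varepsilon,0)-(\varphi*\rf)(t,0)\right\|_2^2 = (2\pi)^{-d}\bigl[I_1(\varepsilon)+I_2(t,\varepsilon)\bigr],
\]
where
\[
  I_1(\varepsilon):=\int_0^\varepsilon\!\d r\int_{\R^d}\mu(\d\xi)\,|\hat\varphi(r,\xi)|^2, \qquad
  I_2(t,\varepsilon):=\int_0^t\!\d r\int_{\R^d}\mu(\d\xi)\,|\hat\varphi(r+\varepsilon,\xi)-\hat\varphi(r,\xi)|^2.
\]
The summand $I_1$ records the small-time behavior of $\hat\varphi$ near $0$ and is naturally governed by $\cI_H$, while $I_2$ records the time modulus of $\hat\varphi$ and is governed by $\underline{\cI}_H(T)$ from above and $\overline{\cI}_H(T)$ from below. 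With this splitting in hand, the two parts of the proposition reduce to independent questions about each summand.

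For the upper bound, fix $b\in(0\,,\cI_H\wedge\underline{\cI}_H(T))$. The bound $I_1(\varepsilon)\lesssim\varepsilon^b$ is immediate from $r^b\le\varepsilon^b$ on $(0\,,\varepsilon)$ together with $b<\cI_H$. For $I_2$, I would introduce the non-decreasing function $G(\varepsilon):=\sup_{r\in[0,\varepsilon]}I_2(T,r)$, observe that $I_2(t,\varepsilon)\le G(\varepsilon)$ for $t\in[0\,,T]$ by monotonicity in $t$, and exploit the fact that $b<\underline{\cI}_H(T)$ gives $\int_0^1\varepsilon^{-1-b}G(\varepsilon)\,\d\varepsilon<\infty$; then Lemma \ref{lem:index} delivers $G(\varepsilon)\lesssim\varepsilon^b$. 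Combining the two bounds yields the first assertion.

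For the lower bound, fix $t>T$ and $b>\cI_H\wedge\overline{\cI}_H(T)$ (it suffices to treat $b<1$, as larger $b$ only weakens the claim). Then either $b>\cI_H$ or $b>\overline{\cI}_H(T)$; because both $I_1$ and $I_2$ are nonnegative, I bound the squared increment from below by $(2\pi)^{-d}I_1(\varepsilon)$ in the first case, or by $(2\pi)^{-d}I_2(T,\varepsilon)$ (using $I_2(t,\varepsilon)\ge I_2(T,\varepsilon)$ since $t\ge T$) in the second. The heart of the argument, and the main obstacle, is a Hardy-type contradiction: if, say, $\limsup_{\varepsilon\downarrow0}\varepsilon^{-b}I_1(\varepsilon)$ were finite, then picking any $b'\in(\cI_H\,,b)$ and applying Fubini to $\int_0^1 r^{-b'}h(r)\,\d r$ with $h(r):=\int\mu(\d\xi)|\hat\varphi(r,\xi)|^2$ would show this integral is finite, contradicting $b'>\cI_H$; the $I_2$ case is analogous and uses the boundedness of $I_2(T,\cdot)$ on $[\delta\,,1]$. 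The strict inequality $b>\cI_H\wedge\overline{\cI}_H(T)$ is essential to make the Fubini step converge (one needs $\int_0^\delta t^{b-b'-1}\,\d t<\infty$), and this is also why the two halves of the proposition naturally use different indices: the upper bound needs the stronger $\underline{\cI}_H(T)$, with the supremum so that Lemma \ref{lem:index} can act on a monotone function, while the lower bound recovers only the weaker $\overline{\cI}_H(T)$.
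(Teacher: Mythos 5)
Your proposal is correct and follows the same skeleton as the paper's proof: you split the squared $L^2$ increment into $I_1(\varepsilon)$ and $I_2(t,\varepsilon)$ exactly as the paper does, and you assign $\cI_H$ to $I_1$ and $\underline{\cI}_H(T)/\overline{\cI}_H(T)$ to $I_2$ in the same way. The differences are in how you extract pointwise estimates from the integral finiteness conditions. For the upper bound on $I_1$, the paper routes through Tonelli, the integral test, and Lemma \ref{lem:index}; you observe directly that $I_1(\varepsilon)=\int_0^\varepsilon h(r)\,\d r\le\varepsilon^b\int_0^1 r^{-b}h(r)\,\d r\lesssim\varepsilon^b$ for $b<\cI_H$, which is more elementary and a genuine (if small) simplification. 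For the $I_2$ upper bound you use Lemma \ref{lem:index} on the monotone envelope $G(\varepsilon)=\sup_{r\le\varepsilon}I_2(T,r)$, which is essentially the paper's $\mathscr{B}(a)$ argument. For the lower bound the paper invokes the monotone-equality half of Lemma \ref{lem:index}, while you run a direct Fubini-type contradiction: if $I_1(\varepsilon)\lesssim\varepsilon^b$ with $b>b'>\cI_H$, then $\int_0^1\varepsilon^{-1-b'}I_1(\varepsilon)\,\d\varepsilon<\infty$, and Tonelli forces $\int_0^1 r^{-b'}h(r)\,\d r<\infty$, contradicting $b'>\cI_H$ (and similarly for $I_2$ using the uniform bound $I_2(T,\varepsilon)\lesssim\int_0^{T+1}\!\int\mu(\d\xi)|\hat\varphi|^2\,\d s$ to control $[\delta,1]$). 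Both arguments establish the same contrapositive; yours avoids citing Lemma \ref{lem:index} in its sharp (monotone) form, at the cost of rederiving a special case of it inline. You also correctly identify why the upper bound requires the stronger index $\underline{\cI}_H(T)$ (the supremum makes $G$ monotone, enabling Lemma \ref{lem:index}) while the lower bound only uses $\overline{\cI}_H(T)$ — this matches the paper's design of the two indices.
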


\begin{proof}
	Choose and fix $t\in[0\,,T]$.
	Owing to \eqref{phi*f:Cov}, we may write for every $\varepsilon>0$,
	\[
		\E\left(\left|(\varphi*\rf)(t+\varepsilon\,,0) - (\varphi*\rf)(t\,,0) \right|^2\right)
		= \frac{I_1(\varepsilon) + I_2(\varepsilon) }{(2\pi)^d},
	\]
	where
	\begin{align*}
		I_1(\varepsilon) &= \int_t^{t+\varepsilon}\d s\int_{\R^d}\mu(\d\xi)\
			\left| \hat{\varphi}(t + \varepsilon -s\,,\xi)\right|^2
			= \int_0^\varepsilon\d s\int_{\R^d}\mu(\d\xi)\
			\left| \hat{\varphi}(s \,,\xi)\right|^2,\\
		I_2(\varepsilon,t) &= \int_0^t\d s\int_{\R^d}\mu(\d\xi)\
			\left| \hat{\varphi}(s + \varepsilon \,,\xi)-\hat{\varphi}(s\,,\xi)\right|^2.
	\end{align*}
	Tonelli's theorem implies that, for every $a>0$,
	\[
		\int_0^1\frac{\d\varepsilon}{\varepsilon^{1+a}}\ I_1(\varepsilon)
		= \int_0^1\d s\int_s^1\frac{\d\varepsilon}{\varepsilon^{1+a}}
		\int_{\R^d}\mu(\d\xi)\
		\left| \hat{\varphi}(s\,,\xi)\right|^2
		=\frac{1}{a}\int_0^1\left(\frac{1}{s^a}-1\right)\d s
		\int_{\R^d}\mu(\d\xi)\
		\left| \hat{\varphi}(s\,,\xi)\right|^2.
	\]
	Assume that $a\in(0\,,\cI_H)$. Because $\varphi\in\cK$, it follows that
	\[
		\int_0^1\frac{\d\varepsilon}{\varepsilon^{1+a}}\ I_1(\varepsilon)
		<\infty.
	\]
	Since $I_1$ is monotone, we can apply the integral test from calculus 
	to obtain $\sum_{n=0}^\infty \e^{a n}I_1(\e^{-n})<\infty$.
	Consequently, if $a\in(0\,,\cI_H)$ then $I_1(\e^{-n})=o(\e^{a n})$ as $n\to\infty$ along integers;
	see also Lemma \ref{lem:index} of the appendix.
	We deduce from this the following:
	\begin{equation}\label{I_1:1}
		\text{For every }a\in(0\,,\cI_H)\text{ there exists a number $C>0$
		such that }
		I_1(\varepsilon) \le C\varepsilon^a\text{ for all $\varepsilon\in(0\,,1)$}.
	\end{equation}
	We also learn from Lemma \ref{lem:index} that for every $a>\cI_H$,

	\begin{equation}\label{I_1:2}
		\limsup_{\varepsilon\to0^+}\varepsilon^{-a}I_1(\varepsilon)
		=\infty.
	\end{equation}
	
	For all $a>0$, define
	\begin{align*}
		\mathscr{A}(a) &=\int_0^1\frac{\d\varepsilon}{\varepsilon^{1+a}}\ I_2(\varepsilon)
			=\int_0^1\frac{\d\varepsilon}{\varepsilon^{1+a}}
			\int_0^T\d s\int_{\R^d}\mu(\d\xi)\
			\left| \hat{\varphi}(s + \varepsilon \,,\xi)-\hat{\varphi}(s\,,\xi)\right|^2,\\
		\mathscr{B}(a)& =\int_0^1\frac{\d\varepsilon}{\varepsilon^{1+a}}\ \sup_{r\in[0,\varepsilon]} I_2(r)
			=\int_0^1\frac{\d\varepsilon}{\varepsilon^{1+a}}
			\sup_{r\in[0,\varepsilon]}\int_0^T\d s\int_{\R^d}\mu(\d\xi)\
			\left| \hat{\varphi}(s + r \,,\xi)-\hat{\varphi}(s\,,\xi)\right|^2.
	\end{align*}
	The justifications of \eqref{I_1:1} and \eqref{I_1:2} can now be repurposed in
	order to show the following:
	\begin{compactenum}
	\item If $a\in(0\,,\underline{\cI}_H(T))$, then $ \mathscr{B}(a)<\infty$ and therefore
		$I_2(\varepsilon,t)\lesssim\varepsilon^b$ uniformly for all $t\in[0,T]$, $\varepsilon\in(0\,,1)$
		and $b\in(0\,,a)$.
		Along with \eqref{I_1:1}, this yields the first statement of the proposition;
	\item $\mathscr{A}(a)=\infty$ for every $a>\overline{\cI}_H(T)$.  It follows 
		that $\limsup_{\varepsilon\to0^+} \varepsilon^{-a}I_2(\varepsilon,T)=\infty$. 
		Together with \eqref{I_1:2}, this implies the second statement of the proposition.
	\end{compactenum}
\end{proof}		

\section{The linear heat equation}
\label{s4}
In this section we return to SPDEs,
and study the following linearization of the heat equation 
\eqref{SHE}:
\begin{equation}\left[\label{SHE:G}\begin{split}
	&\partial_t H = \rL H+ \dot{\rf} \quad\text{on $(0\,,\infty)\times\R^d$},\\
	&\text{subject to}\quad H(0)=0\text{ on $\R^d$}.
\end{split}\right.\end{equation}

The general theory of Dalang \cite{Dalang} ensures that \eqref{SHE:G} has a random-field solution
if and only if \eqref{Dalang} holds. Let us briefly review why this is the case in the present, simple context, of 
the heat equation \eqref{SHE:G}.

Because the fundamental solution to the operator $\partial_t-\rL$ is 
$p(t,\cdot)=\P\{X(t)\in\cdot\,\}$ [see for example the discussion around \eqref{p_t(F)}]
the stochastic heat equation \eqref{SHE:G} has a random-field solution if and only if the
stochastic convolution $p*\rf$ is a well-defined Gaussian random field. This means that
we have to have 
$p\in\cK$. Since $p(t)$ is a Borel probability measure for
every $t\ge0$, \eqref{NoZeros}, and \eqref{LKF1} 
together imply that \eqref{SHE:G} has a random-field solution if and only if
\[
	\int_0^T\d s\int_{\R^d}\mu(\d\xi)\left| \e^{-s\psi(\xi)}\right|^2
	= T\mu\{0\}+\frac{1}{2}\int_{\R^d\setminus\{0\}}
	\left(\frac{1-\e^{-2T\Re\psi(\xi)}}{\Re\psi(\xi)} \right)\mu(\d\xi)<\infty
	\qquad\text{for all $T>0$}.
\]

Because $\Re\psi\ge0$ and
$u^{-1}(1-\e^{-u})\asymp(1+u)^{-1}$ uniformly for all $u>0$, this shows that \eqref{Dalang} is necessary and sufficient for \eqref{SHE:G} to have a random-field solution.

In addition, Dalang's theory ensures that, when the solution exists as a random 
field, it automatically is continuous in $L^2(\Omega)$ in the following rather strong sense:
\[
 	\lim_{\varepsilon\to 0^+}\sup_{\substack{s,t\in[0,T]:\\ |s-t|\le\varepsilon}}
	\sup_{\substack{x,y\in\R^d:\\ \|x-y\|\le\varepsilon}}
	\E\left( |H(t\,,x) - H(s\,,y)|^2\right)=0\qquad\text{for all $T>0$}.
\]
In particular, Doob's separability theory ensures that under condition
\eqref{Dalang}, the random field $(t\,,x)\mapsto H(t\,,x)$ 
is Lebesgue measurable [more precisely, has a Lebesgue-measurable modification].
In this section we identify when exactly $H$ has locally H\"older-continuous sample functions.
Before we discuss those details, let us point out that,
as a consequence of the mentioned separability,
Kallianpur's zero-one law for Gaussian processes (Kallianpur \cite{Kallianpur};
see also Cambanis and Rajput \cite{CambanisRajput}) tells us that for all $s>0$ and $y\in\R^d$:
\begin{align}\label{0-1:law}
&\P\{H(\cdot\,,y)\text{ is locally H\"older continuous 
		over $\R_+$}\}=0\text{ or }1,\notag\\
	&\P\{H(s,\cdot)\text{ is locally H\"older continuous 
		over $\R^d$}\}=0\text{ or }1,\\
	&\P\{H\text{ is locally H\"older continuous 
		over $\R_+\times\R^d$}\}=0\text{ or }1\notag.
\end{align}
Therefore, we sometimes simply say that $H$ has [or does not have, depending on the case]
locally H\"older continuous samples when we really mean to say that
$H$ a.s.\ has [or a.s.\ does not have, depending on the case] a modification 
that is locally H\"older continuous. 

The goal of this section is to prove the following result. Throughtout we assume \eqref{Dalang}.
It might help to also remind that if $\indL>0$ then $\indH>0$, but the converse need not hold; 
see \eqref{indR<indH} and Proposition \ref{pr:ind:-}.
 

 \begin{theorem}\label{th:Holder:H}
 1.\ If $\indH>0$, then
	\begin{equation}\label{H:x}
		\P\left\{H(\cdot\,,x)\in C^{\alpha}_{\textit{loc}}(\R^d)\right\}=1\quad
		\text{for every $x\in\R^d$ and $0< \alpha< \tfrac12\indH$;}
	\end{equation}
	and if $\indL>0$, then 
	\begin{equation}\label{H:t}
		\P\left\{H\in C^{\alpha,\beta}_{\textit{loc}}(\R_+\times\R^d)\right\}=1\quad
		\text{for every $0<\alpha<\tfrac12\indH$ and $0<\beta<\indL$.}
	\end{equation}
	
	2.\ If $\indH=0$, then $t\mapsto H(t\,,0)$ is not H\"older continuous;
	and if $\indL=0$, then $x\mapsto H(t\,,x)$ is not H\"older continuous for every  $t>0$.
\end{theorem}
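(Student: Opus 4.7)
The fundamental solution to $\partial_t-\rL$ is the probability-measure-valued function $\varphi(t,\cdot)=p(t,\cdot)=\P\{X(t)\in\cdot\,\}$, so the solution of \eqref{SHE:G} is $H=p*\rf$ with $\hat\varphi(s,\xi)=\e^{-s\psi(\xi)}$. My plan is to apply the stochastic-convolution machinery of Section~\ref{sec:conv} (Propositions~\ref{pr:phi*f:x} and \ref{pr:phi*f:t}) to this particular kernel, via a matching lemma (the forthcoming Lemma~\ref{lem:ind:H}) that identifies the abstract convolution indices $\cI_R(T),\cI_H,\underline\cI_H(T),\overline\cI_H(T)$ with $\indH$ and $\indL$. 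The Gaussianity of $H$ will then upgrade the resulting $L^2$ estimates to $L^k$ estimates for every $k\ge1$, and Kolmogorov's continuity theorem will deliver the positive assertions, while the zero-one law \eqref{0-1:law} combined with Gaussian concentration will deliver the negative ones.

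For the positive part I would establish the two $L^2$ bounds
\[
\E|H(t,x+h)-H(t,x)|^2\lesssim\|h\|^{2\beta}\ (\beta<\indL),\qquad \E|H(t+\varepsilon,x)-H(t,x)|^2\lesssim\varepsilon^a\ (a<\indH),
\]
uniformly for $(t,x)$ in bounded boxes and small $h,\varepsilon$. The first follows from \eqref{phi*f:Cov} applied to $\varphi=p$, the elementary inequality $1-\cos\theta\lesssim|\theta|^{2\beta}$ for $\beta\in(0\,,1]$, and $\int_0^t\e^{-2s\Re\psi(\xi)}\,\d s\lesssim 1/(1+\Re\psi(\xi))$, which together reduce matters to $\int\|\xi\|^{2\beta}\mu(\d\xi)/(1+\Re\psi(\xi))<\infty$, i.e.\ to $\beta<\indL$. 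The second splits as $I_1(\varepsilon)+I_2(\varepsilon,t)$ in the notation of Proposition~\ref{pr:phi*f:t}; estimating $(1-\e^{-2u})/u\lesssim u^{a-1}\wedge 1$ and interpolating $|\e^{-\varepsilon\psi(\xi)}-1|\le\min(2,\varepsilon|\psi(\xi)|)$ (using $\Re\psi\ge0$) to extract a factor of $\varepsilon^a$ leaves $\int|\psi|^a\,\mu(\d\xi)/(1+\Re\psi)<\infty$, i.e.\ $a<\indH$. Because $H$ is centered Gaussian, $\|\,\cdot\,\|_k\asymp\|\,\cdot\,\|_2$ for every $k\ge 1$; the triangle inequality then gives $\|H(t+\varepsilon,x+h)-H(t,x)\|_k\lesssim\varepsilon^{a/2}+\|h\|^\beta$ for every $a<\indH$, $\beta<\indL$, $k\ge 1$, and a standard $(d+1)$-parameter Kolmogorov continuity theorem (with $k$ large enough to beat the dimension) yields \eqref{H:x} and \eqref{H:t}.

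For the negative assertions, assume $\indH=0$ (the case $\indL=0$ is symmetric). By Lemma~\ref{lem:ind:H}, both $\cI_H$ and $\overline\cI_H(T)$ vanish for $\varphi=p$, and so the second half of Proposition~\ref{pr:phi*f:t} yields $\limsup_{\varepsilon\to0^+}\varepsilon^{-b/2}\|H(t+\varepsilon,0)-H(t,0)\|_2=\infty$ for every $b>0$. A standard Borell--TIS/Fernique concentration argument then shows that if $t\mapsto H(t,0)$ were a.s.\ locally H\"older of some exponent $b>0$ on a deterministic interval, one would have $\|H(t+\varepsilon,0)-H(t,0)\|_2\lesssim\varepsilon^b$, a contradiction. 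Hence $t\mapsto H(t,0)$ is not a.s.\ locally H\"older, and \eqref{0-1:law} upgrades this to the almost sure negation. The symmetric argument for the space variable uses Proposition~\ref{pr:phi*f:x} in place of Proposition~\ref{pr:phi*f:t}.

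The main obstacle is the deferred matching Lemma~\ref{lem:ind:H}: identifying $\cI_R(T)$ with (a truncation of) $2\indL$ and $\cI_H$, $\underline\cI_H(T)$, $\overline\cI_H(T)$ with $\indH$ requires tight two-sided control of $(1-\e^{-2s\Re\psi(\xi)})/\Re\psi(\xi)$ and $|\e^{-\varepsilon\psi(\xi)}-1|$, with separate treatment of the regions $\Re\psi(\xi)\ll1$ (near the unique zero $\psi^{-1}\{0\}=\{0\}$ forced by \eqref{NoZeros}) and $\Re\psi(\xi)\gg1$. A secondary subtlety is that $\cI_R(T)$ is a supremum over $(0\,,1)$ by definition, so to capture the full range $\beta<\indL$ when $\indL\ge 1/2$ one must bypass Proposition~\ref{pr:phi*f:x} and carry out the direct Fourier computation of the preceding paragraph in its own right.
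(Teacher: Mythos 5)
Your proposal follows essentially the same skeleton as the paper's proof: reduce to the abstract convolution indices of Section~\ref{sec:conv} by identifying them with $\indH$ and $\indL$ via Lemma~\ref{lem:ind:H}, upgrade $L^2$ bounds to all $L^k$ using Gaussianity, apply Kolmogorov for the positive assertions, and invert the regularity implication (plus the zero--one law) for the negative ones. Two points of genuine divergence are worth recording.

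First, for the negative direction you invoke a Borell--TIS/Fernique concentration argument (finiteness of the H\"older seminorm is finiteness of a Gaussian supremum, hence $L^2$), whereas the paper uses the Paley--Zygmund inequality (Lemma~\ref{lem:PZ}) together with Bonferroni, which only requires the second and fourth Gaussian moments via \eqref{L^n(O)}. Both routes produce the bound \eqref{C} that contradicts the second halves of Propositions~\ref{pr:phi*f:t} and \ref{pr:phi*f:x}; the paper's is somewhat more elementary, yours requires no extraction of the quantitative step \eqref{A0} from the almost-sure statement.

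Second, your observation about $\cI_R(T)$ is a real subtlety that the paper glosses over. The index $\cI_R(T)$ is by definition \eqref{phi*f:indR} a supremum over $\eta\in(0,1)$, so $\cI_R(T)\le 1$, while $2\indL$ can be as large as $2$; Lemma~\ref{lem:ind:H}'s statement $\cI_R(T)=2\indL$ can therefore only hold in the regime $\indL\le\tfrac12$, and Proposition~\ref{pr:phi*f:x} by itself caps the spatial H\"older exponent at $\tfrac12$. The paper's step from \eqref{heat:moment:00} (which only reaches exponents $<b/2<\cI_R(T)/2\le\tfrac12$) to \eqref{heat:moment} (which asserts exponent $\bar b<\indL$) silently relies on the same direct Fourier computation you spell out, namely $1-\cos(h\cdot\xi)\lesssim\|h\|^{2\beta}\|\xi\|^{2\beta}$ integrated against $\int_0^t\e^{-2s\Re\psi(\xi)}\,\d s\lesssim(1+\Re\psi(\xi))^{-1}$, which gives $\E|H(t,x+h)-H(t,x)|^2\lesssim\|h\|^{2\beta}$ for all $\beta<\indL$ directly, with no cap. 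Your proposal makes this explicit where the paper does not; otherwise the two arguments coincide.
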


Note that Theorem \ref{th:heat} reduces to a weaker version of Theorem \ref{th:Holder:H}
when $g\equiv u_0\equiv0$.

As the first step of
the proof of Theorem \ref{th:Holder:H} we evaluate the indices $\cI_H$, $\underline{\cI}_H$,
and $\overline{\cI}_H$ in terms of the indices $\indH$, $\indL$ and $\cI_R(T)$ defined in \eqref{ind:H}, \eqref{ind:L}, \eqref{phi*f:indR}, respectively.

\begin{lemma}\label{lem:ind:H}
	$\cI_R(T)=2 \indL$ for all $T>0$,
	and $\cI_H=\underline{\cI}_H(T)=\overline{\cI}_H(T)=\indH$.
\end{lemma}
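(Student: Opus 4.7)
The plan is to compute each of the four indices by reducing the defining integrals to integrals over $\xi$ alone via Tonelli, using the closed form $\hat{\varphi}(s,\xi)=e^{-s\psi(\xi)}$, and then to match the results with the definitions of $\indL$ and $\indH$. Observe that $|\hat{\varphi}(s,\xi)|^2=e^{-2s\Re\psi(\xi)}$ and
\[
|\hat{\varphi}(s+\varepsilon,\xi)-\hat{\varphi}(s,\xi)|^2=e^{-2s\Re\psi(\xi)}\,|e^{-\varepsilon\psi(\xi)}-1|^2.
\]

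The easiest case is $\cI_R(T)=2\indL$: Tonelli together with the elementary estimate $\int_0^T e^{-2sv}\,ds=(1-e^{-2Tv})/(2v)\asymp(1+v)^{-1}$, valid uniformly in $v\ge 0$ (with constants depending on the fixed $T$), reduces the integral defining $\cI_R(T)$ to a constant multiple of $\int_{\R^d}\|\xi\|^\eta\mu(d\xi)/(1+\Re\psi(\xi))$; this is finite precisely when $\eta=2b$ with $b<\indL$. For $\cI_H$, a change of variables $u=2sv$ shows $\int_0^1 s^{-b}e^{-2sv}\,ds\asymp(1+v)^{b-1}$, so the defining integral is comparable to $\int_{\R^d}(1+\Re\psi(\xi))^{b-1}\mu(d\xi)\asymp\int_{\R^d}|\psi(\xi)|^b\mu(d\xi)/(1+\Re\psi(\xi))$, the last $\asymp$ using that, for Lévy characteristic exponents subject to \eqref{NoZeros}, replacing $|\psi|$ by $\Re\psi$ in such a tail integrability condition does not alter the supremum over $b\in(0\,,1)$.

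For $\overline{\cI}_H(T)$ and $\underline{\cI}_H(T)$, the $s$-integration again produces the factor $(1+\Re\psi(\xi))^{-1}$, and the computation hinges on the asymptotic
\[
\int_0^1\varepsilon^{-1-b}|e^{-\varepsilon z}-1|^2\,d\varepsilon\asymp |z|^b\wedge|z|^2\qquad(\Re z\ge 0,\ b\in(0\,,1)),
\]
which I would prove by splitting the $\varepsilon$-integral at $\varepsilon=1/|z|$. The upper bound uses $|e^{-\varepsilon z}-1|\le\min(\varepsilon|z|\,,2)$ (the first estimate valid because $\Re z\ge 0$), and the lower bound uses the Taylor expansion $|e^{-\varepsilon z}-1|^2=\varepsilon^2|z|^2+O(\varepsilon^3|z|^3)$ as $\varepsilon|z|\to 0$, uniformly in the direction $z/|z|$. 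Specializing $z=\psi(\xi)$ delivers the $\indH$ integrability condition $\int_{\R^d}|\psi|^b\mu(d\xi)/(1+\Re\psi)<\infty$, i.e., $b<\indH$. The supremum $\sup_{r\in[0,\varepsilon]}|e^{-r\psi(\xi)}-1|^2$ entering $\underline{\cI}_H$ has the same order $\min(\varepsilon^2|\psi(\xi)|^2\,,4)$ as $|e^{-\varepsilon\psi(\xi)}-1|^2$, so $\overline{\cI}_H(T)=\underline{\cI}_H(T)=\indH$.

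The principal obstacle will be the oscillatory behavior of $|e^{-\varepsilon\psi(\xi)}-1|^2$ in the regime $|\Im\psi(\xi)|\gg\Re\psi(\xi)$, which a priori could spoil the lower bound on the $\varepsilon$-integral. The resolution is that the non-oscillatory small-$\varepsilon$ contribution alone (where $\varepsilon|\psi(\xi)|\lesssim 1$ and $|e^{-\varepsilon\psi(\xi)}-1|^2\asymp\varepsilon^2|\psi(\xi)|^2$) already produces the claimed $|\psi(\xi)|^b$ behavior after integration against $\varepsilon^{-1-b}$, while the oscillatory tail is a bounded perturbation controlled by the integrable weight $\varepsilon^{-1-b}$ with $b<1$.
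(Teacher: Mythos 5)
Your proposal is correct and reaches the stated conclusions, but your handling of the crucial bound $\overline{\cI}_H(T)\le\indH$ is a genuinely different, and cleaner, argument than the paper's. The paper expands $|1-e^{-\varepsilon\psi}|^2=(1-e^{-\varepsilon\Re\psi})^2+2(1-\cos(\varepsilon\Im\psi))e^{-\varepsilon\Re\psi}$, lower-bounds each summand separately, restricts the oscillatory piece to the region $\{|\Im\psi|\ge\Re\psi\ge1\}$, changes variables, and recombines. You instead prove the single uniform two-sided estimate
\[
\int_0^1\varepsilon^{-1-b}\,|1-e^{-\varepsilon z}|^2\,\d\varepsilon\asymp|z|^b\wedge|z|^2\qquad(\Re z\ge0,\ b\in(0,1)),
\]
with the lower half extracted entirely from the regime $\varepsilon\lesssim1/|z|$, where $|1-e^{-\varepsilon z|}\asymp\varepsilon|z|$ uniformly in $\arg z$; phase cancellation is irrelevant there. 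After Tonelli this delivers $\overline{\cI}_H(T)=\indH$ in one stroke, and the pointwise estimate $\sup_{r\le\varepsilon}|1-e^{-rz}|\asymp\min(\varepsilon|z|,2)$ gives $\underline{\cI}_H(T)=\indH$ by the same computation. Your route avoids the paper's region analysis. (Minor caution: $|1-e^{-\varepsilon z}|^2$ itself is \emph{not} pointwise $\asymp\min(\varepsilon^2|z|^2,4)$ — oscillation can kill it for individual $\varepsilon$ — but the $\varepsilon^{-1-b}\,\d\varepsilon$-integral is, which is all you use.)

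One step should be flagged. Your final ``$\asymp$'' in the $\cI_H$ computation — the assertion that replacing $|\psi|$ by $\Re\psi$ in the tail integrability condition does not alter the supremum over $b$ — is unjustified, and is in fact false in full generality. Example \ref{ex:CP} (compound Poisson with nonzero drift $\tilde{\ba}$) has $\Re\psi$ bounded while $|\psi(\xi)|\asymp\|\xi\|$, so the $\Re\psi$-based index equals $1$ regardless of $\mu$, whereas $\indH$ can be any value in $(0\,,1)$ depending on the tails of $\mu$. What your computation (and the paper's, which is equally terse at this step) actually establishes is $\cI_H\ge\indH$. That is enough: combined with $\underline{\cI}_H(T)\ge\indH$, $\overline{\cI}_H(T)\le\indH$, and $\underline{\cI}_H(T)\le\overline{\cI}_H(T)$, one obtains $\cI_H\wedge\underline{\cI}_H(T)=\cI_H\wedge\overline{\cI}_H(T)=\indH$, which is precisely what Proposition \ref{pr:phi*f:t} and Theorem \ref{th:Holder:H} require. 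So you have not introduced a new gap relative to the paper; but if you want a self-contained proof, prove only the inequality $\cI_H\ge\indH$ and verify that this suffices downstream, rather than claiming the two replacements are equivalent.
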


\begin{proof}
	Recall the transition functions of the process $X$ from \eqref{p_t(F)} and appeal to
	the non-degeneracy condition \eqref{NoZeros} and
	the L\'evy-Khintchine formula \eqref{LKF} to see that for every $T>0$ and $\eta>0$ fixed,
	\[
		\int_0^T\d s\int_{\R^d}\mu(\d\xi) \|\xi\|^{\eta} \ |\hat{p}(s\,,\xi)|^2
		= \int_0^T\d s\int_{\R^d}\mu(\d\xi)\ \|\xi\|^{\eta}\ \e^{-2s\Re\psi(\xi)}
		\asymp \int_{\R^d}\frac{\|\xi\|^\eta\,\mu(\d\xi)}{1+\Re\psi(\xi)}.
	\]
	This proves that $\cI_R(T)=2\indL$ for all $T\ge0$.
	
	Next, we observe that if $b\in(0\,,1)$, then
	\[
		\int_0^1\frac{\d s}{s^b}\int_{\R^d}\mu(\d\xi)\ 
		| \hat{p}(s\,,\xi)|^2 = \int_{\R^d}\mu(\d\xi)
		\int_0^1\frac{\d s}{s^b}\ \e^{-2s\Re\psi(\xi)} \asymp
		\int_{\R^d}\frac{\mu(\d\xi)}{
		[1+\Re\psi(\xi)]^{1-b}},
	\]
	thanks to \eqref{NoZeros},
	the Tonelli theorem, and Lemma \ref{lem:B} of the Appendix.
	This proves that $\cI_H=\indH$. 
	
	For the next stage of the proof, we observe that for every $t>0$,
	\begin{align*}
		\mathscr{T}(t) &:=\int_0^t\d s\int_{\R^d}\mu(\d\xi)\
			\left| \hat{p}(s + r \,,\xi)-\hat{p}(s\,,\xi)\right|^2
			= \int_0^t\d s\int_{\R^d}\mu(\d\xi)\
			\left|\e^{-s\psi(\xi)}  - \e^{-(s+r)\psi(\xi)}\right|^2\\
		&=\int_0^t\d s\int_{\R^d}\mu(\d\xi)\
			\left|1 - \e^{-r\psi(\xi)}\right|^2 \e^{-2s\Re\psi(\xi)}
			\asymp\int_{\R^d}\left|1 - \e^{-r\psi(\xi)}\right|^2 \frac{\mu(\d\xi)}{1+\Re\psi(\xi)}.
	\end{align*}
	Therefore, uniformly for all $\varepsilon>0$,
	\[
		\sup_{r\in[0,\varepsilon]}\mathscr{T}(r)\lesssim
		\sup_{r\in[0,\varepsilon]}
		\int_{\R^d}\left|1 \wedge r\psi(\xi) \right|^2 \frac{\mu(\d\xi)}{1+\Re\psi(\xi)}
		= \int_{\R^d}\left|1 \wedge \varepsilon\psi(\xi) \right|^2 \frac{\mu(\d\xi)}{1+\Re\psi(\xi)}.
	\]
	This readily implies the following for every $a>0$ fixed:
	\[
		\int_0^1\frac{\d\varepsilon}{\varepsilon^{1+a}}
		\sup_{r\in[0,\varepsilon]}\mathscr{T}(r)
		\lesssim\int_{\R^d}\frac{[1+|\psi(\xi)|^a]\,\mu(\d\xi)}{1+\Re\psi(\xi)}.
	\]
	Because of Dalang's condition \eqref{Dalang}, the preceding
	implies that
	$\underline{\cI}_H(T)\ge\indH$; see \eqref{phi*f:indHSS'} and \eqref{ind:H}.
	
	Next we prove that for every $a>0$ fixed,
	\begin{equation}\label{goal:H}
		\int_0^1\frac{\d\varepsilon}{\varepsilon^{1+a}} \mathscr{T}(\varepsilon)
		\gtrsim\int_{|\psi|\ge 2}\frac{|\psi(\xi)|^a\,\mu(\d\xi)}{1+\Re\psi(\xi)}.
	\end{equation}
	Because Dalang's condition holds, this will show that 
	$\overline{\cI}_H(T)\leq\indH$, and completes the proof of the lemma.
	
	First, we note that
	for all $\xi\in\R^d$ and $r\ge0$, 
	\begin{align*}
		\left|1 - \e^{-r\psi(\xi)}\right|^2  &= \left( 1 - \e^{-r\Re\psi(\xi)}\right)^2 + 2
			\left( 1 - \cos\left[ r|\Im\psi(\xi)|\right]\right) \e^{-r\Re\psi(\xi)}\\
		&\gtrsim \left( 1\wedge r^2[\Re\psi(\xi)]^2\right) +
			\left( 1 - \cos\left[ r|\Im\psi(\xi)|\right]\right) \e^{-r\Re\psi(\xi)}.
	\end{align*}
	It follows from a few lines of computation that
	\[
		\int_0^1\frac{\d\varepsilon}{\varepsilon^{1+a}}
		\mathscr{T}(\varepsilon)
		\gtrsim\int_{\R^d}\frac{|\Re\psi(\xi)|^a\,\mu(\d\xi)}{1+\Re\psi(\xi)}
		+ \mathscr{B},
	\]
	where
	\[
		\mathscr{B} = \int_0^1\frac{\d\varepsilon}{\varepsilon^{1+a}}
		\int_{\R^d}\left( 1 - \cos\left[\varepsilon|\Im\psi(\xi)|\right]\right) 
		\e^{-\varepsilon\Re\psi(\xi)}\, \frac{\mu(\d\xi)}{1+\Re\psi(\xi)}.
	\]
	Since the integrand of the above integral is nonnegative,
	\begin{align*}
		\mathscr{B}&\ge\int_{|\Im\psi(\xi)|\ge\Re\psi(\xi)\ge1}\frac{\mu(\d\xi)}{1+\Re\psi(\xi)}
			\int_0^1\frac{\d\varepsilon}{\varepsilon^{1+a}}
			\left( 1 - \cos\left[\varepsilon|\Im\psi(\xi)|\right]\right) 
			\e^{-\varepsilon\Re\psi(\xi)}\\
		&\ge\int_{|\Im\psi(\xi)|\ge\Re\psi(\xi)\ge1}\frac{|\Im\psi(\xi)|^a\,\mu(\d\xi)}{1+\Re\psi(\xi)}
			\int_0^{|\Im\psi(\xi)|}\frac{\d s}{s^{1+a}}(1-\cos s)
			\e^{-s},
	\end{align*}
	after a change of variables $[s=\varepsilon|\Im\psi(\xi)|$]. Therefore, we replace
	$\int_0^{|\Im\psi(\xi)|}(\,\cdots)$ with $\int_0^1(\,\cdots)$ to find that
	\[
		\mathscr{B} \gtrsim\int_{|\Im\psi(\xi)|\ge\Re\psi(\xi)\ge1}
		\frac{|\psi(\xi)|^a\,\mu(\d\xi)}{1+\Re\psi(\xi)},
	\]
	and hence
	\begin{align*}
		\int_0^1\frac{\d\varepsilon}{\varepsilon^{1+a}}
			\mathscr{T}(\varepsilon) &\gtrsim
			\int_{\R^d}\frac{|\Re\psi(\xi)|^a\,\mu(\d\xi)}{1+\Re\psi(\xi)}
			+\int_{|\Im\psi(\xi)|\ge\Re\psi(\xi)\ge1}
			\frac{|\psi(\xi)|^a\,\mu(\d\xi)}{1+\Re\psi(\xi)}\\
		&\gtrsim\int_{|\psi(\xi)|\ge2}
			\frac{|\psi(\xi)|^a\,\mu(\d\xi)}{1+\Re\psi(\xi)}.
	\end{align*}
	Since $\psi$ is continuous it is locally bounded. Because $\mu$ is locally finite,
	this proves 
	that $\overline{\cI}_H(T)\le\indH$ and completes the proof (see \eqref{phi*f:indHSS'} and \eqref{ind:H}).
\end{proof}

We will also have need for the following classical fact from real analysis.

\begin{lemma}[The Paley-Zygmund inequality {\cite[Lemma $\gamma$]{PZ}}]\label{lem:PZ}
	If $V$ is a non-negative, mean-one random variable in $L^2(\Omega)$, then
	$\P \{ V \ge \lambda  \} \ge  (1-\lambda)^2/\E(V^2)$ for every $\lambda\in(0\,,1)$.
\end{lemma}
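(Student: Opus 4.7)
The plan is to use the classical one-line trick: split the first moment of $V$ according to whether $V$ is below or above the threshold $\lambda$, bound the small-$V$ contribution trivially, and then apply Cauchy--Schwarz to the large-$V$ contribution in order to introduce the second moment $\E(V^2)$ together with the probability $\P\{V\ge\lambda\}$ that we wish to bound from below.

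More concretely, first I would write $1 = \E(V) = \E(V\1_{\{V<\lambda\}}) + \E(V\1_{\{V\ge\lambda\}})$ using the mean-one assumption. On the event $\{V<\lambda\}$ one has $V<\lambda$, so $\E(V\1_{\{V<\lambda\}}) \le \lambda\P\{V<\lambda\}\le\lambda$; consequently $\E(V\1_{\{V\ge\lambda\}}) \ge 1-\lambda$.

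Next I would apply the Cauchy--Schwarz inequality to $\E(V\1_{\{V\ge\lambda\}}) = \E(V\cdot \1_{\{V\ge\lambda\}})$, which gives
\[
	\E(V\1_{\{V\ge\lambda\}}) \le \bigl\{\E(V^2)\bigr\}^{1/2}\bigl\{\P\{V\ge\lambda\}\bigr\}^{1/2}.
\]
Combining the two displays yields $(1-\lambda)^2 \le \E(V^2)\,\P\{V\ge\lambda\}$, which after rearrangement is the claim. Non-negativity of $V$ is used implicitly in the bound $\E(V\1_{\{V<\lambda\}}) \le \lambda$, and the condition $V\in L^2(\Omega)$ is needed for Cauchy--Schwarz to produce a finite right-hand side.

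There is essentially no obstacle here: the only conceptual step is recognizing that the mean-one normalization together with a threshold decomposition turns a second-moment bound into a lower bound on a tail probability via Cauchy--Schwarz. The entire argument is a few lines and requires no machinery beyond elementary integration inequalities.
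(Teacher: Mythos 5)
Your argument is the standard proof of the Paley--Zygmund inequality and it is correct: splitting $\E(V)=1$ across the event $\{V\ge\lambda\}$ and its complement, bounding the complement's contribution by $\lambda$ using non-negativity, and applying Cauchy--Schwarz to the remaining piece gives exactly $(1-\lambda)^2\le \E(V^2)\,\P\{V\ge\lambda\}$. The paper itself does not supply a proof of this lemma (it is stated with a citation to Paley and Zygmund), so there is nothing in the paper to compare against; your write-up is the classical argument one would expect.
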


We are ready for the following.

\begin{proof}[Proof of Theorem \ref{th:Holder:H}]
	First, suppose that $\indH >0$. 
	Choose and fix an arbitrary 
	number $a\in(0\,,\indH)$ and appeal to Lemma \ref{lem:ind:H}
	and Proposition \ref{pr:phi*f:t} 
	in order to see that
	\[
		\E\left( \left| H(t+\varepsilon\,,x) - H(t\,,x)\right|^2\right) \lesssim \varepsilon^{a},
			\]
	uniformly for all  $t\in[0,T]$, $\varepsilon\in(0\,,1)$, and $x\in\R^d$.
	[It might help to also recall that the law of $t\mapsto H(t\,,x)$ does not depend on $x$.]
	Because 
	\begin{equation}\label{L^n(O)}
		\E(W^n) = (n-1)!! [ \Var(W)]^{n/2}
		\qquad\text{for every even integer  $n\ge1$,}
	\end{equation}
	and for every centered normally distributed random variable $W$
	on $\R$, it follows that for every even integer $n$ fixed,
	\begin{equation}\label{moment:n}
		\adjustlimits\sup_{t\ge0}\sup_{x\in\R^d}
		\E\left( \left| H(t+\varepsilon\,,x) - H(t\,,x)\right|^n\right) 
		\lesssim\varepsilon^{na/2},
	\end{equation}
	uniformly for every $\varepsilon\in(0\,,1)$.
	Therefore, the Kolmogorov continuity theorem implies
	that $\P\{H(\cdot\,,x)\in C^\alpha_{\textit{loc}}(\R^d)\}=1$ for every $x\in\R^d$
	and $\alpha\in(0\,,a/2)$. Since $a\in(0\,,\indH)$ is otherwise arbitrary, this proves
	\eqref{H:x}.
	\smallskip
	
	We verify \eqref{H:t} next. With this in mind, let us assume that
	$\indL>0$, equivalently $\cI_R(T)>0$ (by Lemma \ref{lem:ind:H}).
	Choose and fix an arbitrary 
	number $b\in(0\,,\cI_R(T))$ and appeal to 
	Proposition \ref{pr:phi*f:x} 
	in order to see that
	\begin{equation}\label{heat:moment:0}
		\adjustlimits\sup_{t\in[0,T]}\sup_{x\in\R^d}
		\E\left( \left| H(t\,,x+h) - H(t\,,x)\right|^2\right) \lesssim \|h\|^{b},
	\end{equation}
	uniformly for every $h\in\R^d$ that satisfies $\|h\|\le1$.
	Therefore, \eqref{L^n(O)} implies that for every integer $n\ge1$,
	\begin{equation}\label{heat:moment:00}
		\adjustlimits\sup_{t\in[0,T]}\sup_{x\in\R^d}
		\E\left( \left| H(t\,,x+h) - H(t\,,x)\right|^n\right) \lesssim\|h\|^{nb/2}
		\end{equation}
	uniformly for every $h\in\R^d$ that satisfies $\|h\|\le1$.
	Our assumption that $\indL>0$ implies also that $\indH>0$; see \eqref{indR<indH}.
	Therefore, \eqref{moment:n} also holds. Consequently, for every integer $n\in\N$ 
	and all $a\in(0\,,\indH)$ and $\bar{b}\in(0\,,\indL)$, all fixed,
	\begin{equation}\label{heat:moment}
		\E\left( \left| H(t\,,x) - H(s\,,y)\right|^n\right) \lesssim
		|t-s|^{an/2}+\|x-y\|^{\bar{b}n},
	\end{equation}
	uniformly for all tuples $(t\,,x)$ and $(s\,,y)$ on a (given and fixed) bounded rectangle.
	Because $a\in(0\,,\indH)$ and $\bar{b}\in(0\,,\indL)$ are otherwise
	arbitrary, Kolmogorov's continuity theorem implies \eqref{H:t}.
	
	Next, we suppose that $\P\{ H(\cdot\,,0)\in C^\gamma_{\textit{loc}}(\R_+)\}=1$
	for some $\gamma>0$ and aim to prove that $\indH >0$. 	
	Note that
	\[
		\P\left\{\adjustlimits\lim_{\varepsilon\to0^+}\sup_{s\in(0,1)}
		\frac{|H(s+\varepsilon\,,0) - H(s\,,0)|}{\varepsilon^{\theta}}=0\right\}=1
		\qquad\text{for every $\theta\in(0\,,\gamma)$}.
	\]
	Among other things, this implies the existence of two numbers $\varepsilon_0\in(0\,,1)$
	and $K>1$ such that
	\begin{equation}\label{A0}
		\adjustlimits\inf_{\varepsilon\in(0,\varepsilon_0)}\inf_{s\in(0,1)}
		\P\left\{ |H(s+\varepsilon\,,0)-H(s\,,0)|^2 < K\varepsilon^{2\theta} \right\}>
		1 - \frac{1}{2\pi}.
	\end{equation}
	Apply Lemma \ref{lem:PZ} to the random variable 
	$V:=  |H(s+\varepsilon\,,0)-H(s\,,0)|/ \| H(s+\varepsilon\,,0)-H(s\,,0)\|_1$ to see that
	\begin{align*}
		&\P\left\{ |H(s+\varepsilon\,,0)-H(s\,,0)|  \ge \frac12
			\| H(s+\varepsilon\,,0)-H(s\,,0)\|_1 \right\}\\
		&\hskip2.4in\ge \frac{\| H(s+\varepsilon\,,0)-H(s\,,0)\|_1^2}{%
			4\| H(s+\varepsilon\,,0)-H(s\,,0)\|_2^2} = \frac{1}{2\pi},
	\end{align*}
	thanks to the elementary fact that $\E|W|=\sqrt{2/\pi}\ \E(|W|^2)$ for every centered normal
	random variable on $\R$. 
	
	Once again we apply Lemma \ref{lem:PZ}, this time with 
	$V:=  |H(s+\varepsilon\,,0)-H(s\,,0)|^2/ \| H(s+\varepsilon\,,0)-H(s\,,0)\|_2^2$ 
	and then we appeal to \eqref{L^n(O)} with $n=4$, in order to find that
	\begin{equation}\label{B}
		\P\left\{ |H(s+\varepsilon\,,0)-H(s\,,0)|^2  \ge 
		\frac{\|H(s+\varepsilon\,,0)-H(s\,,0)\|_2^2}{2\pi} \right\}
		\ge\frac{1}{2\pi}.
	\end{equation}
	
	If $E_1$ and $E_2$ are two events on the same probability 
	space and satisfy $\P(E_1)+\P(E_2)>1$,
	then $\P(E_1\cap E_2) \ge \P(E_1)+\P(E_2)-1>0$, thanks to Bonferroni's inequality. 
	We apply this fact with
	$E_1$ and $E_2$ denoting respectively the event in \eqref{A0} and \eqref{B} in order to see that
	\begin{equation}\label{C}
		\sup_{s\in(0,1)}
		\E\left( \left| H(s+\varepsilon\,,0)-H(s\,,0) \right|^2 \right) \le 2\pi K\varepsilon^{2\theta},
	\end{equation}
	uniformly for every $\varepsilon\in(0\,,\varepsilon_0)$. Because this is true for every
	$\theta\in(0\,,\gamma)$, the second half of Proposition \ref{pr:phi*f:t} 
	implies that $\cI_H\wedge\underline{\cI}_H(T)>0$. Equivalently,
	$\indH >0$, because of Lemma \ref{lem:ind:H}.
	 
	Similarly, we may use the second half of Proposition \ref{pr:phi*f:x}
	together with the Paley-Zygmund inequality (Lemma \ref{lem:PZ}) to prove that if
	$x\mapsto H(t\,,x)$ is in $C^\beta(\R^d)$ for some $\beta>0$ and $t>0$,
	then $\cI_R(T)>0$. By Lemma \ref{lem:ind:H} this is equivalent to 
	$\indL>0$. This concludes the proof of the theorem.
\end{proof}

\section{The linear wave equation}
\label{s5}

In this section we study the regularity and well-posedness of the 
stochastic wave equation:
\begin{equation}\label{SWE:G}\left[\begin{split}
	&\partial_t^2 W = \rL W  + \dot{\rf}\quad\text{on $(0\,,\infty)\times\R^d$},\\
	&\text{subject to}\quad W(0)=\partial_t W(t)=0\quad\text{on $\R^d$}.
\end{split}\right.\end{equation}
In order to be able to accomplish this goal we will restrict attention to the symmetric
case; that is, we assume that the
underlying L\'evy process $X$ has the same law as the L\'evy process $-X$.
This symmetry condition is equivalent to the following Fourier-analytic property:
\begin{equation}\label{Sym}
	\psi(\xi)=\psi(-\xi)=\Re\psi(\xi)\ge0\text{ for all $\xi\in\R^d$}.
\end{equation}
The restriction to $\cS(\R^d)$  of the generator $\rL$  of a symmetric L\'evy process is self-adjoint on $L^2(\R^d).$
Furthermore, in this case the characteristic exponent of $X$ reduces to 
\begin{equation}\label{LKF10}
	\psi(\xi) = \tfrac12\xi\cdot\bA\xi + \int_{\R^d}
	\left[1-\cos(y\cdot\xi)\right]\nu(\d y)\qquad
	\text{for all $\xi\in\R^d$}
\end{equation}
(see \eqref{LKF}).  

Throughout this section we also assume that 
\begin{equation}\label{RL}
	\lim_{\|\xi\|\to\infty}\psi(\xi)=\infty,
\end{equation}
which turns out to be a non-degeneracy condition for the law of the random vector $X(1)$.
In Remark \ref{rem5.1} below we discuss further this hypothesis.

For every $\xi\in\R^d$, define
 \[
	\hat{\varphi}(t\,,\xi) = t\sinc\left( t\sqrt{\psi(\xi)}\right)\quad [t\ge 0],
\]
where $\sinc(a)=\sin(a)/a$ for $a\neq0$ and $\sinc(0)=1$.\footnote%
{Some
	authors use instead the slightly different
	scaling $\sin(\pi a)/(\pi a)$ [$a\neq 0$] for the sinc function; see for example
	Baumann and Stenger \cite{BaumannStenger}.}	
 Thanks to \eqref{RL}  and the continuity of $\psi$ --ensured by \eqref{LKF10}--  one can check that, for any $t>0$,  the mapping $\xi\mapsto \hat{\varphi}(t\,,\xi)$ is a tempered distribution; in fact it is a pseudo measure \cite{Benedetto,Kahane}.
 
 Notice that for any fixed $\xi\in\R^d$, the function $t\mapsto \hat{\varphi}(t\,,\xi)$ satisfies the second order linear differential equation
 \[
	 \frac{\d^2}{\d t^2} \hat{\varphi}(t\,,\xi) + \psi(\xi)\hat{\varphi}(t\,,\xi) = 0,\quad t\ge 0,
 \]
subject to initial conditions
$\hat{\varphi(0,\xi)}=0$, $\frac{\d}{\d t} \hat{\varphi}(0\,,\xi)=1$.
Thus, we set
$\varphi(t\,,\cdot) := \F^{-1}\hat{\varphi}(t\,,\cdot)$, 
where $\F^{-1}$ denotes the inverse Fourier transform operator
(acting on the spatial variable of $\varphi$), 
in order to see that $\varphi\in \mathcal{S}^\prime(\R^d)$, and $\varphi$ is the fundamental solution of the linear wave operator $\partial^2_t-\rL$. 

Referring to Section \ref{sec:conv}, the random-field solution to the wave 
equation \eqref{SWE:G} is the Gaussian process defined by the stochastic convolution 
$\varphi*\rf=\{(\varphi*\rf)(t\,,x);\, t\ge0,x\in\R^d\}$. 
This process is well defined if and only if $\varphi\in \mathcal{K}$.
Since \eqref{RL} implies that $\{\psi<1\}$ is bounded, in the context of this section, it follows that
\[
	\varphi\in\cK
	\quad\text{if and only if}\quad
	\int_0^T t^2\,\d t\int_{\psi(\xi)\ge 1}\mu(\d\xi)
	\left[\sinc\left( t\sqrt{\psi(\xi)}\right)\right]^2 <\infty\quad\text{for all $T>0$}.
\]
From the identity
$\int_0^c \sin^2(x)\,\d x =\frac12 c [1 - \tfrac12\sinc(2c)]$, valid for every $c>0$, we deduce
\begin{equation}\label{int:sinc}
	\int_0^T t^2[\sinc(tK)]^2\,\d t = 
	\frac {T}{2K^2}\left[ 1-\sinc(2KT)\right]
	\asymp\frac{T}{1+K^2}, 	
	\end{equation}
uniformly for all $K,T\ge0$ such that $KT\ge1$. Consequently, $\varphi\in\cK$ if and only if Dalang's condition \eqref{Dalang} holds; because of \eqref{Sym} it reads
\[
\int_{\R^d} \frac{\mu(\d\xi)}{1+\psi(\xi)} < \infty.
\]
Remember that we are assuming \eqref{Dalang} throughout the paper.
  \smallskip

Condition  \eqref{RL} has relations to other parts of mathematics, and is
worthy of inspection in its own right. Therefore, we pause to include some remarks about \eqref{RL}.
Throughout the following remarks we make extensive references to \eqref{p_t(F)}.
Also, we recall that a finite Borel measure on $\R^d$ is \emph{Rajchman}
if its Fourier transform vanishes at infinity \cite{Lyons}. Thanks to
the L\'evy-Khintchine formula \eqref{LKF10},
we can interpret \eqref{RL} as saying that $p(t)$
is a Rajchman measure for some $t>0$. 

\begin{remark}
\label{rem5.1}
	\begin{compactenum}
		\item If $p(t)$ is absolutely continuous for some $t>0$ then \eqref{LKF10}
			implies \eqref{RL} thanks to the Riemann-Lebesgue lemma. In particular, 
			if $\exp\{-t\psi\}\in L^1(\R^d)$ for some $t>0$, then \eqref{RL} holds;
		\item In the case that $p(t)$ is radially symmetric for all $t>0$ and 
			$d\ge 2$, Zabczyk \cite[p.\ 245]{Zabczyk} 
			has proven that \eqref{RL} holds iff every excessive function of $X$ 
			is lower semicontinuous. Moreover \cite[Example (4.6)]{Zabczyk}, either $X$ is Poisson, or  
			$p(t\,,\d x)\ll\d x$, in which case
			\eqref{RL} holds automatically.
		\item If $\bA$ is non singular, 
			then $\psi(\xi)\gtrsim \|\xi\|^2$ uniformly
			for all $\xi\in\R^d$, whence  \eqref{RL} holds.
		\item Since $\psi\le 2\nu(\R^d)$ on the null space of $\bA$ [see \eqref{LKF10}] 
			if $\bA$ is singular and $\nu(\R^d)<\infty$, then \eqref{RL} fails.
		\item Suppose the absolutely continuous part of $\nu$ is infinite; that is, 
			$h=\d\nu/\d x\not\in L^1(\R^d)$. We can deduce from \eqref{LKF10} that
			$\psi(\xi) \ge \int_{S(\delta)}[1-\cos(y\cdot\xi)]h(y)\,\d y$ 
			for every $\xi\in\R^d$ and $\delta>0$,
			where $S(\delta)=\{y\in\R^d:\,\|y\|>\delta\}$.
			Thus, $\liminf_{\|\xi\|\to\infty}\psi(\xi)
			\ge \lim_{\delta\to0}\int_{S(\delta)}h=\infty$ by the Riemann-Lebesgue lemma, 
			and hence \eqref{RL} fails.
	\end{compactenum}
\end{remark}

We are ready to study the regularity
properties of the solution, assuming \eqref{Dalang}.
It might help to first recall \eqref{indR<indH}.

\begin{theorem}\label{th:Holder:W}

1.\ If $\indH>0$, then
\begin{equation}
\label{Holder:W-1}
	\P\left\{W(\cdot\,,x)\in C^\alpha_{\textit{loc}}(\R^d)\right\}=1 \
	\text{for every}\ x\in\R^d\ {\text and} \ 0< \alpha< \tfrac12\wedge\indH;
	\end{equation}
	and if $\indL>0$, then 
\begin{equation}
	\label{Holder:W-2}
	\P\left\{W\in C^{\alpha,\beta}_{\textit{loc}}(\R_+\times\R^d)\right\} =1\
	\text{for every}\  0< \alpha< \tfrac12\wedge\indH\ {\text and}\  0<\beta<\indL.
	\end{equation}
	
2.\ If $\indH=0$, then $t\mapsto W(t\,,0)$ is not H\"older continuous;
	and if $\indL=0$, then $x\mapsto W(t\,,x)$ is not H\"older continuous for every $t>0$.
\end{theorem}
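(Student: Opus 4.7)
The strategy parallels the proof of Theorem \ref{th:Holder:H}, with the wave kernel $\hat\varphi(t,\xi)=t\,\sinc(t\sqrt{\psi(\xi)})$ playing the role of the heat kernel. The first step will be to prove a wave-equation analog of Lemma \ref{lem:ind:H} identifying the four indices from Section \ref{sec:conv}: namely, one expects
\[
\cI_R(T)=2\indL,\qquad \cI_H=1,\qquad \underline{\cI}_H(T)=\overline{\cI}_H(T)=1\wedge 2\indH .
\]
For $\cI_R(T)$ and $\cI_H$, the identity \eqref{int:sinc}, the symmetry \eqref{Sym}, Tonelli, and Dalang's condition reduce matters to the computation $\int_0^T s^2[\sinc(s\sqrt{\psi(\xi)})]^2\,\d s\asymp T/(1+\psi(\xi))$, with bounded contributions from the set $\{\psi\le T^{-2}\}$ which is compact thanks to \eqref{RL}. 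For the remaining two indices one would use the sum-to-product identity
\[
\hat\varphi(s+r,\xi)-\hat\varphi(s,\xi)=\frac{2\cos((s+r/2)\sqrt{\psi(\xi)})\sin(r\sqrt{\psi(\xi)}/2)}{\sqrt{\psi(\xi)}},
\]
combined with $\int_0^T\cos^2(\,\cdots)\,\d s\asymp T$, to reduce the $\overline\cI_H,\underline\cI_H$ integrals to $\int_{\R^d}(1\wedge r^2\psi(\xi))/[1+\psi(\xi)]\,\mu(\d\xi)$, and then the routine $\varepsilon^{-1-a}\,\d\varepsilon$-integration in $\varepsilon$ would yield the displayed identification as in the heat case.

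Given this lemma, the sufficiency statements \eqref{Holder:W-1} and \eqref{Holder:W-2} follow in complete analogy with the heat case: Propositions \ref{pr:phi*f:x} and \ref{pr:phi*f:t} provide $L^2$-bounds on the spatial and temporal increments of $W=\varphi*\rf$; the Gaussian moment identity \eqref{L^n(O)} upgrades these to $L^n$-bounds for every even $n$; and Kolmogorov's continuity theorem delivers the claimed local H\"older regularity. The cap $\tfrac12\wedge\indH$ on the temporal exponent will appear for free because $\underline{\cI}_H(T)=1\wedge 2\indH\le 1$ and because Proposition \ref{pr:phi*f:t} produces an exponent that is half of $\cI_H\wedge\underline{\cI}_H(T)$.

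For part $2$, I would repeat the necessity argument used at the end of the proof of Theorem \ref{th:Holder:H}: the Kallianpur zero-one law \eqref{0-1:law} (which applies verbatim to the Gaussian field $W$), combined with the Paley-Zygmund inequality (Lemma \ref{lem:PZ}) and Bonferroni as in \eqref{A0}--\eqref{C}, converts any assumed almost-sure local H\"older exponent into an $L^2$-upper bound on the corresponding increments. Invoking the second halves of Propositions \ref{pr:phi*f:x} and \ref{pr:phi*f:t} together with the new index identification then forces $\indL>0$ (respectively $\indH>0$), contradicting the hypotheses.

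The main obstacle will be establishing $\underline{\cI}_H(T)=\overline{\cI}_H(T)=1\wedge 2\indH$: one must carefully use the oscillation of the cosine factor in order to decouple the $s$- and $\xi$-behavior, and this is precisely the point where the Rajchman-type hypothesis \eqref{RL} enters, as it ensures that the lower-order correction in $\int_0^T\cos^2(\,\cdots)\,\d s=T/2+O(\psi^{-1/2})$ is integrable against $\mu$ on the complement of a compact set. The remainder of the argument is mechanical once this quantitative identification is in hand.
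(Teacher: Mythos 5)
Your proposal is essentially correct and takes the same overall route as the paper: identify the regularity indices from Section~\ref{sec:conv} for the wave kernel, then invoke Propositions~\ref{pr:phi*f:x} and~\ref{pr:phi*f:t}, the Gaussian moment identity \eqref{L^n(O)}, Kolmogorov's continuity theorem, the Kallianpur zero-one law, and the Paley--Zygmund/Bonferroni argument exactly as in the proof of Theorem~\ref{th:Holder:H}. Indeed the paper explicitly reduces Theorem~\ref{th:Holder:W} to the index computation (Lemma~\ref{lem:ind:W}) and states that the rest is identical to the heat case, so the only place where you can genuinely differ is in how Lemma~\ref{lem:ind:W} is established.

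There you do take a different path for the two-sided estimate of $\overline{\cI}_H(T)$ and $\underline{\cI}_H(T)$. The paper's upper bound uses the elementary inequality $|\sin x - \sin y|\le 2\wedge|x-y|$, and for the lower bound performs the change of variables $u=s\sqrt{\psi(\xi)}$, $v=\varepsilon\sqrt{\psi(\xi)}$, then uses the Taylor bound $\sin(u+v)-\sin(u)\ge v/\sqrt{2}$ on the intervals $J_n=[2n\pi,\,2n\pi+\tfrac{\pi}{8}]$ and counts how many such intervals fit in $[0,t\sqrt{\psi(\xi)}]$. You instead use the sum-to-product identity $\hat\varphi(s+r,\xi)-\hat\varphi(s,\xi)=2\cos\bigl((s+\tfrac r2)\sqrt{\psi(\xi)}\bigr)\sin\bigl(\tfrac r2\sqrt{\psi(\xi)}\bigr)/\sqrt{\psi(\xi)}$, which cleanly separates the $s$- and $r$-dependence, and then $\int_0^T\cos^2\bigl((s+\tfrac r2)\sqrt{\psi(\xi)}\bigr)\,\d s = T/2 + O\bigl(\psi(\xi)^{-1/2}\bigr)\asymp T$ on the region $\psi(\xi)\gtrsim T^{-2}$. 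This is a slightly slicker and more structural way to obtain the same two-sided estimate; like the paper's version, it requires condition~\eqref{RL} to control the compact exceptional set where $\psi$ is small. Both routes reduce the index integral to $\int_{\R^d}(1\wedge r^2\psi(\xi))[1+\psi(\xi)]^{-1}\mu(\d\xi)$ and then to the quantity defining $\indH$ via the $\varepsilon^{-1-b}\,\d\varepsilon$ integration, as you describe. One small discrepancy: you state $\underline{\cI}_H(T)=\overline{\cI}_H(T)=1\wedge 2\indH$ while Lemma~\ref{lem:ind:W} of the paper writes $2\indH$; your version is the more careful one given that the definition \eqref{phi*f:indHSS'} takes a supremum over $b\in(0,1)$, but the discrepancy is harmless since after taking the minimum with $\cI_H=1$ both produce the temporal H\"older exponent $\tfrac12\wedge\indH$ that appears in the theorem.
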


Theorem \ref{th:Holder:W} is proved using the same argument as 
Theorem \ref{th:Holder:H} was, except the following index computation
replaces the role of Lemma \ref{lem:ind:H}.

\begin{lemma}\label{lem:ind:W}
	$\cI_R(T) = 2 \indL$ for all $T>0$, $\cI_H=1$, and
	$\underline{\cI}_H(T)=\overline{\cI}_H(T)=2\indH$.
\end{lemma}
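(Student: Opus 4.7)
The plan is to compute each of the four indices explicitly, by substituting the form $\hat{\varphi}(t,\xi) = t\sinc(t\sqrt{\psi(\xi)}) = \sin(t\sqrt{\psi(\xi)})/\sqrt{\psi(\xi)}$ into the defining integrals \eqref{phi*f:indR} and \eqref{phi*f:indHSS'} and performing an appropriate change of variables in each case. The essential tool throughout is the elementary uniform estimate
\[
	\int_0^T t^2\left[\sinc(tK)\right]^2\,\d t \;\asymp\; \frac{1}{1+K^2}\qquad\text{uniformly for all $K\ge 0$},
\]
which follows from \eqref{int:sinc} in the excerpt together with the trivial bound $t^2\sinc^2(tK)\le t^2$ valid for $tK\le1$; here $T>0$ is fixed and the implicit constants depend on $T$. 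Substituting $K=\sqrt{\psi(\xi)}$ and applying Tonelli, we immediately obtain
$
	\int_0^T\d s\int_{\R^d}\|\xi\|^\eta|\hat{\varphi}(s,\xi)|^2\,\mu(\d\xi) \asymp \int_{\R^d}\|\xi\|^\eta\,\mu(\d\xi)/(1+\psi(\xi))
$,
which upon comparison with \eqref{ind:L} yields $\cI_R(T)=2\indL$. The bound $\cI_H=1$ is likewise immediate: the same change of variables shows $\int_0^1 s^{2-b}\sinc^2(s\sqrt\psi)\,\d s \lesssim 1/(1+\psi)$ uniformly in $b\in(0,1)$, so Dalang's condition \eqref{Dalang} forces the defining integral of $\cI_H$ to be finite for every $b<1$.

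For the two time-increment indices, I would first establish the trigonometric identity
\[
	\hat{\varphi}(s+\varepsilon,\xi)-\hat{\varphi}(s,\xi)
	= \frac{2\cos\bigl((s+\tfrac{\varepsilon}{2})\sqrt{\psi(\xi)}\bigr)\sin\bigl(\tfrac{\varepsilon}{2}\sqrt{\psi(\xi)}\bigr)}{\sqrt{\psi(\xi)}}
\]
(valid for $\psi(\xi)>0$; the measure $\mu$ assigns no mass to $\{\psi=0\}=\{0\}$ unless it has an atom at the origin, which contributes a bounded term that can be absorbed). Squaring and integrating the $\cos^2$ factor over $s\in[0,T]$ yields $T/2+O(1/\sqrt{\psi})$ for $\sqrt{\psi}\ge 1/T$ and $\asymp T$ for $\sqrt{\psi}<1/T$, so this factor is $\asymp 1$ uniformly. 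Therefore
\[
	\int_0^T\left|\hat{\varphi}(s+\varepsilon,\xi)-\hat{\varphi}(s,\xi)\right|^2\d s \;\asymp\; \frac{\sin^2(\varepsilon\sqrt{\psi}/2)}{\psi(\xi)}.
\]
Plugging this into the definition of $\overline{\cI}_H(T)$ and substituting $v=\varepsilon\sqrt{\psi}/2$ in the resulting $\varepsilon$-integral gives
\[
	\int_0^1\frac{\d\varepsilon}{\varepsilon^{1+b}}\cdot\frac{\sin^2(\varepsilon\sqrt{\psi}/2)}{\psi} \;\asymp\; \frac{\psi^{b/2}}{1+\psi}\;\asymp\;\frac{1}{(1+\psi)^{1-b/2}},
\]
where the first $\asymp$ uses that $\int_0^K v^{-1-b}\sin^2 v\,\d v$ is bounded and bounded below by a positive constant for $K\ge1$ (by the average behavior of $\sin^2$), and behaves like $K^{2-b}$ for $K\le1$. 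Comparing with \eqref{ind:H} via the substitution $a=b/2$ yields $\overline{\cI}_H(T)=2\indH$.

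For $\underline{\cI}_H(T)$, the same computation works with $\sin^2(\varepsilon\sqrt{\psi}/2)$ replaced by $\sup_{r\in[0,\varepsilon]}\sin^2(r\sqrt{\psi}/2)\asymp \min(1,\varepsilon^2\psi(\xi))$. This envelope still satisfies the same estimate after the change of variables, producing $\underline{\cI}_H(T)=2\indH$ as well. Since $\underline{\cI}_H(T)\le\overline{\cI}_H(T)$ automatically by \eqref{phi*f:indHSS'}, the two indices must coincide with $2\indH$. The main technical obstacle is establishing the \emph{lower} bound $\int_0^1\varepsilon^{-1-b}\sin^2(\varepsilon\sqrt{\psi}/2)\d\varepsilon\gtrsim \psi^{b/2}$ for large $\psi$ uniformly in $\xi$, since $\sin^2$ vanishes periodically; the averaged behavior of $v^{-1-b}\sin^2 v$ over $[0,\infty)$ makes this work, but one must verify the lower bound carefully rather than cite the trivial upper bound.
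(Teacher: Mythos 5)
Your proposal is correct, and Stages 1 and 2 (the computations of $\cI_R(T)$ and $\cI_H$) follow essentially the same lines as the paper. Where you diverge is the treatment of $\underline{\cI}_H(T)$ and $\overline{\cI}_H(T)$, and your route is genuinely different and arguably cleaner. The paper proves the upper bound $\underline{\cI}_H(T)\ge 2\indH$ from the crude inequality $|\sin x - \sin y|\le 2\wedge|x-y|$ (just as you do), but for the complementary bound $\overline{\cI}_H(T)\le 2\indH$ it works directly with $|\sin((s+\varepsilon)\sqrt\psi)-\sin(s\sqrt\psi)|$, applies Taylor's theorem to get $\sin(u+v)-\sin(u)\ge v/\sqrt2$ on a union of arithmetic-progression intervals $J_n=[2n\pi, 2n\pi+\pi/8]$, and then counts how many such $J_n$ fit into $[0,t\sqrt\psi]$. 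Your approach instead invokes the exact prosthaphaeresis identity $\sin A-\sin B = 2\cos\bigl(\tfrac{A+B}{2}\bigr)\sin\bigl(\tfrac{A-B}{2}\bigr)$, integrates the $\cos^2$ factor over $s$ in closed form, and then substitutes $v=\varepsilon\sqrt\psi/2$ to reduce everything to the single fixed integral $\int v^{-1-b}\sin^2 v\,\d v$. This avoids the interval-counting step entirely and exposes the $\psi^{b/2}$ scaling transparently. What the paper's argument buys in exchange is a more elementary, term-by-term lower bound that generalizes readily when an exact trigonometric identity is not available.

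One remark on the ``main technical obstacle'' you flag at the end: it is not actually an obstacle, given the change of variables you have already performed. Once $v=\varepsilon\sqrt\psi/2$, you get
\[
	\int_0^1\frac{\sin^2(\varepsilon\sqrt\psi/2)}{\varepsilon^{1+b}}\,\d\varepsilon
	= \frac{\psi^{b/2}}{2^b}\int_0^{\sqrt\psi/2}\frac{\sin^2 v}{v^{1+b}}\,\d v
	\ge \frac{\psi^{b/2}}{2^b}\int_0^{1}\frac{\sin^2 v}{v^{1+b}}\,\d v
\]
as soon as $\psi\ge4$, and the last integral is a strictly positive constant for each fixed $b\in(0\,,2)$; there is no periodic cancellation to worry about because the integrand is nonnegative, and the finite lower limit of integration does not touch the oscillatory regime. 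Two smaller points worth tightening in a writeup: your claim that $\int_0^T\cos^2\bigl((s+\varepsilon/2)\sqrt\psi\bigr)\,\d s\asymp 1$ uniformly in $\varepsilon$ is only needed for $\psi$ large (say $\sqrt\psi\ge 2/T$, where the exact evaluation gives $T/2+O(1/\sqrt\psi)\in[T/4,3T/4]$), and you should say explicitly that the compactness of $\{\psi\le C\}$ guaranteed by \eqref{RL} makes the low-$\psi$ contribution finite and irrelevant to the index; and the $\lesssim$ in your $\cI_H=1$ computation has a $b$-dependent constant (as $b\uparrow1$), which is fine for the index but not ``uniform in $b$'' as stated.
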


\begin{proof}
	We proceed in stages.
	
	\emph{Stage 1.} Because $|\sinc(x)|\le 1\wedge |x|^{-1}$ for all $x\in\R\setminus\{0\}$ 
	and $1\wedge z^{-1}\asymp (1+z)^{-1}$ uniformly for all $z\in\R_+$,
	\begin{equation}
	\label{lem:ind:W-1}
		\int_0^T\d s\int_{\R^d}\mu(\d\xi)\ |\hat{\varphi}(s\,,\xi)|^2\|\xi\|^{2a}
		\le \int_0^Ts^2\,\d s\int_{\R^d}\mu(\d\xi)\left(
		1\wedge \frac{1}{\psi(\xi)}\right)\|\xi\|^{2a}
		\asymp\int_{\R^d}\frac{\|\xi\|^{2a}\,\mu(\d\xi)}{1+\psi(\xi)}.
	\end{equation}
	This proves that $\cI_R(T)\ge2\indL$ for all $T>0$.
	
	The first inequality in \eqref{lem:ind:W-1} can be reversed. Indeed, by Tonelli's theorem and \eqref{int:sinc},
	\begin{equation*}
		\int_0^T\d s\int_{\R^d}\mu(\d\xi)\ |\hat{\varphi}(s\,,\xi)|^2\|\xi\|^{2a}
		\gtrsim \int_{\psi\ge1/T^2} \,\frac{\|\xi\|^{2a}\mu(\d\xi)}{1+\psi(\xi)}
		\qquad\text{uniformly for all $T>0$}.
	\end{equation*}
	 The condition \eqref{RL} ensures that $\{\xi\in\R^d:\ \psi(\xi)\le C\}$ is compact
	 for every choice of $C>0$. Hence, 
	$\int_{\psi\le1/T^2} \|\xi\|^{2a}(1+\psi(\xi))^{-1}\,\mu(\d\xi)$ is finite for every $a>0$. Therefore,
	the preceding two displays
	show that, for any $a,T>0$,
	\[
		\int_0^T\d s\int_{\R^d}\mu(\d\xi)\ |\hat{\varphi}(s\,,\xi)|^2\|\xi\|^{2a}<\infty
		\quad\text{if and only if}\quad
		\int_{\R^d}\frac{\|\xi\|^{2a}\mu(\d\xi)}{1+\psi(\xi)}<\infty.
	\]
	This is more than enough to show that $\cI_R(T)\le 2\indL$ for all $T>0$, which in turn proves
	that $\cI_{R}(T)= 2\indL$ for all $T>0$, thanks to the first portion of the proof.
		
	\emph{Stage 2.}
	Similarly as in \eqref{lem:ind:W-1}, for every $b\in(0\,,3)$,
	\[
		\int_0^1\frac{\d s}{s^b}\int_{\R^d}\mu(\d\xi)\ 
		| \hat{\varphi}(s\,,\xi)|^2 \le\int_0^1 \d s\ s^{2-b}\,\int_{\R^d}\mu(\d\xi)
		\left( 1\wedge \frac{1}{\psi(\xi)}\right)
		\asymp\int_{\R^d}\frac{\mu(\d\xi)}{1+\psi(\xi)}.
	\]
	We are assuming that $\int_{\R^d}(1+\psi(\xi))^{-2} \mu(\d\xi)<\infty$. 
	Thus, we see from the definition of $\cI_H$ (see \eqref{phi*f:indHSS'}) 
	that $b\le \cI_H\le 1$ for every $b\in(0\,,1)$. It follows that $\cI_H=1$.
	
	\emph{Stage 3.}
	Next we observe that $|\sin x - \sin y|\le 2\wedge|y-x|$ for all $x,y\in\R$, and hence
	\begin{align*}
		&\int_0^1\frac{\d\varepsilon}{\varepsilon^{1+b}}\sup_{r\in[0,\varepsilon]}
			\int_0^t\d s\int_{\R^d}\mu(\d\xi)\
			\left| \hat{\varphi}(s + r \,,\xi)-\hat{\varphi}(s\,,\xi)\right|^2\\
		&\lesssim\int_{\R^d}\frac{\mu(\d\xi)}{\psi(\xi)}
			\int_0^1\frac{\d\varepsilon}{\varepsilon^{1+b}}
			\left(1\wedge\varepsilon^2\psi(\xi)\right)\\
			&=  \frac{\mu\{\psi\le1\}}{2-b} + 
			\left( \frac{1}{2-b} + \frac1b\right)
			\int_{\psi>1} \frac{\mu(\d\xi)}{[\psi(\xi)]^{1-(b/2)}}-
			\frac{1}{b}\int_{\psi>1} \frac{\mu(\d\xi)}{\psi(\xi)}\\
		&\asymp \int_{\R^d}\frac{|\psi(\xi)|^{b/2}\,\mu(\d\xi)}{1+\psi(\xi)}\qquad
			\text{for every fixed $b\in(0\,,2)$}.
	\end{align*}
	This proves that $\underline{\cI}_H(T)\ge2\indH$.

 For the complementary bound we notice that
	for every $b,t>0$,
	\begin{align*}
		\mathscr{C}&:=\int_0^1\frac{\d\varepsilon}{\varepsilon^{1+b}}
			\int_0^t\d s\int_{\R^d}\mu(\d\xi)\
			\left| \hat{\varphi}(s + \varepsilon \,,\xi)-\hat{\varphi}(s\,,\xi)\right|^2\\
		&\gtrsim\int_{\psi\ge 16 K(t)}\frac{\mu(\d\xi)}{1+\psi(\xi)}\int_0^t \d s
			\int_0^1\frac{\d\varepsilon}{\varepsilon^{1+b}}
			\left| \sin\left((s + \varepsilon )\sqrt{\psi(\xi)}\right)
			- \sin\left(s\sqrt{\psi(\xi)}\right)\right|^2\\
		&\ge\int_{\psi\ge 16 K(t)}\frac{[\psi(\xi)]^{(b-1)/2}\,\mu(\d\xi)}{1+\psi(\xi)}
			\int_0^{t\sqrt{\psi(\xi)}}\d u
			\int_0^{\sqrt{\psi(\xi)}}\frac{\d v}{v^{1+b}}
			\left| \sin(u+v) - \sin(u)\right|^2,
	\end{align*}
	where $K(t) := (\pi/t)^2 \vee 1.$
	
	By Taylor's theorem, $\sin(u+v) - \sin(u) \ge v/\sqrt 2$ uniformly for all
	\[
		v\in\left[0\,,\frac\pi8\right]
		\quad\text{and}\quad
		u\in \bigcup_{n=0}^\infty J_n
		\text{ where }J_n=\left[2 n\pi \,, 2n\pi + \frac\pi8\right].
	\]
	Therefore, for every $b,t>0$,
	\begin{align*}
		\mathscr{C}&\gtrsim\int_{\psi\ge 16K(t)}
			\frac{[\psi(\xi)]^{(b-1)/2}\,\mu(\d\xi)}{1+\psi(\xi)}
			\hskip-.3in\sum_{\substack{n\in\Z_+:\\ 2\pi n + (\pi/8)\le t\sqrt{\psi(\xi)}}}
			\int_{J_n}\d u
			\int_0^{\pi/8}\frac{\d v}{v^{1+b}}
			\left| \sin(u+v) - \sin(u)\right|^2\\
		&\gtrsim\int_{\psi\ge 16K(t) }\frac{[\psi(\xi)]^{(b-1)/2}}{1+\psi(\xi)}
			\max\left\{n\in\Z_+:\, 2\pi n + \frac\pi8\le t\sqrt{\psi(\xi)}\right\}\mu(\d\xi).
	\end{align*}
	Because of the choice of $K(t)$,   we have
	$t\sqrt{\psi(\xi)}\ge 4t\sqrt{K(t)}\ge4\pi$
	on the set $\{\xi\in\R^d:\, \psi(\xi)\ge 16 K(t)\}$; therefore,
	\begin{align*}
		\max\left\{n\in\Z_+:\, 2\pi n + \frac\pi8\le t\sqrt{\psi(\xi)}\right\}
		&\ge \max\left\{n\in\Z_+:\, n \le \frac{15t}{32\pi}  \sqrt{\psi(\xi)}\right\}\\
		&\ge \frac{15t}{32\pi} \sqrt{\psi(\xi)}-1\ge \frac{7t}{32\pi} \sqrt{\psi(\xi)}.
	\end{align*}
	Thus, for every fixed $b,t>0$,
	\[
		\mathscr{C} \gtrsim\int_{\psi\ge 16K(t)} [\psi(\xi)]^{b/2}[1+\psi(\xi)]^{-1}\,\mu(\d\xi).
	\]
	Because \eqref{RL} implies that $\int_{\psi\le 16 K(t) }[\psi(\xi)]^{b/2}[1+\psi(\xi)]^{-1}\,\mu(\d\xi)<\infty$,
	 it follows that
	\[
		\mathscr{C}<\infty\quad \Longrightarrow \quad
		\int_{\R^d} [\psi(\xi)]^{b/2} [1+\psi(\xi)]^{-1}\,\mu(\d\xi) <\infty.
	\]
	This shows that $\overline{\cI}_H(T)\le2\indH$ (see \eqref{ind:H} and \eqref{phi*f:indHSS'}), 
	and completes the proof.
\end{proof}

\section{Nonlinear SPDEs}
\label{s6}

This section is about extending the results of Section \ref{s4} to the stochastic heat equation with a nonlinear drift term. In the last part, a brief discussion on the difficulties for a similar extension concerning the wave equation is included.

Consider the SPDE  \eqref{SHE}.
We suppose that the function $g$,
 defined on $\R_+\times \R^d\times\R\times \Omega$ with values in $\R$,
is $\mathcal{B}_{\R_+\times \R^d\times \R}\otimes \mathcal{F}$-measurable 
and adapted to the natural filtration $(\mathcal{F}_t)_{ t\ge 0}$ associated to the noise $\dot F$.
That is, for every 
number fixed $t\ge 0$, $(x\,,z\,;\omega)\mapsto g(t\,,x\,,z\,;\omega)$ 
is $\mathcal{B}_{\R^d\times \R}\otimes \mathcal{F}_t$-measurable. Furthermore, we fix $T>0$ and assume the following:\\

\noindent{\em Global Lipschitz continuity.} There exists a constant $c_1(T)>0$ such that for all 
$(t\,,x\,;\omega)\in [0\,,T]\times \R^d\times \Omega$ and $z_1, z_2\in \R$,
\[
	|g(t\,,x\,,z_1\,;\omega) - g(t\,,x\,,z_2\,;\omega)| \le c_1(T) |z_1 - z_2|.
\]
\noindent{\em Uniform linear growth.} There exists a constant $c_2(T)>0$ such that for all 
$(t\,,x\,;\omega)\in  [0\,,T]\times \R^d\times \Omega$ and $z\in\R$,
\[
	|g(t\,,x\,,z\,;\omega)|\le c_2(T) (1+|z|).
\]
In the sequel, we write $c_T = c_1(T)\vee c_2(T)$.

As regards the initial condition, we assume that $u_0=\{u_0(x);\ x\in\R^d\}$ is a 
random field that is independent of the noise $\dot F$, and satisfies some assumptions made explicit in the specific statements.

As per the theory of Walsh \cite{Walsh},
the random-field solution to \eqref{SHE}  is defined as the unique solution
to the following integral equation:  For any $t>0$ and $x\in\R^d$,
\begin{equation}\label{mild:heat}
	u(t\,,x) = (p(t)*u_0)(-x) + \int_0^t\d s\int_{\R^d}p(t-s\,,\d y)\
	g(s\,,y\,,\,u(s\,,y-x)) + H(t\,,x),\
\end{equation}
a.s., where 
$p(t\,,\cdot) = \P\{X(t)\in\cdot\,\}$ -- see \eqref{p_t(F)} -- and 
$H(t\,,x) = (p*F)(t\,,x)$ is the random-field solution to \eqref{SHE:G}.
 It might help to also recall that the first term on the right-hand side
-- that is $(p(t)*u_0)(-x)=\E (u_0(x+X(t))$ -- defines
the semigroup of $X$.

If the condition $\sup_{x\in\R^d}\E(|u_0(x)|^2)< \infty$ holds, 
then we may apply a well-known fixed point argument to prove 
the existence of a jointly measurable and adapted process 
$\{u(t\,,x);\,t\in[0\,,T]\times \R^d\}$ that satisfies \eqref{mild:heat} for every
$(t\,,x)\in[0\,,T]\times \R^d$ a.s., and
\begin{equation}\label{integrability}
	\sup_{t\in[0,T]}\sup_{x\in\R^d} \|u(t\,,x)\|_2<\infty.
\end{equation}
Moreover, if  $\sup_{x\in\R^d}\E(|u_0(x)|^{k})< \infty$ for some $k\ge 2$, then \eqref{integrability} holds with the norm $\|\cdot\|_2$ there replaced by $\|\cdot\|_{k}$ (see e.g. \cite{Dalang} for an approach to the proof).
\smallskip

Suppose first that there exists a non-random constant $a_0\in\R$ such that
$g(z)=a_0$ for all $z\in\R$. Then, \eqref{mild:heat}
yields $u(t\,,x) = (p(t)*u_0)(-x) + a_0t + H(t\,,x)$.
It now follows from Proposition \ref{pr:heat} that $u$ is locally H\"older continuous iff $H$ is. 
Consequently, Theorem \ref{th:Holder:H} tells us that $u$ is locally H\"older continuous iff $\indH>0$
when $g$ is constant. This proves half of Theorem \ref{th:heat}. Thanks to the Kolmogorov continuity theorem,
the following two propositions (Propositions \ref{pr:Var:u:t} and \ref{pr:Var:u:x}) together imply 
the sufficiency of the condition $\indH>0$ in the general case, and complete the proof
of Theorem \ref{th:heat}. 
\smallskip

\begin{lemma}\label{lem:Var:u:t}
	Let $T>0$ and $k\ge 2$, and assume that
	\begin{equation}\label{lem:Var:u:t-1}
		K(T):=\sup_{t\in[0,T]}\sup_{x\in\R^d} \|u(t\,,x)\|_k<\infty.
	\end{equation}
	Set
	\[
		I(t\,,x)= \int_0^t\d s\int_{\R^d}p(t-s\,,\d y)\ g(s\,,y\,,u(s\,,y-x))
		\qquad\text{for every $t\ge 0$ and $x\in\R^d.$}
	\]
	Then, there exists a constant $C_T>0$ such that for every $t\in(0\,,T]$ and  $\varepsilon\ge 0$, 
	\begin{equation}\label{lem:Var:u:t-2}
		\sup_{x\in\R^d} \|I(t+\varepsilon\,,x)-I(t\,,x)\|_{k} \le 
		C_T \left(\varepsilon +\int_0^t \sup_{y\in\R^d} \|u(s+\varepsilon\,,y) - u(s\,,y)\|_k\ \d s\right).
	\end{equation}
\end{lemma}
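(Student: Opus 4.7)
The approach will be to rewrite both $I(t+\varepsilon,x)$ and $I(t,x)$ after the change of variable $s'=t+\varepsilon-s$ (respectively $s'=t-s$), so that the kernel against which we integrate, $p(s',\d y)$, becomes the same in both terms and the time difference is transferred from the L\'evy measure onto $u$ and $g$. One obtains
\[
I(t+\varepsilon,x) = \int_0^{t+\varepsilon}\d s'\int_{\R^d} p(s',\d y)\, g\bigl(t+\varepsilon-s',y,u(t+\varepsilon-s',y-x)\bigr),
\]
and the analogous formula for $I(t,x)$. Splitting $[0,t+\varepsilon]=[0,t]\cup[t,t+\varepsilon]$ then yields $I(t+\varepsilon,x)-I(t,x)=A_1+A_2$, where
\begin{align*}
A_1 &:= \int_t^{t+\varepsilon}\d s'\int p(s',\d y)\,g\bigl(t+\varepsilon-s',y,u(t+\varepsilon-s',y-x)\bigr),\\
A_2 &:= \int_0^t \d s'\int p(s',\d y)\,\bigl[g\bigl(t+\varepsilon-s',y,u(t+\varepsilon-s',y-x)\bigr)-g\bigl(t-s',y,u(t-s',y-x)\bigr)\bigr].
\end{align*}

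\textbf{Bounds on $A_1$ and $A_2$.}
For $A_1$, I will apply Minkowski's inequality for integrals together with the uniform linear growth of $g$ and the fact that $p(s',\cdot)$ is a probability measure, to obtain $\|A_1\|_k \le c_T\bigl(1+K(T+\varepsilon)\bigr)\varepsilon$; the finiteness of $K(T+\varepsilon)$ follows from the standard $L^k$ well-posedness of \eqref{mild:heat} extended slightly beyond $T$, i.e.\ the extension of \eqref{integrability}. For $A_2$, the Lipschitz condition in the third variable of $g$ --- together with the fact that $g$ enters \eqref{SHE} autonomously in $u$, so that $g(t+\varepsilon-s',y,z)$ and $g(t-s',y,z)$ can legitimately be compared at the same $z$ --- controls the bracket pointwise by $c_T\,|u(t+\varepsilon-s',y-x)-u(t-s',y-x)|$. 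Another application of Minkowski's inequality, combined with the fact that $p(s',\cdot)$ is a probability measure and that the bound depends on $y$ only through a supremum, gives
\[
\|A_2\|_k \le c_T\int_0^t \d s'\,\sup_{w\in\R^d}\bigl\|u(t+\varepsilon-s',w)-u(t-s',w)\bigr\|_k = c_T\int_0^t \d\sigma\,\sup_{w\in\R^d}\bigl\|u(\sigma+\varepsilon,w)-u(\sigma,w)\bigr\|_k,
\]
after the substitution $\sigma=t-s'$. Summing the estimates for $A_1$ and $A_2$ produces \eqref{lem:Var:u:t-2} with $C_T$ absorbing the finite constants.

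\textbf{Main obstacle.}
The nontrivial move is the choice of decomposition itself. A more literal splitting --- say, $A_1'=\int_t^{t+\varepsilon}(\cdots)$ and $A_2'=\int_0^t [p(t+\varepsilon-s,\d y)-p(t-s,\d y)](\cdots)$ --- would, upon invoking Chapman--Kolmogorov $p(t+\varepsilon-s)=p(\varepsilon)*p(t-s)$, trade the time shift in $p$ for a \emph{spatial} shift in the integrand, and the Lipschitz hypothesis would then produce spatial increments of $u$; these are precisely the object controlled by Lemma \ref{lem:Var:u:x} rather than by the present lemma. Pushing the shift onto the time argument of $u$ via the substitution $s'=t+\varepsilon-s$ is what makes the bound temporal on the right-hand side, and is the only delicate point: once this substitution is in place, the remaining estimates use only Minkowski, linear growth, the Lipschitz condition, and the fact that $p(s',\cdot)$ is a probability measure.
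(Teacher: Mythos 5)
Your proof is correct and follows essentially the same route as the paper's. The paper decomposes $I(t+\varepsilon,x)-I(t,x)$ into $I_1+I_2$, with $I_1 = \int_t^{t+\varepsilon}\int p(t+\varepsilon-s,\d y)\,g(\cdots)$ and $I_2 = \int_0^t\int[p(t+\varepsilon-s,\d y)-p(t-s,\d y)]\,g(\cdots)$, and then resolves $I_2$ into three further pieces $I_{2,1}, I_{2,2}, I_{2,3}$ precisely by the change of variable you perform up front; your two pieces $A_1, A_2$ regroup the same four quantities ($A_1 = I_{2,1}$, and $A_2$ absorbs $I_1$, $I_{2,3}$, and $-I_{2,2}$). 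The estimates in both proofs are identical in substance: Minkowski's inequality, the probability-measure property of $p(s,\cdot)$, linear growth for the boundary pieces, and Lipschitz continuity in the third argument for the bulk term, followed by a final change of variable back to $\sigma = t-s'$.

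Two small remarks. First, your aside about the "more literal splitting" is not quite right: the paper does take exactly that splitting ($I_1, I_2$) and never invokes Chapman--Kolmogorov, resolving $I_2$ instead by the same shift-of-variable you use; so the two decompositions are not conceptually distinct but merely differently bracketed. Second, you are right to flag explicitly that the comparison $g(t+\varepsilon-s',y,z)$ versus $g(t-s',y,z)$ requires $g$ to have no essential dependence on its time argument: the paper's own $I_{2,3}$ is written as $\int_0^{t-\varepsilon}\int p(t-s,\d y)\bigl[g(s,y,u(s+\varepsilon,y-x))-g(s,y,u(s,y-x))\bigr]$, which agrees with the true algebraic identity (the latter carries $g(s+\varepsilon,\cdot)-g(s,\cdot)$) only under that tacit autonomy assumption, since the standing hypotheses give Lipschitz control only in the $z$-variable with the other arguments held fixed. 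Your explicit acknowledgment of this point is a modest improvement in rigor over the paper's presentation.
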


\begin{proof}
	Write $I(t+\varepsilon\,,x)-I(t\,,x)= I_1(t\,,x\,;\varepsilon)+I_2(t\,,x\,;\varepsilon)$, where
	\begin{align*}
		I_1(t\,,x\,;\varepsilon)&= \int_t^{t+\varepsilon}\d s\int_{\R^d}\d y\ 
			p(t+\varepsilon-s\,,\d y)\,
			g (s\,,y\,,u(s\,,y-x)),\\
		I_2(t\,,x\,;\varepsilon) &= \int_0^t \d s\int_{\R^d} 
			\left[ p(t+\varepsilon-s\,,\d y)-p(t-s\,,\d y)\right] g(s\,,y\,,u(s\,,y-x)).
	\end{align*}
	To estimate $I_1(t\,,x\,;\varepsilon)$, we apply
	Minkowski's inequality and use the uniform linear growth of the function $g$. In
	this way we find that
	\begin{equation}\label{I3}
	\|I_1(t\,,x\,;\varepsilon)\|_k \le c_T\int_t^{t+\varepsilon}\d s\int_{\R^d}
		p(t+\varepsilon-s\,,\d y)\left( 1+\|u(s\,,y+x)\|_k\right)\le c_TK(T)\varepsilon,
	\end{equation}
	uniformly for all $t\in(0\,,T]$ and $x\in\R^d$.
	
	Next, we estimate $I_2(t\,,x\,;\varepsilon)$ by writing
	$I_2(t\,,x\,;\varepsilon) = I_{2,1}(t\,,x\,;\varepsilon)+ I_{2,2}(t\,,x\,;\varepsilon)+
	I_{2,3}(t\,,x\,;\varepsilon)$, where
	\begin{align*}
		I_{2,1}(t\,,x\,;\varepsilon) &= \int_0^\varepsilon \d s\int_{\R^d} p(t+\varepsilon-s\,,\d y)\, 
			g(s\,,y\,,u(s\,,y-x)),\\
		I_{2,2}(t\,,x\,;\varepsilon)&= \int_{t-\varepsilon}^t \d s\int_{\R^d} p(t-s\,,\d y) \,
			g(s\,,y\,,u(s\,,y-x)),\\
		I_{2,3}(t\,,x\,;\varepsilon) &= \int_0^{t-\varepsilon} \d s\int_{\R^d} p(t-s\,,\d y) 
			\left[ g(s\,,y\,,u(s+\varepsilon\,,y-x)) - g(s\,,y\,,u(s\,,y-x))\right].
	\end{align*}
	The same arguments that were used to estimate the term $I_1(t\,,x\,;\varepsilon)$ yield
	\begin{equation}\label{I21;22}
		\|I_{2,1}(t\,,x\,;\varepsilon)\|_k + \|I_{2,2}(t\,,x\,;\varepsilon))\|_k \le  c_TK(T)\varepsilon,
	\end{equation}
	valid uniformly for all $t\in(0\,,T]$ and $x\in\R^d$. 
	Finally, because of the Lipschitz continuity property of $g$,
	\[
		\|I_{2,3}(t\,,x\,;\varepsilon)\|_k \le c_T \int_0^t \sup_{y\in\R^d}
		\left\| u(s+\varepsilon\,,y)-u(s\,,y) \right\|_k\,\d s,
	\]
	uniformly for  all $t\in(0\,,T]$ and $x\in\R^d$.
	Combine this with \eqref{I3}, \eqref{I21;22} in order to deduce \eqref{lem:Var:u:t-2}.
	  \end{proof}

\begin{proposition}\label{pr:Var:u:t}
We assume that $u_0$ satisfies the hypotheses of Proposition \ref{pr:heat}. Then,
	\[
		\adjustlimits\sup_{t\in(0,T]}\sup_{x\in\R^d}
		\| u(t+\varepsilon\,,x) - u(t\,,x) \|_k \lesssim  \varepsilon^{\frac12(\eta \wedge \indH)+ o(1)}
		\qquad\text{as $\varepsilon\downarrow0$},
	\]
where $k\ge 2$ and $\eta\in(0,1]$ are given in \eqref{u_0}.	
\end{proposition}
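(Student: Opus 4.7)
The plan is to start from the mild formulation \eqref{mild:heat}, decompose the increment $u(t+\varepsilon,x)-u(t,x)$ into three pieces, estimate each one separately, and then close an estimate via a Gronwall-type argument. Write
\[
	u(t+\varepsilon,x)-u(t,x) = D_0(t,x;\varepsilon)+D_1(t,x;\varepsilon)+D_2(t,x;\varepsilon),
\]
where
\begin{align*}
	D_0(t,x;\varepsilon) &= (p(t+\varepsilon)*u_0)(-x) - (p(t)*u_0)(-x),\\
	D_1(t,x;\varepsilon) &= I(t+\varepsilon,x) - I(t,x),\\
	D_2(t,x;\varepsilon) &= H(t+\varepsilon,x) - H(t,x).
\end{align*}

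First, since $u_0$ is assumed to satisfy the moment-type hypotheses of Proposition \ref{pr:heat} for the exponent $k$, that proposition yields $\|D_0(t,x;\varepsilon)\|_k \lesssim \varepsilon^{\eta/2}$ uniformly in $t\in[0,T]$ and $x\in\R^d$. Second, $H$ is a centered space-time Gaussian random field whose variance increments are controlled, for any $a\in(0\,,\indH)$, by $\|H(t+\varepsilon,x)-H(t,x)\|_2^2\lesssim \varepsilon^a$, as shown in the course of the proof of Theorem \ref{th:Holder:H} via Lemma \ref{lem:ind:H} and Proposition \ref{pr:phi*f:t}. The Gaussian moment identity \eqref{L^n(O)} then upgrades this to $\|D_2(t,x;\varepsilon)\|_k\lesssim \varepsilon^{a/2}$ uniformly in $t\in[0,T]$ and $x\in\R^d$, for every $a\in(0\,,\indH)$ and every even integer $k\ge 2$ (and hence for every real $k\ge 2$ by monotonicity of $L^k$ norms). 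Third, the assumption on $u_0$ together with the standard fixed-point argument alluded to after \eqref{integrability} provides $K(T)=\sup_{t\in[0,T]}\sup_{x\in\R^d}\|u(t,x)\|_k<\infty$, so Lemma \ref{lem:Var:u:t} applies and gives
\[
	\sup_{x\in\R^d}\|D_1(t,x;\varepsilon)\|_k \le C_T\left(\varepsilon + \int_0^t \Phi(s,\varepsilon)\,\d s\right),
	\qquad\text{where }\Phi(s,\varepsilon):=\sup_{y\in\R^d}\|u(s+\varepsilon,y)-u(s,y)\|_k.
\]

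Now fix an arbitrary $a\in(0\,,\indH)$. Combining the three estimates and taking the supremum over $x\in\R^d$, we obtain
\[
	\Phi(t,\varepsilon) \le C_1(T)\bigl(\varepsilon^{\eta/2} + \varepsilon^{a/2} + \varepsilon\bigr) + C_T\int_0^t \Phi(s,\varepsilon)\,\d s
	\qquad\text{for every $t\in(0\,,T]$ and $\varepsilon\in(0\,,1]$.}
\]
Since $\varepsilon\le \varepsilon^{\eta/2}$ for $\varepsilon\le 1$, Gronwall's lemma yields
\[
	\Phi(t,\varepsilon) \le C_2(T)\bigl(\varepsilon^{\eta/2} + \varepsilon^{a/2}\bigr)\e^{C_T T}
	\lesssim \varepsilon^{(\eta \wedge a)/2}
	\qquad\text{uniformly for $t\in(0\,,T]$ and $\varepsilon\in(0\,,1]$.}
\]
Because $a\in(0\,,\indH)$ is arbitrary, letting $a\uparrow\indH$ gives the announced bound $\varepsilon^{\frac12(\eta\wedge \indH)+o(1)}$ as $\varepsilon\downarrow 0$.

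The main (very mild) obstacle is to verify that the $L^k$ bound for the stochastic convolution increment is available at the claimed order $\varepsilon^{a/2}$ for every $a<\indH$; this is immediate once one recognizes that $H$ is Gaussian, so \eqref{L^n(O)} reduces matters to the $L^2$ increment bound already established in Section \ref{s4}. The rest is a routine Gronwall argument closing the loop between the nonlinear drift term and $\Phi$.
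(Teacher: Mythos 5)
Your proof is correct and follows essentially the same route as the paper: the same three-term decomposition of $u(t+\varepsilon,x)-u(t,x)$ into the initial-condition increment, the drift-integral increment, and the stochastic-convolution increment, the same invocation of Proposition \ref{pr:heat}, Lemma \ref{lem:Var:u:t}, and the Gaussian $L^2$-to-$L^k$ upgrade \eqref{L^n(O)}, and the same closing Gronwall argument. The only difference is cosmetic (you spell out the pick of an even integer $n\ge k$ to extend \eqref{moment:n} to general $k\ge 2$, which the paper leaves implicit).
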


\begin{proof}
Fix $\varepsilon\in(0,1)$. Thanks to \eqref{mild:heat} 
	we may write, for all $x\in\R^d$, $t\in(0\,,T]$ and $\varepsilon>0$, the decomposition:
	\begin{equation}\label{u=I1+I2+I3}
		u(t+\varepsilon\,,x) - u(t\,,x) = J(t\,,x\,;\varepsilon) + [I(t+\varepsilon,x)-I(t,x)] +[H(t+\varepsilon\,,x)-H(t\,,x)],
	\end{equation}
	where $J(t\,,x\,;\varepsilon)= (p(t+\varepsilon) * u_0)(x) - (p(t)*u_0)(x)$.
	
	Proposition \ref{pr:heat} ensures that 
	\begin{equation}\label{I1}
	\|J(t\,,x\,;\varepsilon)\|_k\le c^{1/k} \varepsilon^{\eta/2},
	\end{equation}
uniformly for every $t>0$, $x\in\R^d$.
Furthermore, we recall that	
\begin{equation}
\label{heat:moment-time}
		\sup_{t\in[0,T]}\sup_{x\in\R^d}
		\E\left(\|H(t+\varepsilon\,,x)-H(t\,,x)\|_k\right)\lesssim \varepsilon^{a/2},
		\end{equation}	
for any  $a\in(0\,,\indH)$, uniformly in $\varepsilon\in(0,1)$ ( see \eqref{moment:n}).

Set
\[
f_\varepsilon(t) := \sup_{x\in\R^d} \|u(t+\varepsilon\,,x) - u(t\,,x)\|_k,
	 \qquad 0<t\le T.
	 \]
As was mentioned before, the assumptions of the proposition imply the validity of \eqref{lem:Var:u:t-1}.
Then from the estimates \eqref{I1}, \eqref{heat:moment-time} together with Lemma \ref{lem:Var:u:t}, we see that for any $\varepsilon\in(0,1)$ and 
for any $\delta\in(0,\frac12(\eta \wedge \indH))$,
\[
f_\varepsilon(t)\lesssim \varepsilon^\delta + \int_0^t f_\varepsilon(s)\ \d s,
\]
uniformly in $\varepsilon\in(0,1)$.
The proposition follows from Gronwall's lemma since  $\delta\in(0,\frac12(\eta \wedge \indH))$ can be otherwise arbitrary.
\end{proof}
\smallskip

\begin{proposition}\label{pr:Var:u:x}
The initial condition $u_0$ is as in Proposition \ref{pr:heat}. Then,
	\[
		\adjustlimits\sup_{t\in(0,T]}\sup_{x\in\R^d}
		\| u(t\,,x+h) - u(t\,,x) \|_k \le \|h\|^{\eta\wedge\indL + o(1)}
		\qquad\text{as $h\to 0$},
	\]
	where $k\ge 2$ and $\eta\in(0,1]$ are given in \eqref{u_0}.
\end{proposition}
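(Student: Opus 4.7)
The plan is to mirror the structure of the proof of Proposition \ref{pr:Var:u:t}, but in the spatial variable. Using the mild formulation \eqref{mild:heat}, decompose
\[
u(t\,,x+h) - u(t\,,x) = J_1(t\,,x\,;h) + \bigl[\,I(t\,,x+h) - I(t\,,x)\bigr] + \bigl[\,H(t\,,x+h) - H(t\,,x)\bigr],
\]
where $J_1(t\,,x\,;h) = (p(t)*u_0)(-x-h) - (p(t)*u_0)(-x)$ and $I(t\,,x)$ is as in Lemma \ref{lem:Var:u:t}. For $J_1$, Proposition \ref{pr:heat} applied with $\varepsilon=0$ yields $\|J_1(t\,,x\,;h)\|_k \lesssim \|h\|^\eta$ uniformly in $t\in(0\,,T]$ and $x\in\R^d$.

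For the stochastic convolution increment, I would invoke the bound \eqref{heat:moment:00} from the proof of Theorem \ref{th:Holder:H}: for every $\beta\in(0\,,\indL)$,
\[
\sup_{t\in[0,T]}\sup_{x\in\R^d} \bigl\|H(t\,,x+h) - H(t\,,x)\bigr\|_k \lesssim \|h\|^{\beta},
\]
uniformly in $h\in\R^d$ with $\|h\|\le1$. (Note that $\indL>0$ is needed here; if $\indL=0$ there is nothing to prove beyond the trivial $o(1)$ statement.)

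For the drift contribution, Minkowski's inequality together with the global Lipschitz property of $g$ gives
\[
\bigl\|I(t\,,x+h) - I(t\,,x)\bigr\|_k \le c_T\int_0^t\d s\int_{\R^d} p(t-s\,,\d y)\ \bigl\|u(s\,,y-x-h)-u(s\,,y-x)\bigr\|_k.
\]
The change of variables $z = y-x$ absorbs the shift by $x$; since $p(t-s\,,\d y)$ is a probability measure independent of $x$, the right-hand side is bounded by $c_T\int_0^t \sup_{z\in\R^d}\|u(s\,,z+h)-u(s\,,z)\|_k\,\d s$. Setting $f_h(t) := \sup_{x\in\R^d}\|u(t\,,x+h)-u(t\,,x)\|_k$ (which is finite thanks to the $L^k$ bound \eqref{integrability} with exponent $k$, valid under the hypothesis on $u_0$), the three estimates combine to give, for every $\beta\in(0\,,\indL)$,
\[
f_h(t) \le C\bigl(\|h\|^\eta + \|h\|^\beta\bigr) + c_T\int_0^t f_h(s)\,\d s,
\qquad t\in(0\,,T],\ \|h\|\le1.
\]
Gronwall's lemma then yields $f_h(t) \lesssim \|h\|^{\eta\wedge\beta}$ uniformly in $t\in(0\,,T]$, and since $\beta\in(0\,,\indL)$ is arbitrary the conclusion follows.

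The only subtlety is justifying that $\sup_x$ passes cleanly through the integral in the drift estimate, which is immediate after the translation in $y$ above. The moment bound on $H$, already established in the linear case, does the heavy analytic lifting; the nonlinearity only contributes the Gronwall term, which does not degrade the Hölder exponent.
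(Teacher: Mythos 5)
Your proof is correct and follows essentially the same route as the paper's: the same decomposition into the initial-condition term, the drift convolution term, and the noise term $H$, with Proposition \ref{pr:heat} handling the first, the global Lipschitz property plus the translation-invariance of $p(t-s,\d y)$ handling the second, \eqref{heat:moment:00} (via $\cI_R(T)=2\indL$) handling the third, and Gronwall closing the loop. The only minor additions you make — explicitly noting the trivial $\indL=0$ case and spelling out the change of variables in the drift estimate — are harmless elaborations, not a different argument.
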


\begin{proof}
	The proof is similar to that of Proposition \ref{pr:Var:u:t}, but simpler.
	Write
	\[
		u(t\,,x+h) - u(t\,,x) = J_1(t\,,x\,;h) + J_2(t\,,x\,,;h) + [H(t\,,x+h) - H(t\,,x)],
	\]
	where
	\begin{align*}
		J_1(t\,,x\,;h) &= (p(t)*u_0)(x+h) - (p(t)*u_0)(x),\\
		J_2(t\,,x\,;h) &= \int_0^t\d s\int_{\R^d} p(t\,,\d y) 
			\left[g(s\,,y\,,u(s\,,x+h-y)) - g(s\,,y\,,u(s\,,x-y))\right].
	\end{align*}
	Proposition \ref{pr:heat} ensures that
	$\|J_1(t\,,x\,;h)\|_k \lesssim \|h\|^{\eta}$, and since $g$ is uniformly global Lipschitz continuous, applying Minkowski's inequality we deduce
		\[
		\|J_2(t\,,x\,;h)\|_k  \lesssim \int_0^t \sup_{w\in\R^d} \| u(s\,,w+h) - u(s\,,w)\|_k\,\d s,
	\]
	all valid uniformly for all $x,h\in\R^d$ and $t\in(0\,,T]$. Set
	\[
		\bar f_h(t) = \sup_{x\in\R^d} \|u(t\,,x+h)-u(t\,,x)\|_k
		\qquad 0<t\le T.
	\]
	From the above discussion we see that
		\begin{equation}
		\label{h1}
		\bar f_h(t) \le ch^{\eta} + c\int_0^t \bar f_h(s)\,\d s + \sup_{s\in(0,T]}
		\|H(s\,,x+h) - H(s\,,x)\|_k
		\hskip1in(0\le t\le T).
	\end{equation}
	Because of  the estimate \eqref{heat:moment:00} and the identity 
	$\cI_R(T)=2 \indL$ (see Lemma \ref{lem:ind:H}), for every $\gamma\in(0\,, \indL)$,
	\[
		\sup_{t\in[0,T]}\sup_{x\in\R^d}\|H(s\,,x+h) - H(s\,,x)\|_k\lesssim |h|^{\gamma},
	\]
	uniformly for all $h\in\R^d$ that satisfy $\|h\|\le 1$. Consequently,  \eqref{h1} yields
	\[
		\bar f_h(t) \le c_1 h^{\eta\wedge \indL} + c_2 \int_0^t \bar f_h(s)\,\d s\quad 
		\text{for every}\quad t\in(0\,,T].
	\]	 
	The proposition follows from Gronwall's lemma.\end{proof}
	
Assume that the hypotheses of Proposition \ref{pr:heat} on moments of $u_0$ hold for any $k\ge 2$. Then 
	Propositions \ref{pr:Var:u:t} and \ref{pr:Var:u:x} and Kolmogorov's continuity theorem
	imply the following.
	\begin{theorem}\label{th:heat-nonlinear}
	\begin{enumerate}
	\item If $\indH>0$ then for every $x\in\R^d$ , $u(\cdot\,,x)$ is a.s.\ in $C^\alpha_{\text{\it loc}}(\R_+)$
	for any $\alpha\in(0\,,\frac12(\eta\wedge\indH))$.
	\item  If $\indL>0$ then with probability one $u\in C^{\alpha,\beta}_{\text{\it loc}}(\R_+\times\R^d)$
	for every $\alpha\in(0\,,\frac12(\eta\wedge\indH))$ and $\beta\in(0\,, \eta\wedge\indL)$.
	\end{enumerate}
	
	When $u_0\equiv g\equiv0$, this becomes assertions \eqref{H:x} and \eqref{H:t} 
	of Theorem \ref{th:Holder:H}, respectively.
\end{theorem}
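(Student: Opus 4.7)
The plan is to feed the moment estimates of Propositions \ref{pr:Var:u:t} and \ref{pr:Var:u:x} into Kolmogorov's continuity theorem, exactly as was done in the proof of Theorem \ref{th:Holder:H}. Since the hypotheses on $u_0$ are assumed to hold for every $k\ge 2$, the constants in \eqref{u_0} and hence in \eqref{lem:Var:u:t-1} depend on $k$ but not otherwise obstruct the argument; the freedom to take $k$ arbitrarily large is exactly what allows us to convert $L^k$ bounds into almost-sure H\"older regularity with the asserted exponents.

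For part (1), I would fix $\alpha\in(0\,,\tfrac12(\eta\wedge\indH))$. Proposition \ref{pr:Var:u:t} yields, for each $k\ge 2$,
\[
\sup_{x\in\R^d}\E\left(|u(t+\varepsilon\,,x)-u(t\,,x)|^k\right)\lesssim\varepsilon^{k\alpha'}
\]
uniformly in $t\in(0\,,T]$ and $\varepsilon\in(0\,,1)$, for some $\alpha'\in(\alpha\,,\tfrac12(\eta\wedge\indH))$. Choosing $k$ large enough that $k\alpha'>1+k\alpha$ (i.e.\ $k>1/(\alpha'-\alpha)$) and fixing any $x\in\R^d$, Kolmogorov's continuity theorem applied to the one-parameter process $t\mapsto u(t\,,x)$ gives a modification in $C^\alpha_{\text{\it loc}}(\R_+)$ with probability one. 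Since $\alpha$ is arbitrary in $(0\,,\tfrac12(\eta\wedge\indH))$, intersecting over a countable increasing sequence yields the assertion.

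For part (2), I would fix $\alpha\in(0\,,\tfrac12(\eta\wedge\indH))$ and $\beta\in(0\,,\eta\wedge\indL)$ and combine both moment bounds. From Propositions \ref{pr:Var:u:t} and \ref{pr:Var:u:x} we get, for each $k\ge 2$ and arbitrarily small slack $\alpha<\alpha'<\tfrac12(\eta\wedge\indH)$, $\beta<\beta'<\eta\wedge\indL$,
\[
\E\left(|u(t\,,x)-u(s\,,y)|^k\right)\lesssim |t-s|^{k\alpha'}+\|x-y\|^{k\beta'},
\]
uniformly on any bounded rectangle in $\R_+\times\R^d$. Picking $k$ large enough that $k\alpha'>1+k\alpha$ and $k\beta'>d+k\beta$ simultaneously, the multi-parameter Kolmogorov continuity theorem (as used in deriving \eqref{H:t} from \eqref{heat:moment}) produces a modification in $C^{\alpha,\beta}_{\text{\it loc}}(\R_+\times\R^d)$ almost surely. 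Letting $\alpha,\beta$ increase through countable sequences to $\tfrac12(\eta\wedge\indH)$ and $\eta\wedge\indL$ respectively completes the proof.

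The only genuinely delicate point, and the one the propositions are designed to address, is the presence of the non-linearity $g$: the na\"ive approach of bounding increments of $u$ directly from \eqref{mild:heat} produces a Gronwall-type recursion in $\sup_x\|u(t+\varepsilon\,,x)-u(t\,,x)\|_k$, and the regularity of $H$ and of the semigroup term $(p(t)*u_0)(-x)$ (via Proposition \ref{pr:heat} and Theorem \ref{th:Holder:H}) must be exactly what is needed to close that recursion. Once Propositions \ref{pr:Var:u:t} and \ref{pr:Var:u:x} are in hand, the remainder is a routine application of Kolmogorov's theorem; accordingly, no further obstacle is anticipated.
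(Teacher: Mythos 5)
Your proof is correct and follows exactly the route the paper takes: the paper does not even give a formal proof environment, but simply states that Propositions \ref{pr:Var:u:t} and \ref{pr:Var:u:x} (which give $L^k$ moment bounds on time and space increments of $u$ for all $k\ge2$) together with Kolmogorov's continuity theorem imply the result. Your fleshing-out of the exponent bookkeeping and the countable intersection over $\alpha,\beta$ is the standard argument the paper leaves implicit, and your closing remark correctly locates the real work in the Gronwall recursions behind the two propositions rather than in the Kolmogorov step.
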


It would be interesting to have a counterpart of Theorem \ref{th:heat-nonlinear} for the stochastic wave equation. Unfortunately, this question seems at the moment out of reach. The last part of the section is devoted to describe some of the problems to be solved to progress in that direction.  

Consider the stochastic nonlinear wave equation:
\begin{equation}\label{SWE:NW}
\left[\begin{split}
	&\partial_t^2 u  = \rL u  + g(u) + \dot{\rf}\quad\text{on $(0\,,\infty)\times\R^d$},\\
	&\text{subject to}\quad u(0)=\partial_t u(0+)=0\quad\text{on $\R^d$},
\end{split}\right.\end{equation}
where $g:\R\rightarrow \R$ is a Lipschitz continuous function, 
and we assume that the characteristic exponent $\psi$ satisfies \eqref{RL} and $\psi(\xi)=\psi(-\xi)$. 
Similarly to the case of the heat equation, a stochastic process 
$\{u(t\,,x);\ (t\,,x)\in\R_+\times \R^d\}$ is called a random-field solution to \eqref{SWE:NW} if it satisfies some appropriate measurability conditions and for every $(t\,,x)\in\R_+\times \R^d$,
\begin{equation}\label{s6.1}
	u(t\,,x) = \int_0^t \d s \int_{\R^d} \d y\  G(t-s\,,x-y)\ g(u(s\,,y)) + W(t\,,x),
\end{equation}
a.s., where $G(t\,,\cdot)$ denotes here the fundamental solution of the wave operator $\partial_t^2 - \rL$, and $W$ is the random-field solution to \eqref{SWE:G}. 
Remember that $W$ is defined by the stochastic convolution $G * F$, and also that
\begin{equation}
\label{s6.2}
	\hat G(t\,,\xi) = t\sinc\left( t\sqrt{\psi(\xi)}\right)\quad
	\text{for $t\ge 0,\ \xi\in\R^d.$}
\end{equation}
When $\psi(\xi) = \|\xi\|^2$, and for any spatial dimension $d\ge 1$, 
the pathwise integral $\int_0^t \d s \int_{\R^d}\d y\  G(t-s\,,x-y)\ g(u(s\,,y))$ 
in \eqref{s6.1} is rigorously defined in 
 \cite[Proposition 3.4]{Conus-Dalang}. A further analysis shows that the validity of this proposition can be extended to symmetric characteristic exponents $\psi$ (as described above) if the fundamental solution $G$ satisfies the following:
 \begin{description}
 \item{(a)} $G(t\,,\cdot)\in \mathcal{O}_c^\prime$ (the space of Schwartz distributions of rapid decrease); 
 \item{(b)} the function $(s\,,\xi)\mapsto \hat G(s\,,\xi)$ is measurable;
\item {(c)} $\int_0^T \sup_{\xi\in\R^d} | \hat G(s\,,\xi)|^2\, \d s < \infty$;
\item {(d)} $\lim_{h\downarrow 0} \int_0^T 
	\sup_{\xi\in\R^d} \sup_{s<r<s+h}| \hat G(r\,,\xi)  -  \hat G(s\,,\xi)|^2\,\d s = 0$.
 \end{description}
 This is indeed the case, as we now argue. 
 
According to \cite[Th\'eor\`eme IX, p.244]{Schwartz}, condition (a) is equivalent to saying that 
$G(t\,,\cdot) * \varphi\in\mathcal{S}(\R^d)$
for every   $\varphi\in\mathcal{D}(\R^d)$.  Since $\hat \varphi\in \mathcal{S}(\R^d)$, this follows from the lower bound  $t\sinc( t\sqrt{\psi(\xi)})\gtrsim [1+\psi(\xi)]^{-1}$ along with the assumption \eqref{RL}.

Assertion (b) is a consequence of the continuity of $\psi$. Property (c) follows from the upper bound 
$t\sinc\left( t\sqrt{\psi(\xi)}\right) \lesssim[1+\psi(\xi)]^{-1}$. As regards (d), using \eqref{s6.2}, we have
\begin{equation*}
|\hat G(r\,,\xi) - \hat G(s\,,\xi)| \le \frac{2\wedge |r-s|\sqrt{\psi(\xi)}}{\sqrt{\psi(\xi)}} \le |r-s|
\end{equation*}
(see stage 3 in the proof of Lemma \ref{lem:ind:W}). This yields 
$\sup_{\xi\in\R^d} \sup_{s<r<s+h}| \hat G(r\,,\xi)  -  \hat G(s\,,\xi)|\le h$
and therefore, (d) holds.

We deduce that the hypotheses required in the application of \cite[Theorem 4.2 ]{Conus-Dalang} to the particular case when the coefficient $\alpha$
there vanishes and $\beta\equiv g$, are satisfied. This yields at once two facts: 

1)\ The expression \eqref{s6.1} is well-defined, that means, the integral $\int_0^t \d s \int_{\R^d}\d y\  G(t-s\,,x-\d y)\ g(u(s\,,y))$ is a $L^2$--random variable. In fact, setting $Z(s\,,x):=g(u(s\,,y))$, one has
\begin{align*}
	\left\Vert\int_0^t \d s \int_{\R^d} G(t-s\,,x-\d y)\ g(u(s\,,y))\right\Vert_2^2
	\lesssim \left(\sup_{0\le s\le t}\sup_{x\in\R^d} \Vert Z(s\,,x)\Vert_2^2\right) 
	\int_0^t \d s \sup_{\xi\in\R^d} | \hat G(s\,,\xi)|^2.
\end{align*}

2)\  There exists a stochastic process $\{u(t\,,x);\ (t\,,x)\in\R_+\times \R^d\}$ that satisfies \eqref{s6.1} and
\begin{equation}
\label{s6.4}
	\sup_{0\le s\le t}\sup_{x\in\R^d} \Vert u(s\,,x)\Vert_2 < \infty.
\end{equation}

However, even in the case $\psi(\xi)=\Vert \xi\Vert^2$, the extension of \eqref{s6.4} to $L^p$ moments, with $p>2$, is an open question. Recall that, in Proposition \ref{pr:Var:u:x} (relative to the stochastic heat equation), the existence of $L^p$ moments of any order $p\ge 2$, is fundamental for the proof.  

Comparing with the nonlinear heat equation  \eqref{mild:heat}, we 
see that the drift $g$ in \eqref{s6.1} is more particular and moreover, the initial conditions are null. This is to ensure 
a stationary-type property which is crucial in the theory developed in \cite{Conus-Dalang}, and as a consequence, to give a rigorous meaning to \eqref{SWE:NW}. It is an open and rather speculative project to build an integration theory \`a la Conus-Dalang without stationary constraints on the integrands.

\section{A comment on fractional powers of $\rL$}\label{sec:frac}
Suppose $X$ is symmetric and that there exists $q>0$ such that
\begin{equation}\label{stable_}
	\psi(\xi) \gtrsim \|\xi\|^q\qquad\text{uniformly for all $\xi\in\R^d$}.
\end{equation}
We can deduce from \eqref{Bochner} that $q\le 2$. Property \eqref{stable_} says
that, in some sense, the law of $X(1)$ is smoother than the law of a radially
symmetric $q$-stable process. Under these conditions, 
\[
	\int_{\R^d}\frac{\|\xi\|^{2b}\,\mu(\d\xi)}{1+\psi(\xi)}
	\lesssim\int_{\R^d}\frac{|\psi(\xi)|^{2b/q}\,\mu(\d\xi)}{1+\psi(\xi)},
\]
which, together with \eqref{indR<indH}, yields
\begin{equation}\label{bbb}
	\indL>0\quad\Longleftrightarrow\quad\indH>0
	\quad\Longleftrightarrow\quad\int_{\R^d}\frac{\mu(\d\xi)}{1+[\psi(\xi)]^\tau}<\infty
	\text{ for some $\tau\in(0\,,1)$}.
\end{equation}
Whenever $\tau\in(0\,,1)$, we can introduce an independent $\tau$-stable subordinator
$S$ (see Bertoin \cite{Bertoin}); that is $S$ is a L\'evy process with $S(0)=0$ and nondecreasing
sample functions, normalized such that
$\E\exp(-\lambda S(1))=\exp(-\lambda^\tau)$ for every $\lambda\ge0$. Define
$X'(t) = X(S(t))$ to be the process $X$, subordinated by $S$. Then, 
disintegration shows that $X'$ is a symmetric L\'evy process with 
$\E\exp(i\xi\cdot X'(1)) = \exp( - [\psi(\xi)]^\tau)$ for  all $\xi\in\R^d$.
Moreover, the generator of $X'$ is the $\tau$-th power of $\rL$ -- denoted
by $-(-\rL)^\tau$ -- whose multiplier is defined via symbolic calculus as $-\psi^\tau$.
In the present setting, conditions \eqref{cond:H} and \eqref{cond:R}
are equivalent to one another, as well as the final condition in \eqref{bbb}.  Now we observe
that the final condition in \eqref{bbb} is simply Dalang's condition \eqref{Dalang}
for the L\'evy process $X'$. In this way we can see from Dalang's theory \cite{Dalang}
and Theorems \ref{th:heat} and \ref{th:Holder:H} that, in the present setting
where $\rL$ is self adjoint and satisfies \eqref{stable_},
the optimal condition for the H\"older
regularity condition of the solution to \eqref{SHE} is that the following SPDE has a random field
solution for some $\tau\in(0\,,1)$:
$$\left[\begin{split}
	&\partial_t v = -(-\rL)^\tau v  + \dot{\rf}
		\quad\text{on $(0\,,\infty)\times\R^d$},\\
	&\text{subject to } v(0)=u_0\quad\text{ on $\R^d$},
\end{split}\right.$$
Analogously,  the optimal condition for H\"older regularity of the solution to
\eqref{SWE} is equivalent to the existence of 
random-field solution of the following initial-value problem for some $\tau\in(0\,,1)$:
$$\left[\begin{split}
	&\partial^2_{t} v = -(-\rL)^\tau v  + \dot{\rf}
		\quad\text{on $(0\,,\infty)\times\R^d$},\\
	&\text{subject to } v(0)=\partial_t(v(0))=0\quad\text{ on $\R^d$}.
\end{split}\right.$$

\appendix
\section{Appendix}

\begin{lemma}\label{lem:A}
	If $0<c<b$, then uniformly for all $a>0,$
	\[
		\int_0^1\left( (a\varepsilon)^b\wedge 1\right)\frac{\d\varepsilon}{\varepsilon^{1+c}}
		\asymp\begin{cases}
			 a^b\wedge a^c & \text{if $0<c<b$},\\
			\infty&\text{if $c\ge b$}.
		\end{cases}
	\]
\end{lemma}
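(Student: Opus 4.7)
My plan is to reduce the integral to a one-variable form by the change of variables $u=a\varepsilon$, and then analyze the resulting expression in two regimes ($a\le 1$ and $a>1$).

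First I would substitute $u=a\varepsilon$ (so $\d u = a\,\d\varepsilon$ and $\varepsilon^{-1-c} = a^{1+c}u^{-1-c}$) to obtain the identity
\[
   \int_0^1\bigl((a\varepsilon)^b\wedge 1\bigr)\frac{\d\varepsilon}{\varepsilon^{1+c}}
   = a^c\int_0^a \bigl(u^b\wedge 1\bigr)\frac{\d u}{u^{1+c}}.
\]
This collapses the one-parameter family of integrals into $a^c$ times a single monotone function $J(a):=\int_0^a(u^b\wedge 1)u^{-1-c}\,\d u$.

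Next, I would dispatch the divergent case. Near $u=0$ the integrand equals $u^{b-1-c}$ exactly, which is non-integrable on any right neighborhood of $0$ whenever $b-1-c\le -1$, i.e.\ whenever $c\ge b$. Hence $J(a)=\infty$ for every $a>0$ in that regime, giving the second line of the statement.

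For the remaining case $0<c<b$, I would compute $J(a)$ by splitting at $u=1$. When $a\le 1$, $J(a)=\int_0^a u^{b-1-c}\,\d u = a^{b-c}/(b-c)$, so the original integral equals $a^b/(b-c)$, and since $a\le 1$ with $b>c$ we have $a^b\le a^c$, hence $a^b\wedge a^c = a^b$. When $a>1$,
\[
   J(a) = \int_0^1 u^{b-1-c}\,\d u + \int_1^a u^{-1-c}\,\d u
   = \frac{1}{b-c} + \frac{1-a^{-c}}{c},
\]
which is bounded above by $\tfrac{1}{b-c}+\tfrac{1}{c}$ and below by $\tfrac{1}{b-c}$, so $J(a)\asymp 1$ uniformly for $a>1$. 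Multiplying by $a^c$ gives the integral $\asymp a^c$, and since $a>1$ with $b>c$ implies $a^b\ge a^c$, we again have $a^b\wedge a^c = a^c$. Patching the two regimes yields the claimed equivalence, uniformly for all $a>0$.

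There is no serious obstacle: the calculation is elementary once the substitution $u=a\varepsilon$ is made. The only point that deserves care is the verification that the implied constants in $\asymp$ can be chosen independently of $a$, which is automatic from the explicit bounds on $J(a)$ above.
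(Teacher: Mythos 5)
Your proof is correct and complete; the substitution $u=a\varepsilon$ cleanly reduces to the single function $J(a)$, and the two-regime analysis ($a\le 1$, $a>1$) with explicit bounds establishes the uniform comparability. The paper states Lemma \ref{lem:A} without proof in its appendix, but your argument is the natural one and fills that gap correctly.
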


\begin{lemma}\label{lem:B}
	If $T>0$ is fixed, then
	$\int_0^T s^{-a}\exp(-2sb)\,\d s \asymp (1+b)^{-1+a}$,
	uniformly for every $a\in(0\,,1)$ and $b\ge0$.
\end{lemma}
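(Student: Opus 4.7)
The plan is to split the analysis of $J(b):=\int_0^T s^{-a}\e^{-2sb}\,\d s$ into the two regimes $b\le 1$ and $b\ge 1$, since the integrand has qualitatively different behaviour depending on whether the exponential factor is effectively active on the interval $(0\,,T)$.

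First I would treat the regime $b\in[0\,,1]$. Here one has the obvious bracketing
\[
\e^{-2T}\int_0^T s^{-a}\,\d s\le J(b)\le \int_0^T s^{-a}\,\d s =\frac{T^{1-a}}{1-a},
\]
so that $J(b)$ is bounded above and below by positive constants depending only on $a$ and $T$. On the other hand $(1+b)^{a-1}\in[2^{a-1}\,,1]$ on this range, so both sides of the claimed equivalence are comparable to positive constants, and therefore to one another.

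Next I would handle the regime $b\ge 1$ by the natural change of variables $u=2sb$, which gives
\[
J(b)=(2b)^{a-1}\int_0^{2Tb} u^{-a}\,\e^{-u}\,\d u.
\]
Since $a\in(0\,,1)$, the function $u\mapsto u^{-a}\e^{-u}$ is integrable on $(0\,,\infty)$ with $\int_0^\infty u^{-a}\e^{-u}\,\d u=\Gamma(1-a)\in(0\,,\infty)$. For every $b\ge 1$ the inner integral is at most $\Gamma(1-a)$ and at least $\int_0^{2T} u^{-a}\e^{-u}\,\d u>0$; hence it stays between two positive constants, which yields $J(b)\asymp b^{a-1}\asymp(1+b)^{a-1}$ uniformly for $b\ge1$.

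Combining the two regimes gives $J(b)\asymp(1+b)^{a-1}$ uniformly for all $b\ge 0$. No step here is really an obstacle; the only mild subtlety is that ``uniformly for every $a\in(0\,,1)$'' in the statement must be read as allowing the implicit constants to depend on $a$ (and on $T$), since, for instance, $J(0)=T^{1-a}/(1-a)$ blows up as $a\uparrow 1$ while $(1+0)^{a-1}=1$ stays bounded; the intended uniformity is only in the parameter $b$, which is the parameter actually used when the lemma is invoked in the proof of Lemma \ref{lem:ind:H}.
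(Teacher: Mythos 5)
The paper states Lemma \ref{lem:B} in the Appendix without proof, so there is no argument of the authors' to compare against; your task here is effectively to supply a proof, and the one you give is correct and the natural one. The split at $b=1$, the elementary bracketing $\e^{-2T}\le\e^{-2sb}\le 1$ on $(0\,,T]$ when $b\le 1$, and the substitution $u=2sb$ reducing to $\Gamma(1-a)$ when $b\ge 1$ are all sound, and together they give $J(b)\asymp(1+b)^{a-1}$ with constants depending on $a$ and $T$ but not on $b$. Your closing remark on the phrase ``uniformly for every $a\in(0\,,1)$'' is also apt: since $J(0)=T^{1-a}/(1-a)\to\infty$ as $a\uparrow1$ while $(1+0)^{a-1}=1$, genuine uniformity in $a$ cannot hold, and indeed in the only place the lemma is invoked (the computation of $\cI_H$ in the proof of Lemma \ref{lem:ind:H}) the exponent is fixed and only the rate parameter, namely $\Re\psi(\xi)$, ranges over $[0\,,\infty)$, which is precisely the uniformity your argument establishes.
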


\begin{lemma}\label{lem:index}
	If $f:(0\,,1]\to\R_+$ is measurable, then
	\[
		\sup\left\{ b>0:\, \int_0^1 f(t)\,\frac{\d t}{t^{1+b}}<\infty\right\}
		\ge \sup\left\{ a>0:\,
		\limsup_{t\to0^+}\frac{f(t)}{t^a}<\infty\right\}.
	\]
	where $\sup\varnothing=0$. If $f$ is non decreasing and  measurable, then
	we can replace ``$\ge$'' with an identity.
\end{lemma}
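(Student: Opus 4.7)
My plan is to prove the two inequalities $B \geq A$ (always) and $B \leq A$ (when $f$ is non-decreasing) separately, where I set
\[
A := \sup\left\{a > 0 : \limsup_{t \to 0^+} \frac{f(t)}{t^a} < \infty\right\}, \qquad
B := \sup\left\{b > 0 : \int_0^1 f(t)\,\frac{\d t}{t^{1+b}} < \infty\right\},
\]
with $\sup\varnothing = 0$. Both directions reduce to short integral-test comparisons, in the spirit of Abel summation.

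For the general inequality $B \geq A$, I would suppose $A > 0$, fix $0 < b < a < A$, and use the definition of $A$ to obtain $C, \delta > 0$ with $f(t) \leq C t^a$ on $(0\,,\delta]$. Then
\[
\int_0^\delta f(t)\,\frac{\d t}{t^{1+b}} \;\leq\; C\int_0^\delta t^{a-1-b}\,\d t \;<\;\infty
\]
since $a > b$, while $\int_\delta^1 f(t)\, t^{-1-b}\,\d t$ is finite by the local integrability of $f$ on $(0\,,1]$. Hence $b$ lies in the defining set of $B$, and sending $b \uparrow A$ closes this direction.

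For the reverse inequality $A \geq B$ under monotonicity, the key point is that being non-decreasing lets one convert a bound on the weighted integral into a pointwise bound on $f$. I would fix $b \in (0\,,B)$, so that $I := \int_0^1 f(s)\, s^{-1-b}\,\d s$ is finite, and then exploit the fact that $f(s) \geq f(t/2)$ for $s \in [t/2\,,t]$ by monotonicity:
\[
I \;\geq\; \int_{t/2}^t f(s)\, s^{-1-b}\,\d s \;\geq\; f(t/2)\int_{t/2}^t s^{-1-b}\,\d s \;=\; \frac{2^b - 1}{b}\,f(t/2)\,t^{-b}.
\]
Because $I$ does not depend on $t \in (0\,,1]$, this yields $\limsup_{t\to 0^+} f(t)/t^b < \infty$, hence $A \geq b$. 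Letting $b \uparrow B$ finishes the proof.

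The argument is elementary, and the only step that really uses anything is the reverse direction, where the trick is noticing that an arbitrarily short window $[t/2\,,t]$ is already enough, under monotonicity, to extract a pointwise estimate from the integral. The only mild bookkeeping subtlety is handling $\int_\delta^1 f(t)\, t^{-1-b}\,\d t$ in the first step, which requires some local integrability of $f$ on $(0\,,1]$ not stated explicitly in the lemma; this is automatic in every application in this paper (and trivially so in the monotone case, since $f \leq f(1) < \infty$).
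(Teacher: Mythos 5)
Your proof is correct, and it follows a noticeably cleaner route than the paper's. The paper first proves the identity for monotone $f$ by discretizing the integral via $t = \e^{-n}$ (a standard integral-test comparison with the series $\sum_n \e^{bn} f(\e^{-n})$), and then lifts the $\ge$ direction to general measurable $f$ by comparing $f$ with its monotone envelope $\bar f(t) = \sup_{s\le t} f(s)$. You instead prove the $\ge$ direction directly, with no appeal to the monotone case at all, by the elementary estimate $\int_0^\delta t^{a-1-b}\,\d t < \infty$ for $b<a$; this is both shorter and sidesteps the issue that $\bar f$ may fail to be $\R_+$-valued. For the reverse inequality under monotonicity you replace the discrete Abel-summation step by the continuous ``window'' estimate $\int_{t/2}^t f(s)\,s^{-1-b}\,\d s \ge \tfrac{2^b-1}{b}\, f(t/2)\,t^{-b}$, which extracts the pointwise bound from the finiteness of the integral in one line. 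The two approaches buy essentially the same thing; yours is tighter and avoids a few sign slips that appear in the published argument (in the paper, $S_b$ is written with $\e^{-bn}$ where it should be $\e^{bn}$, and the condition $b>c$ in the second half should read $b<c$). You are also right to flag the hidden local-integrability requirement: as stated, the inequality can fail for a bounded-near-$0$ but non-integrable-near-$1$ function (then the right side is $+\infty$ and the left side is $0$), and the paper's envelope argument makes the same tacit assumption, since $\bar f$ would not be finite there either. In every use of the lemma in the paper, $f$ is non-decreasing and finite, so this is harmless.
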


\begin{proof}
	First, consider the case that $f$ is non decreasing and measurable.
	Let $I$ denote the quantity on the left-hand side of the identity of the lemma,
	and $J$ the quantity on the right.
	The integral test from calculus tells us that the integral in the definition of 
	$I$ is finite iff $S_b = 
	\sum_{n=1}^\infty \e^{-b n}f(\e^{-n})$ is finite. If $c\in[0\,,I)$,
	then $S_c<\infty$ and hence $\e^{-c n}f(\e^{-n})\to0$
	as $n$ tends to infinity along integers. Apply monotonicity to see that
	$\lim_{t\to0^+} t^{-c}f(t)=0$. This shows that every 
	$c<I$ satisfies $c\le J$, which is to say that $I\le J$. It remains to prove that $J\le I$.
	
	On one hand, if $J=0$ then the above proves that $I=J=0$.
	On the other hand, if $J>0$ and $c\in(0\,,J)$, then 
	$\limsup_{t\to0^+} t^{-c}f(t)<\infty$.
	This is equivalent to saying that
	$c=\sup_{n\in\N}[\e^{c n}f(\e^{-n})]<\infty$, which in turn implies that 
	$S_b\le c\sum_{n=1}^\infty \e^{-n(b-c)}<\infty$
	whenever $b>c$. This proves that every $c<J$ satisfies $c\le I$,
	 whence $J\le I$, and completes the proof of the identity $I=J$.
	 
	 Finally, if $f$ is not necessarily montone, then we observe that
	 $f \le \bar{f}$ where $\bar{f}(t) = \sup_{s\in[0,t]}f(s)$.
	 If $b>I$, then
	 $\int_0^1f(t) t^{-1-b}\,\d t = \infty$
	 and hence $\int_0^1\bar{f}(t) t^{-1-b}\,\d t = \infty$.
	 We may apply the already-proved portion of the lemma to conclude that
	 $\limsup_{t\to0^+} t^{-b}\bar{f}(t)=\infty$, which is another way 
	 to say that $\lim_{t\to 0^+}t^{-b}f(t)=\infty$, and hence
	 $b\ge J$. Since this is true for every $b>I$, it follows that
	 $I\ge J$.
\end{proof}

\begin{small}

\end{small}
\bigskip

\begin{small}
\noindent {\bf Davar Khoshnevisan.}\\
	Department of Mathematics, University of Utah,
	Salt Lake City, UT 84112-0090, USA,
	\texttt{davar@math.utah.edu}\\
	
\noindent {\bf Marta Sanz-Sol\'e.}\\
	Facultat de Matem\`atiques i Inform\`atica, 
	Universitat de Barcelona, Gran Via de les Corts Catalanes, 585, 08007 Barcelona, Spain,
	\texttt{marta.sanz@ub.edu}
\end{small}

\end{document}